\documentclass[a4paper,leqno,12pt,draft]{amsart}
\usepackage{amsmath}
\usepackage{amssymb}
\usepackage{amsfonts}
\usepackage{amsthm}
\usepackage{enumerate}
\usepackage{comment}
\usepackage{cite}
\usepackage[dvips]{graphicx,color}
\usepackage{delarray}
\usepackage{ascmac}
\usepackage{fancybox}
\usepackage{lineno}
\usepackage[hidelinks,draft=false]{hyperref}
\setlength{\textwidth}{14.5cm}
\setlength{\textheight}{23cm}
\setlength{\topmargin}{-1cm}
\setlength{\oddsidemargin}{0.75cm}

\theoremstyle{plain}
\newtheorem{theorem}{Theorem}[section]
\newtheorem{corollary}[theorem]{Corollary}

\newtheorem{proposition}[theorem]{Proposition}
\newtheorem{lemma}[theorem]{Lemma}

\numberwithin{equation}{section}

\numberwithin{equation}{section}

\def\XXint#1#2#3{{\setbox0=\hbox{$#1{#2#3}{\int}$}
\vcenter{\hbox{$#2#3$}}\kern-.5\wd0}}

\begin{document}
\title[Classification of bifurcation diagrams]{Classification of bifurcation diagrams for semilinear elliptic equations in the critical dimension}
\author{Kenta Kumagai}
\address{Department of Mathematics, Tokyo Institute of Technology}
\thanks{This work was supported by JSPS KAKENHI Grant Number 23KJ0949}
\email{kumagai.k.ah@m.titech.ac.jp}
\date{\today}

\begin{abstract}
We are interested in the global bifurcation diagram of radial solutions for the
Gelfand problem with the exponential nonlinearity and a radially symmetric weight $0<a(|x|)\in C^2(\overline{B_1})$ in the unit ball. When the weight is constant, it is known that the bifurcation curve has infinitely many turning points if the dimension $N\le 9$, and it has no turning points if $N\ge 10$.

In this paper, we show that the perturbation of the weight does not affect the bifurcation structure when $N\le 9$. Moreover,
we find specific radial singular solutions with specific weights and study the Morse index of the solutions. As a consequence,
we prove that the perturbation affects the bifurcation structure in the critical dimension $N=10$. Moreover, we give an optimal classification of the bifurcation diagrams in the critical dimension.

\end{abstract}
\keywords{Semilinear elliptic equation,  Bifurcation diagram, Turning points, Singular solution, Stability.}
    \subjclass[2020]{35B32, 35J61, 35J25, 35B35}

\maketitle

\raggedbottom

\section{Introduction}
Let $N\ge 3$ and $B_1\subset \mathbb{R}^N$ be the unit ball. We are interested in the global bifurcation diagram for radial solutions for the semilinear elliptic problem
\begin{equation}
\label{gelfand}
\left\{
\begin{alignedat}{4}
 -\Delta u&=\lambda  a(|x|)e^{u}&\hspace{2mm} &\text{in } B_1,\\
u&>0  & &\text{in } B_1, \\
u&=0  & &\text{on } \partial B_1,
\end{alignedat}
\right.
\end{equation}
where $\lambda>0$ is a parameter and $a:[0,1]\to \mathbb{R}$ satisfies the following
\begin{equation}
a(|x|)\in C^2(\overline{B_1}), \hspace{2mm} a(r)> 0,\hspace{2mm}\text{and}\hspace{2mm} a(0)=1.
\tag{A}
\end{equation}

\subsection{The case of \texorpdfstring{$a=1$}{Lg}}
\label{a1}
In this case, we remark that each solution of \eqref{gelfand} is radially symmetric and $\lVert u \rVert_{L^{\infty}(B_1)}=u(0)$ by the symmetric result of Gidas, Ni, and Nirenberg \cite{Gidas}. Thus we can use ODE techniques and as a result,
we verify that the solution set of \eqref{gelfand} is an unbounded curve described as $(\lambda(\alpha), u(r, \alpha))$ and emanating from $(0,0)$, where $u(r,\alpha)$ is the solution satisfying $\lVert u \rVert_{L^{\infty}(B_1)}=\alpha$. See \cite{korman, Mi2014, Mi2015} for example. We call this set $\{(\lambda(\alpha),\alpha); \alpha>0\}$ the bifurcation curve. A celebrated result of Joseph and Lundgren \cite{JL} states that the bifurcation structure changes depending on $N$. More precisely, they proved that if $N\le 9$, the curve turns infinitely many times around some $\lambda_*$. We call this property of the curve Type I. On the other hand, if $N\ge 10$, the curve can be parametrized by $\lambda$. In addition, $\alpha (\lambda)$ is increasing and it blows up at some $\lambda_*$. We call this property of the curve Type II.
Motivated by their result, a number of attempts have been made in order to determine the bifurcation diagrams for 
various nonlinearities $f>0$. We refer to \cite{Mi2015, Mi2014, Marius, KiWei, Mi2018} for \eqref{gelfand} and \cite{Guowei, Nor, Flore,chend} for related problems. Among them, we note that Miyamoto \cite{Mi2014} provided an example of $f>0$ for which the bifurcation curve of \eqref{gelfand} has at least one but finitely many turning points for a sufficiently large $N\ge 10$. We call this property of the curve Type III.

Along with the studies of the bifurcation diagrams, many studies on the properties of radial singular solutions have been done for various supercritical nonlinearities $f\ge 0$ (see \cite{Mi2014,Merle, Lin, Mi2015, Marius, Mi2018, KiWei, Guowei, Luo, chen,chend,
Mi2020} for details). Here, we say that $(\lambda_{*},U_{*})$ is a radial singular solution of \eqref{gelfand} if $U_{*}(r)$ is a regular solution of \eqref{gelfand} in $(0,1]$ satisfying $U_{*}(r)\to\infty$ as $r\to 0$. In addition to the above studies, Miyamoto and Naito \cite{Mi2023} showed the following properties for a larger class of $f$: there exists the unique radial singular solution $(\lambda_{*}, U_{*})$ such that the bifurcation curve converges to $(\lambda_{*}, U_{*})$. Moreover, they studied the asymptotic behavior of $U_* (r)$ near $r=0$. We point out that the stability of radial singular solutions is deeply connected to the bifurcation structure. More precisely, when $f>0$ is a nondecreasing and convex function in the above class, it follows that the bifurcation diagram is of Type II if and only if the radial singular solution is stable (see \cite{Br, Mi2023} and Section \ref{Hardy sec} for details). Here, we say that a solution $(\lambda,u)$ of \eqref{gelfand} is stable if the linearized operator $-\Delta-\lambda a(|x|) e^{u}$ is nonnegative in the sense that
\begin{align}
\label{stab}
Q_{u}(\xi):=\int_{B_1}|\nabla\xi|^2\,dx-\int_{B_1}\lambda a(|x|) e^{u}\xi^2\,dx\ge 0\hspace{6mm}\text{for all $\xi \in C^\infty_0(B_1)$}\notag.
\end{align}
In particular, when $f(u)=e^u$, we remark that the radial singular solution $(2(N-2), -2\log |x|)$ exists and the solution is stable if and only if $N\ge 10$ by the Hardy inequality. Also from the above facts, we can confirm that the bifurcation structure changes at the dimension $10$. Furthermore, Miyamoto \cite{Mi2014}
focused on the the Morse index $m(U_{*})$ of the radial singular solution $(\lambda_{*},U_{*})$ in the space of radial functions and expected that $m(U_{*})$ is equal to the number of turning points of the bifurcation curve. Here, we explain the definition of $m(U_{*})$ precisely: $m(U_{*})$ is the maximal dimension of a subspace $X\subset H^1_{0,\mathrm{rad}}(B_1)$ such that $Q_{U_{*}}(\xi)<0$ for all $\xi\in X\setminus \{0\}$, where $H^1_{0, \mathrm{rad}}(B_1)$ is the space of radially symmetric functions in $H^1_{0}(B_1)$. We remark that $m(U_{*})=0$ if $U_{*}$ is stable.

\subsection{Weighted case} In this case, in general, we cannot know whether $\lambda$ is parametrized by $\alpha:=\lVert u \rVert_{L^{\infty}(B_1)}$. On the contrary, we prove the following theorem by 
using the specific change of variables noted in Subsection \ref{ababa}.
\begin{theorem}
\label{diagramth}
Assume that $N\ge 3$ and $a(r)$ satisfies (A). Then, the radial solution set is described as
\begin{equation*}
\{\left(\lambda(\beta), u(r, \alpha(\beta))\right); \beta\in\mathbb{R}\}\hspace{4mm}\text{with}\hspace{4mm}\alpha(\beta)=\beta-\log \lambda(\beta),
\end{equation*}
where $\alpha(\beta):=\lVert u \rVert_{L^{\infty}(B_1)}$. Moreover, the branch
$\mathcal{C}:=\{(\lambda(\beta), \alpha(\beta)); \beta\in \mathbb{R}\}$ is an unbounded analytic curve emanating from $(0,0)$ and there exists $0<\lambda^{*}<\infty$ such that $\lambda(\beta)\le\lambda^{*}$ for all $\beta$. We call this curve the bifurcation curve and we say that $(\lambda(\beta),\alpha(\beta))$ is a turning point
if $\lambda(\beta)$ is a local minimum or local maximum.
\end{theorem}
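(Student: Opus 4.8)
The plan is to use the change of variables from Subsection~\ref{ababa} to recast the search for radial solutions of \eqref{gelfand} as an initial value problem carrying a single parameter, and then to read off every assertion of the theorem from the behaviour of that parameter. Given a radial solution with $\alpha=\lVert u\rVert_{L^\infty(B_1)}=u(0)$, I set $v(r)=u(r)-\alpha$ and $\mu=\lambda e^{\alpha}$, so that $v$ solves
\begin{equation*}
-v''-\frac{N-1}{r}v'=\mu\,a(r)\,e^{v},\qquad v(0)=0,\quad v'(0)=0.
\end{equation*}
Conversely, for each $\mu>0$ this initial value problem has a unique solution $v(\,\cdot\,,\mu)$, and $u(r)=v(r,\mu)-v(1,\mu)$ together with $\lambda=\mu e^{v(1,\mu)}$ recovers a radial solution of \eqref{gelfand} with $\alpha=-v(1,\mu)$ and $\log\lambda=\log\mu-\alpha$. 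Writing $\beta=\log\mu$, this sets up a bijection between $\beta\in\mathbb{R}$ and the radial solution set, with $\lambda(\beta)=e^{\beta}e^{v(1,e^{\beta})}$, $\alpha(\beta)=-v(1,e^{\beta})$, and hence $\alpha(\beta)=\beta-\log\lambda(\beta)$, exactly as claimed.

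First I would integrate the equation once to obtain $r^{N-1}v'(r)=-\mu\int_0^r s^{N-1}a(s)e^{v(s)}\,ds<0$, so that $v(\,\cdot\,,\mu)$ is strictly decreasing; in particular $e^{v}\le1$, the nonlinearity stays bounded, the solution exists on all of $[0,1]$, and $u=v+\alpha$ is radially decreasing with $u(0)=\alpha=\lVert u\rVert_{L^\infty}$, which justifies the identification of $\alpha$ above. For analyticity I would show that $\mu\mapsto v(1,\mu)$ is real analytic on $(0,\infty)$. The decisive point is that, having passed to an initial value problem, this follows from the standard theorem on analytic dependence of solutions on a parameter: the vector field $\mu\,a(r)e^{v}$ is entire in $(v,\mu)$ while only continuous in $r$, so complexifying $\mu$ and running a holomorphic Picard iteration in the integral form
\begin{equation*}
v(r)=-\mu\int_0^r t^{1-N}\!\int_0^t s^{N-1}a(s)\,e^{v(s)}\,ds\,dt
\end{equation*}
yields a solution holomorphic in $\mu$ near every real $\mu_0>0$. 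Crucially, no invertibility of a linearization is needed, so analyticity persists through the turning points; and the mere $C^2$ regularity of $a$ in $r$ is irrelevant, since analyticity is asserted only in the parameter. Composing with $\beta\mapsto e^{\beta}$ and the exponential then makes $\lambda(\beta)$ and $\alpha(\beta)$ analytic, so $\mathcal{C}$ is an analytic curve.

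It remains to analyse the two ends $\beta\to\pm\infty$. As $\mu\to0^{+}$ one has $v(\,\cdot\,,\mu)\to0$ uniformly, whence $\alpha(\beta)\to0$ and $\lambda(\beta)=\mu e^{v(1,\mu)}\to0$; thus the curve emanates from $(0,0)$. For $\mu\to\infty$ I would compare with the constant-weight problem: choosing $0<\delta\le a\le M$ on $[0,1]$ and using that the right-hand side is increasing in the weight, a Sturm-type comparison for this initial value problem gives
\begin{equation*}
v_1\!\left(\sqrt{M\mu}\right)\le v(1,\mu)\le v_1\!\left(\sqrt{\delta\mu}\right),
\end{equation*}
where $v_1$ denotes the solution for $a\equiv1$ and the rescaling $v_c(r)=v_1(\sqrt{c}\,r)$ has been used. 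The classical asymptotics of the Gelfand problem, $v_1(\rho)=-2\log\rho+O(1)$ as $\rho\to\infty$, then force $\alpha(\beta)=-v(1,\mu)\to+\infty$ (so $\mathcal{C}$ is unbounded) and keep $\lambda(\beta)=\mu e^{v(1,\mu)}$ bounded, between constants comparable to $2(N-2)/M$ and $2(N-2)/\delta$. Since $\lambda(\beta)$ is continuous on $\mathbb{R}$, tends to $0$ as $\beta\to-\infty$, and stays bounded as $\beta\to+\infty$, it is globally bounded, which produces the required $0<\lambda^{*}=\sup_{\beta}\lambda(\beta)<\infty$.

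The step I expect to be the main obstacle is precisely this large-parameter analysis: transferring the sharp logarithmic rate $v_1(\rho)\sim-2\log\rho$ of the unweighted Gelfand problem to the weighted equation with only $a\in C^2$. Making the comparison rigorous — in particular justifying the Sturm-type inequality for the singular radial initial value problem and controlling the $O(1)$ error uniformly in $\mu$ — is what underlies both the unboundedness of $\mathcal{C}$ and the finiteness of $\lambda^{*}$; the analyticity and the bijection, by contrast, are essentially formal consequences of the change of variables.
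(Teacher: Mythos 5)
Your change of variables, the resulting bijection between $\beta\in\mathbb{R}$ and the radial solution set, the monotonicity argument identifying $\alpha=u(0)$, the analyticity via holomorphic dependence of the Picard iteration on the (complexified) parameter, and the limit $(\lambda,\alpha)\to(0,0)$ as $\beta\to-\infty$ are all correct and coincide, up to the trivial shift $v\mapsto v-\beta$, with what the paper does: the paper normalizes the parameter into the initial value $v(0,\beta)=\beta$ of \eqref{odev} rather than into a coefficient $\mu=e^{\beta}$, but these are the same parametrization and the same one-line analyticity argument.

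The genuine gap is in your treatment of the end $\beta\to+\infty$. The two-sided bound $v_1(\sqrt{M\mu})\le v(1,\mu)\le v_1(\sqrt{\delta\mu})$ is a pointwise ordering of solutions of the initial value problem under an ordering of the weights, and no such ``Sturm-type'' principle is available here: the linearization $-\Delta-a e^{v}$ is not a positive operator in the supercritical regime, and for $3\le N\le 9$ its radial kernel oscillates infinitely often (this is exactly the mechanism behind Type I bifurcation), so the difference of two solutions driven by ordered weights generically changes sign rather than keeping one. Indeed, establishing even restricted comparison results of this flavour is what the whole of Section \ref{separate sec} is devoted to, and there they require structural hypotheses such as $(a/a_h)'\le 0$ and hold only on a subball $B_{r_h}$. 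Fortunately the conclusion you are after does not need any of this. The paper obtains it from the classical theory of the minimal branch recalled in Subsection \ref{applisub}: there exists $\lambda^{*}\in(0,\infty)$ beyond which \eqref{gelfand} admits no weak solution (finiteness of $\lambda^{*}$ follows, e.g., by testing against the first Dirichlet eigenfunction and using $e^{t}\ge et$), which gives $\lambda(\beta)\le\lambda^{*}$ at once; then $\alpha(\beta)=\beta-\log\lambda(\beta)\to+\infty$ as $\beta\to+\infty$ is automatic, so the curve is unbounded without any asymptotics for $v(1,\mu)$. You should replace the comparison step by this argument; as written, the step you yourself flag as the main obstacle is not merely hard but rests on a false principle in the dimensions $3\le N\le 9$ covered by the theorem.
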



This change of variables enables us to apply the various methods established for the case where $a=1$. As a result, we prove
\begin{theorem}
    \label{singularth}
    Assume that $N\ge 3$ and $a(r)$ satisfies (A). Then, there exists the unique singular radial solution $(\lambda_{*}, U_{*})$ of \eqref{gelfand} in the sense that if $(\lambda, U)$ is a singular radial solution of \eqref{gelfand}, then $\lambda=\lambda_{*}$, $U=U_{*}$. In addition, we have
    \begin{equation}
    \label{convergence schme}
        \lambda(\beta)\to \lambda_{*} \hspace{4mm} \text{and}\hspace{4mm} u(r,\alpha(\beta))\to U_{*} \hspace{2mm} \text{in $C^2_{\mathrm{loc}}(0,1]$\hspace{4mm} as $\beta\to \infty$}.
    \end{equation}
    In particular, it follows that $\alpha(\beta)\to \infty$ if and only if $\beta\to \infty$. Moreover, there exists $\hat{\lambda}>0$ such that the bifurcation curve $\mathcal{C}$ is parametrized by $\lambda$ if $\lambda<\hat{\lambda}$.
    Furthermore, $U_*\in H^1_{0}(B_1)$ and it satisfies 
    \begin{equation}
    \label{asymptotic behavior}
        U_{*}(r)\to - 2\log r + \log2(N-2)-\log \lambda_{*} \hspace{4mm}\text{as $r\to 0$}.
    \end{equation}
\end{theorem}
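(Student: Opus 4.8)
The plan is to carry everything through the change of variables $w=u+\log\lambda$ behind Theorem \ref{diagramth}, under which \eqref{gelfand} becomes the $\lambda$-free equation $-\Delta w=a(|x|)e^{w}$ with the constant boundary value $w\equiv\log\lambda$ on $\partial B_1$. A radial solution is then the shooting solution of
\[
w''+\frac{N-1}{r}\,w'+a(r)\,e^{w}=0,\qquad w(0)=\beta,\quad w'(0)=0,
\]
with $\lambda(\beta)=e^{w(1;\beta)}$ and $\alpha(\beta)=\beta-\log\lambda(\beta)$. In these variables a singular radial solution of \eqref{gelfand} is exactly a solution $W_*$ of the same equation on $(0,1]$ with $W_*(r)\to\infty$ as $r\to0$, after which one sets $\lambda_*:=e^{W_*(1)}$ and $U_*:=W_*-\log\lambda_*$, so that $U_*(1)=0$ automatically. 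Thus the whole theorem reduces to constructing and analyzing $W_*$.

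Next I would apply the Emden--Fowler transformation $t=-\log r$, $v(t)=w(e^{-t})-2t$, which turns the equation into the asymptotically autonomous system
\[
v_{tt}-(N-2)v_{t}-2(N-2)+a(e^{-t})\,e^{v}=0,\qquad t\in[0,\infty),
\]
whose coefficient satisfies $a(e^{-t})\to a(0)=1$ at an exponential rate because $a\in C^{2}$ with $a(0)=1$. Regular solutions have $v(t)\to-\infty$, whereas the singular solution is precisely the trajectory with $v(t)\to v_*:=\log 2(N-2)$ as $t\to\infty$. Linearizing the limiting autonomous equation at $v_*$ gives $\phi_{tt}-(N-2)\phi_{t}+2(N-2)\phi=0$, with roots $\mu_{\pm}=\tfrac12\big((N-2)\pm\sqrt{(N-2)(N-10)}\,\big)$, both of positive real part, so $v_*$ is a source of the limiting flow (and the sign of the discriminant at $N=10$ is what governs the eventual Type I/II dichotomy).

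The existence and uniqueness of $W_*$ is the heart of the argument and the step I expect to be the main obstacle. Since $v_*$ is a repeller and the non-autonomous forcing decays like $e^{-t}$, I would set $\phi=v-v_*$, write the variation-of-constants formulation built from the two growing homogeneous modes integrated backward from $+\infty$, and run a contraction on a space of functions decaying as $t\to\infty$; imposing decay kills both growing modes, which pins down a unique $\phi$ and yields simultaneously the existence and uniqueness of the local singular solution together with the decay rate of $\phi$. Translating back produces the asymptotics \eqref{asymptotic behavior}, and since $U_*\sim-2\log r$ gives $|\nabla U_*|\sim 2/r$ and $\int_{B_1}|\nabla U_*|^{2}\sim\int_{0}r^{N-3}\,dr<\infty$ for $N\ge3$, we obtain $U_*\in H^1_0(B_1)$. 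Uniqueness also has a self-contained form I would record: if $v_1,v_2$ both converge to $v_*$, then $\psi=v_1-v_2$ solves a linear equation whose coefficients tend to those of the source, so every nontrivial solution grows as $t\to\infty$ and $\psi\to0$ forces $\psi\equiv0$.

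Finally, for the convergence of the bifurcation curve I would use the bound $\lambda(\beta)\le\lambda^{*}$ from Theorem \ref{diagramth}: with $\lambda$ bounded and $\alpha(\beta)=\|u\|_{\infty}\to\infty$ along the unbounded branch, any concentration can only occur at $r=0$, so standard elliptic a priori estimates give uniform $C^{2}$ bounds on each annulus $[r_0,1]$. Arzela--Ascoli and ODE regularity then yield a $C^2_{\mathrm{loc}}(0,1]$ subsequential limit which, having unbounded central value, is a singular solution and hence equals $U_*$ by uniqueness; this upgrades to the full convergence \eqref{convergence schme}, with $\lambda(\beta)=e^{w(1;\beta)}\to e^{W_*(1)}=\lambda_*$. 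Because $\log\lambda(\beta)\to\log\lambda_*$ is then finite, $\alpha(\beta)=\beta-\log\lambda(\beta)\to\infty$ if and only if $\beta\to\infty$ (the opposite end $\beta\to-\infty$ giving $\alpha\to0$). For the last assertion, near the origin of $\mathcal{C}$ the solutions are small and stable, so the implicit function theorem gives $\lambda'(\beta)>0$ and strict monotonicity of $\lambda(\beta)$ there; since $\lambda(\beta)\to0$ at that end, there is $\hat\lambda>0$ for which $\mathcal{C}$ is a graph over $\lambda$ on $\{\lambda<\hat\lambda\}$.
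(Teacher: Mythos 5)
Your construction of the singular solution by a contraction argument at the source $v_*=\log 2(N-2)$ of the Emden--Fowler system is a genuinely different route from the paper, which instead obtains existence as a $C^2_{\mathrm{loc}}$-limit along the branch and quotes a uniqueness theorem of Bae for \eqref{singulareq}; your fixed-point approach is viable and more self-contained, at the price of having to prove separately that \emph{every} radial singular solution of \eqref{gelfand} satisfies $v(t)\to v_*$. You assert this (``the singular solution is precisely the trajectory with $v(t)\to v_*$'') but do not prove it, and it is not automatic: a priori $v$ could oscillate or drift. In the paper this is Proposition \ref{asymptotic prop}, which requires the a priori bound $u\le -2\log r+C$ of Lemma \ref{apriorilemma} together with an asymptotically autonomous ODE lemma (Lemma \ref{ordlem}). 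Without it, your uniqueness argument only covers singular solutions already known to have the asymptotics \eqref{asymptotic behavior}, so the ``uniqueness'' clause of the theorem is not yet established.

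The more serious gap is in the convergence step. From uniform $C^2$ bounds on annuli and Arzel\`a--Ascoli you extract a limit $U_*$ on $(0,1]$ and declare it singular because the approximating solutions have ``unbounded central value''. This is a non sequitur: $u(0,\alpha(\beta_k))\to\infty$ does not prevent the limit, which is taken only in $C^2_{\mathrm{loc}}(0,1]$, from being a bounded regular solution, the blow-up of the central values being invisible on every annulus. The paper closes this with the Pohozaev-type Proposition \ref{pohopro}(ii), which gives a lower bound $u>\rho$ on a ball of radius $r_\rho$ independent of $k$, forcing $U_*\ge\rho$ near the origin for every $\rho$. The same proposition is what rules out $\lambda(\beta_k)\to 0$ along the sequence (which your ``standard a priori estimates on annuli'' implicitly need, since the constants in Lemma \ref{apriorilemma} degenerate as the lower bound on $\lambda$ tends to $0$), and it is also what makes the final assertion about $\hat{\lambda}$ global: monotonicity of $\lambda(\beta)$ near the $\beta\to-\infty$ end of $\mathcal{C}$ only makes the curve \emph{locally} a graph over $\lambda$; to parametrize $\mathcal{C}$ by $\lambda$ for all $\lambda<\hat{\lambda}$ one must exclude solutions with large $\beta$ and small $\lambda$, i.e.\ precisely the implication ``small $\lambda$ forces small $\alpha$'' that Proposition \ref{pohopro} provides. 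You need to supply this Pohozaev-type estimate (or an equivalent quantitative lower bound) before the limiting and parametrization arguments can be completed.
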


The following theorem indicates that the perturbation of $a$ does not affect the bifurcation structure if $N\le 9$.

 \begin{theorem}
\label{39th}
    Let $3\le N\le 9$ and we assume that $a(r)$ satisfies $(A)$. Then, the bifurcation diagram is of Type I. 
\end{theorem}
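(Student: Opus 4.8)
The plan is to reduce the weighted problem to the classical constant-weight case via the change of variables introduced in Subsection \ref{ababa}, and then to transfer the Joseph--Lundgren Type I conclusion through the convergence established in Theorem \ref{singularth}. Concretely, I would first fix the radial ODE formulation. Writing $u=u(r)$, equation \eqref{gelfand} becomes
\begin{equation*}
u''+\frac{N-1}{r}u'+\lambda a(r)e^{u}=0,\qquad u'(0)=0,\ u(1)=0,
\end{equation*}
and by Theorem \ref{diagramth} the solution set is the analytic curve $\mathcal{C}=\{(\lambda(\beta),\alpha(\beta));\beta\in\mathbb{R}\}$. To prove Type I, I must show that $\lambda(\beta)$ oscillates infinitely often (equivalently, that $\mathcal{C}$ has infinitely many turning points accumulating at the limit value) as $\beta\to\infty$. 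The natural mechanism is the one behind the classical case: the linearization around the singular solution $U_*$, governed near $r=0$ by an Euler-type operator whose indicial roots are complex precisely when $N\le 9$, forces the regular solutions to spiral around $(\lambda_*,\infty)$.

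Second, I would carry out the spiraling argument at the level of the singular solution. By Theorem \ref{singularth}, $u(r,\alpha(\beta))\to U_*$ in $C^2_{\mathrm{loc}}(0,1]$ and $\lambda(\beta)\to\lambda_*$ as $\beta\to\infty$, with the sharp asymptotics \eqref{asymptotic behavior}, namely $U_*(r)=-2\log r+\log 2(N-2)-\log\lambda_*+o(1)$. Thus near the origin the potential $\lambda_* a(r)e^{U_*}$ behaves like $2(N-2)/r^2$, so the linearized radial operator $-\Delta-\lambda_* a(r)e^{U_*}$ is an asymptotically Euler operator with the \emph{same} leading coefficient as in the case $a=1$; the weight $a$ only contributes lower-order corrections because $a(0)=1$ and $a\in C^2$. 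The indicial equation for $w''+\frac{N-1}{r}w'+\frac{2(N-2)}{r^2}w=0$ has roots with nonzero imaginary part exactly when $N\le 9$. Hence solutions of the linearized equation oscillate infinitely often as $r\to 0^+$, and by a perturbation/Sturm comparison argument this oscillation persists for the true potential $\lambda(\beta)a(r)e^{u(r,\alpha(\beta))}$ once $\beta$ is large enough and $r$ is small enough.

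Third, I would convert this oscillation into turning points of $\mathcal{C}$. The standard route is to relate the sign of $\lambda'(\beta)$ (or of $d\lambda/d\alpha$) to a zero count of the variation $\partial u/\partial\beta$, as in the Crandall--Rabinowitz / time-map analysis used for $a=1$. As $\beta\to\infty$ the function $u(\cdot,\alpha(\beta))$ converges to the infinitely oscillating singular profile, so the number of sign changes of the associated eigenfunction tends to infinity; each additional sign change corresponds to an additional turning point, giving infinitely many turning points and establishing Type I. Equivalently, one shows $\liminf_{\beta\to\infty}\lambda(\beta)<\limsup_{\beta\to\infty}\lambda(\beta)$ fails to hold as a monotone limit, forcing $\lambda(\beta)$ to cross its limit $\lambda_*$ infinitely often.

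The main obstacle is making the perturbation argument rigorous: in the classical $a=1$ case the spiraling follows from an explicit autonomous (Emden--Fowler) reduction, but here $a(r)$ destroys exact autonomy, so I cannot quote Joseph--Lundgren verbatim. The crux is to show that the $C^2$-smallness of the perturbation $a(r)-1$ near $r=0$, together with the fact that the \emph{leading} singular coefficient $2(N-2)/r^2$ is unchanged, suffices to preserve the complex indicial roots and hence the infinite oscillation. I expect to control this via a Sturmian comparison between the true linearized equation and the exact Euler equation, showing the perturbed zeros interlace with the unperturbed ones near the origin; this is where the hypothesis $a\in C^2(\overline{B_1})$ with $a(0)=1$ is used decisively, and where the dimensional threshold $N\le 9$ enters through the discriminant of the indicial polynomial.
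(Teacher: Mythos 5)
Your overall intuition --- blow up near the origin, use the fact that the leading $2(N-2)/r^2$ coefficient is unchanged because $a(0)=1$, and let the $N\le 9$ oscillation do the work --- matches the spirit of the paper, and your Step 1 is exactly Theorem \ref{diagramth}. But there is a genuine gap at the decisive final step. You propose to convert oscillation of the \emph{linearized} operator around $U_*$ into infinitely many turning points via ``the standard route'' of relating $\lambda'(\beta)$ to a zero count of $\partial u/\partial\beta$. No such theorem is available: the statement that the Morse index of the singular solution equals (or even bounds from below) the number of turning points is precisely the conjecture of Miyamoto \cite{Mi2014} quoted in the introduction, not an established tool. Crandall--Rabinowitz theory gives local information at a single degenerate point; it does not manufacture infinitely many turning points from an infinite Morse index. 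Your closing remark about $\liminf\lambda(\beta)<\limsup\lambda(\beta)$ also cannot work as stated, since Theorem \ref{singularth} gives $\lambda(\beta)\to\lambda_*$, so both limits coincide; what must be shown is that $\lambda(\beta)-\lambda_*$ \emph{changes sign} infinitely often.

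The paper closes this gap by working with a different object: the intersection number $\mathcal{Z}_{[0,1]}[v(r,\beta)-V_*(r)]$ between the regular solution and the singular solution (not zeros of the linearization). Proposition \ref{inter} shows, via the rescaling $\hat v_\beta(r)=v(e^{-\beta/2}r,\beta)-\beta$ under which $\hat a_\beta(r)=a(e^{-\beta/2}r)\to 1$, that the problem converges to the unweighted one, for which $\mathcal{Z}_{[0,\infty)}[v_0(r,0)-(-2\log r+\log 2(N-2))]=\infty$ when $N\le 9$; hence $\mathcal{Z}_{[0,1]}[v(\cdot,\beta)-V_*]\to\infty$ (Proposition \ref{interprop}). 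The key identity is then $v(1,\beta)-V_*(1)=\log\lambda(\beta)-\log\lambda_*$: since zeros of $v(\cdot,\beta)-V_*$ are simple and move continuously in $\beta$, each new zero must enter $[0,1]$ through $r=1$, and each such crossing flips the sign of $\log\lambda(\beta)-\log\lambda_*$. That is the mechanism producing infinitely many turning points, and it is absent from your proposal. If you want to salvage your route, you would need to replace your Sturm comparison for the linearized equation by an oscillation statement for the \emph{difference} $v(\cdot,\beta)-V_*$ (uniformly in $\beta$ large), and then supply the zero-tracking argument at $r=1$; at that point you would essentially have reconstructed the paper's proof.
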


\subsection{Main results} Motivated from the above fact, we arrive at a fundamental question: does not the perturbation of $a(r)$ make a change to the bifurcation structure in the critical dimension $N=10$? 
In order to answer this question, we find specific radial singular solutions for specific weighted terms. By studying the stability and the Morse index of the solutions, we show that, for $N=10$, the bifurcation curve exhibits different types depending on the choice of weight $a(r)$.
\begin{theorem}
\label{singularthm}
Let $3\le N\le 10$ and $h>-2(N-2)$. We define 
\begin{equation}
   a_h(r):= \left(1+\frac{h}{2(N-2)}r^2\right)e^{\frac{h}{2N}r^2}\hspace{4mm} and \hspace{4mm} \lambda_{h}=2(N-2)e^{-\frac{h}{2N}}.\notag
\end{equation}
Then, 
\begin{equation}
     U_h(x):=\frac{h}{2N} -2 \log|x|-\frac{h}{2N}|x|^2\notag
\end{equation}
is a radial singular solution of (\ref{gelfand}) for $a=a_h$ and $\lambda=\lambda_{h}$.
Moreover, 
\begin{itemize}
    \item $m(U_h)=\infty$ if $N\le 9$,
    \item $m(U_h)=0$ and $U_h$ is stable if $N=10$ and $h\le H$,
    \item  $1\le m(U_h)<\infty$ if $N=10$ and $h>H$,
\end{itemize}
where $H$ is the first eigenvalue of the Laplacian in the unit ball in $N=2$.

As a consequence, when $N=10$ and $a=a_h$, the bifurcation diagram is of Type II if and only if $h\le H$. In particular, the bifurcation structure changes at $a_H$ when $N=10$.   
\end{theorem}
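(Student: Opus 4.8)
My plan is to treat the four assertions in turn, with the Morse-index analysis as the core.

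\emph{Step 1 (the singular solution).} First I would check that $a_h$ satisfies hypothesis (A): clearly $a_h\in C^2$ and $a_h(0)=1$, and $a_h(r)>0$ on $[0,1]$ precisely because $1+\frac{h}{2(N-2)}r^2>0$ there, which is guaranteed by $h>-2(N-2)$. Then a routine computation of the radial Laplacian gives $-\Delta U_h=\frac{2(N-2)}{r^2}+h$, while the exponential and Gaussian factors in $a_h$ together with the normalising constant in $\lambda_h$ conspire so that $\lambda_h a_h(r)e^{U_h}=\frac{2(N-2)}{r^2}+h$ as well; together with $U_h(1)=0$ and $U_h(r)\to\infty$ this shows $U_h$ is a singular radial solution. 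As a consistency check, letting $r\to 0$ recovers the asymptotics \eqref{asymptotic behavior} of Theorem \ref{singularth} with $\lambda_*=\lambda_h$.

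\emph{Step 2 (reduction to a two-dimensional problem).} The key observation is that the potential of the quadratic form is exactly the Hardy potential plus a constant: $Q_{U_h}(\xi)=\int_{B_1}|\nabla\xi|^2\,dx-\int_{B_1}\big(\tfrac{2(N-2)}{r^2}+h\big)\xi^2\,dx$. Restricting to radial $\xi$ and substituting $\xi(r)=r^{-(N-2)/2}w(r)$, I would show, after an integration by parts whose boundary term vanishes for $\xi\in C^\infty_0(B_1\setminus\{0\})$, that up to a positive constant
\begin{equation*}
Q_{U_h}(\xi)=\int_{D}|\nabla w|^2\,dx-\kappa_N\int_{D}\frac{w^2}{|x|^2}\,dx-h\int_{D}w^2\,dx,\qquad \kappa_N:=\frac{(N-2)(10-N)}{4},
\end{equation*}
where $D\subset\mathbb{R}^2$ is the unit disk and $w$ is radial. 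Thus the $N$-dimensional radial problem for $U_h$ becomes a two-dimensional radial problem with an effective inverse-square potential whose strength $\kappa_N$ vanishes exactly when $N=10$.

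\emph{Step 3 (the Morse index).} Since the sharp Hardy constant in $\mathbb{R}^2$ is $0$, for $N\le 9$ we have $\kappa_N>0$ and the operator $-\Delta-\kappa_N|x|^{-2}$ on $D$ is unbounded below in the radial class: the indicial roots of $w''+\rho^{-1}w'+\kappa_N\rho^{-2}w=0$ are $\rho^{\pm i\sqrt{\kappa_N}}$, which oscillate, so placing mutually disjoint, logarithmically scaled test functions in annuli accumulating at the origin (where the bounded term $-hw^2$ is negligible) produces an infinite-dimensional negative subspace, giving $m(U_h)=\infty$. For $N=10$ we have $\kappa_N=0$, so $Q_{U_h}(\xi)$ is a positive multiple of the form of $-\Delta-h$ on radial functions of $D$, whose negative directions are spanned by the radial Dirichlet eigenfunctions of $-\Delta$ on $D$ with eigenvalue below $h$. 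Writing $H=j_{0,1}^2$ for the first Dirichlet eigenvalue of $-\Delta$ on $D$ (which is radial), we get $Q_{U_h}\ge 0$, i.e.\ $m(U_h)=0$ and $U_h$ stable, iff $h\le H$; and for $h>H$ the eigenvalues $j_{0,k}^2\uparrow\infty$ give $1\le m(U_h)=\#\{k:j_{0,k}^2<h\}<\infty$.

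\emph{Step 4 (bifurcation type) and the main obstacle.} To pass from the radial Morse index to full stability I would decompose an arbitrary $\xi\in H^1_0(B_1)$ into spherical harmonics; each nonradial mode contributes an extra nonnegative term $\mu_k r^{-2}\xi_k^2$ with $\mu_k\ge 0$, so $Q_{U_h}\ge 0$ on $H^1_0(B_1)$ iff it is nonnegative on radial functions, i.e.\ iff $m(U_h)=0$. By Theorem \ref{singularth} the pair $(\lambda_h,U_h)$ is the unique singular radial solution, hence the one to which the bifurcation curve converges; since $f(u)=e^u$ is nondecreasing and convex, the equivalence quoted in the introduction (Type II $\iff$ the singular solution is stable) then yields, for $N=10$, Type II $\iff h\le H$, and the structure changes at $a_H$. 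I expect the main difficulty to be the functional-analytic bookkeeping at the origin in Steps 2--3: the substitution is only formal on $C^\infty_0(B_1\setminus\{0\})$, and to identify the threshold with the genuine two-dimensional Dirichlet eigenvalue one must know that removing the origin leaves $H^1_{0,\mathrm{rad}}(D)$ unchanged (the point has zero $2$-capacity), so that the negative subspaces of the two forms have equal dimension; carrying this density and closure argument through rigorously---rather than the arithmetic of Steps 1 and 3---is the crux.
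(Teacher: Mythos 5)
Your proposal is correct, and its core strategy coincides with the paper's: direct verification of $U_h$ with $a_he^{U_h+\log\lambda_h}=2(N-2)|x|^{-2}+h$, disjoint oscillating annular test functions concentrating at the origin for $N\le 9$, comparison with the radial Dirichlet spectrum of the two-dimensional disk for $N=10$ (the paper's injective map $\Lambda\xi=|x|^{\frac{2-N}{2}}\xi$ between negative subspaces is exactly your substitution, and it gives $m(U_h)<\infty$ in the same way), and the stable-singular-solution criterion (Proposition \ref{brezis V} together with Theorem \ref{singularth}) for the Type II equivalence. The differences are in packaging rather than substance. You make the reduction to the disk, with effective Hardy constant $\kappa_N=\frac{(N-2)(10-N)}{4}$, the organizing principle for all cases; the paper instead keeps the $N\le 9$ case in $N$ dimensions with the explicit functions $r^{\frac{2-N}{2}}\sin(\frac{\varepsilon}{2}\log r)\chi_{[r_{j+1},r_j]}$, and for $N=10$ it invokes the improved Hardy inequality (Proposition \ref{improved hardy}) for the direction $h\le H\Rightarrow$ stable and proves a separate optimality statement (Proposition \ref{improvedhardy2}) for the converse, using the concrete logarithmic cutoff $\phi_n=\min\{n/(1-\log|x|),1\}$ applied to $\varphi|x|^{\frac{2-N}{2}}$ in place of your zero-$2$-capacity density argument. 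The two devices are interchangeable for the present theorem, and you correctly identify this cutoff/density step as the crux, since $\varphi|x|^{\frac{2-N}{2}}\notin H^1_0(B_1)$; note, however, that the paper's version is stated with a nonconstant weight $\varepsilon(|x|)$ because it is reused in Step 1 of the proof of Theorem \ref{Mainthm}(ii), which a bare capacity argument would not deliver. You reduce stability to radial stability by spherical harmonics, the paper by symmetric rearrangement (Lemma \ref{apenlem1}); both are fine. Finally, your exact count $m(U_h)=\#\{k:j_{0,k}^2<h\}$ is a genuine (and correct, modulo the density step) strengthening of the stated $1\le m(U_h)<\infty$, which the paper does not claim.
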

We remark that the exponent $H$ arises from the best constant of the 
improved Hardy inequality \cite[Theorem 4.1]{Br} and this inequality plays a key role in studying the stability/instability of the singular solutions. In addition, thanks to Theorem \ref{singularthm} and \cite[Conjecture 1.4]{Mi2014}, we can expect that the bifurcation diagram is of Type III for $a=a_h$ if $N=10$ and $h>H$. In the following theorem, we show that this expectation is true. Moreover, we give an optimal classification for the bifurcation diagrams in the critical dimension.
\begin{theorem}
    \label{Mainthm}
Let $N=10$ and we assume that $a(r)$ satisfies $(A)$. Then, the bifurcation diagram is of:
\begin{itemize}
\item[(i)] Type II if $(a/a_H)'\le 0$ in $(0,1]$,
\item[(ii)] Type III if $(a/a_H)'>0$ in $(0,1]$,
\end{itemize}
where $H$ and $a_H$ are those in Theorem \ref{singularthm}.
In particular, when $N=10$ and $a=a_h$, the bifurcation diagram is of Type II if $h\le H$ and of Type III if $h>H$. 
\end{theorem}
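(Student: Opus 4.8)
The plan is to reduce the classification to the computation of the radial Morse index $m(U_*)$ of the singular solution produced by Theorem~\ref{singularth}, and then to evaluate $m(U_*)$ through a potential comparison with the model weight $a_H$. First I would invoke the stability--bifurcation correspondence for the increasing convex nonlinearity $f(u)=e^u$: by Theorem~\ref{singularth} the curve $\mathcal C$ converges to $(\lambda_*,U_*)$ as $\beta\to\infty$, and, as in \cite{Br,Mi2023}, $\mathcal C$ is monotone in $\lambda$ near the singular end exactly when $U_*$ is stable, so Type~II holds iff $m(U_*)=0$. When $1\le m(U_*)<\infty$, the branch must leave the stable regime but can cross a turning point only finitely often; combined with the analyticity of $\mathcal C$ (Theorem~\ref{diagramth}) and the count relating the sign changes of $\lambda'(\beta)$ to the Morse index along the branch (the expectation of \cite[Conjecture~1.4]{Mi2014}), this yields Type~III. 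Hence it suffices to prove
\begin{itemize}
\item[(i)] $(a/a_H)'\le 0 \Rightarrow m(U_*)=0$,
\item[(ii)] $(a/a_H)'>0 \Rightarrow 1\le m(U_*)<\infty$.
\end{itemize}

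Next, using the change of variables of Subsection~\ref{ababa}, I would set $v_*:=U_*+\log\lambda_*$, so that $-\Delta v_*=a\,e^{v_*}$ with the universal profile $v_*(r)\to-2\log r+\log2(N-2)$ as $r\to0$, and the linearized potential becomes $V:=\lambda_*a\,e^{U_*}=a\,e^{v_*}$. Since $N=10$, one has $2(N-2)=16=(N-2)^2/4$, the critical Hardy constant, and the improved Hardy inequality \cite[Theorem~4.1]{Br} reads $\int_{B_1}|\nabla\xi|^2\,dx-16\int_{B_1}\xi^2/|x|^2\,dx\ge H\int_{B_1}\xi^2\,dx$ with optimal constant $H$. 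Writing $V_H:=\tfrac{2(N-2)}{|x|^2}+H$, which is exactly the linearized potential of the explicit solution $U_H$ of Theorem~\ref{singularthm}, I would reduce everything to a pointwise comparison of $V$ with $V_H$: if $V\le V_H$ then $Q_{U_*}(\xi)\ge\int_{B_1}(V_H-V)\xi^2\,dx\ge0$ after applying improved Hardy, giving $m(U_*)=0$; if instead $V\ge V_H$ with strict inequality on a set of positive measure, testing $Q_{U_*}$ against a near-optimizer of the improved Hardy quotient yields $Q_{U_*}<0$, so $m(U_*)\ge1$. Finiteness of $m(U_*)$ follows because $V-\tfrac{2(N-2)}{|x|^2}$ is bounded on $(0,1]$ (using that $a\in C^2$, whence $a'(0)=0$), so $-\Delta-V$ is a bounded perturbation of $-\Delta-16/|x|^2$, whose spectrum is discrete and bounded below by $H>0$.

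The heart of the proof, and the step I expect to be the main obstacle, is the comparison between $V$ and $V_H$ governed by the sign of $(a/a_H)'$. Since $a(0)=a_H(0)=1$, the condition $(a/a_H)'\le0$ forces $a\le a_H$ on $(0,1]$, so $v_H:=U_H+\log\lambda_H$ is a supersolution of $-\Delta v=a\,e^{v}$ sharing the singular profile of $v_*$; from $v_*\le v_H$ I would deduce $V=a\,e^{v_*}\le a_H\,e^{v_H}=V_H$, and symmetrically $V\ge V_H$ when $(a/a_H)'>0$. The delicate point is that at $N=10$ the operator $-\Delta-V_H$ has bottom eigenvalue exactly $0$, so the comparison principle needed to pass from $a\le a_H$ to $v_*\le v_H$ is critical and the naive maximum principle is borderline. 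To run it I would pass to the Emden--Fowler variable $t=-\log r$, $z=v-2t$, under which the equation becomes $z''-(N-2)z'-2(N-2)+A(t)e^{z}=0$ with $A(t)=a(e^{-t})$, and the hypothesis $(a/a_H)'\le0$ translates into $A'/A\ge A_H'/A_H$; the required bound $r^2V=A\,e^{z}\le A_H\,e^{z_H}=r^2V_H$ would then follow from a monotone comparison of the two trajectories connecting to the common degenerate equilibrium $z\equiv\log2(N-2)$ at $t=\infty$. Controlling this comparison across the degenerate (double-root) equilibrium is the technical core.

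Finally, assembling the pieces: case~(i) gives $V\le V_H$, hence $m(U_*)=0$ and Type~II by the reduction above; case~(ii) gives $V\ge V_H$ with strict inequality (as $a/a_H$ is strictly increasing from the value $1$), hence $1\le m(U_*)<\infty$ and Type~III. The stated specialization to $a=a_h$ is then immediate, since $(a_h/a_H)'\le0$ iff $h\le H$, and Theorem~\ref{singularthm} already records the corresponding values of $m(U_h)$.
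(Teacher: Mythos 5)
Your reduction of Type~II to the stability of $U_*$ is essentially the paper's route (Proposition~\ref{brezis V} plus the improved Hardy inequality), but the Type~III half of your argument has a fatal gap: you pass from ``$1\le m(U_*)<\infty$'' to ``finitely many turning points'' by invoking ``the count relating the sign changes of $\lambda'(\beta)$ to the Morse index,'' i.e.\ \cite[Conjecture~1.4]{Mi2014}. That statement is a conjecture, not a theorem; the whole point of Theorem~\ref{Mainthm}(ii) is to \emph{verify} its prediction in this setting, so it cannot be used as an ingredient. The paper instead proves Type~III directly in three steps: (a) the extremal solution $u^*$ is bounded, because otherwise $U_*$ would be the (stable) extremal solution while Proposition~\ref{comparison1} and the sharpness statement Proposition~\ref{improvedhardy2} force $U_*$ to be unstable; (b) the curve turns at $\lambda=\lambda^*$, shown by a Green's identity computation giving $\lambda'(\beta_0)=0$ and $\lambda''(\beta_0)<0$; and (c) Proposition~\ref{mainprop}: for all sufficiently large $\beta$ one has $\lambda'(\beta)\neq 0$, proved by a compactness/blow-up argument in which the normalized derivative $|x|^{(N-2)/2}v_k'$ converges to an eigenfunction of a two-dimensional problem, combined with the sign of $\frac{d}{dr}w(1)$ and a contradiction between the functionals $\mathcal A_k$ and $\mathcal B_k$. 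Step (c) is the technical heart of the Type~III proof and has no counterpart in your proposal; without it, analyticity alone does not exclude infinitely many turning points accumulating at the singular end.

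Two further points need repair. First, your comparison $V\le V_H$ (resp.\ $V\ge V_H$) is not obtained in the paper from $a\le a_H$ via a borderline maximum principle between $v_*$ and $v_H$; the full derivative hypothesis $(a/a_H)'\le 0$ enters essentially, through the term $\nabla(\log a_H-\log a)\cdot\nabla w_2$ in the Green's identity of Propositions~\ref{comparison3}--\ref{comparison1}, where the auxiliary function $w_2$ is built from the \emph{stable branch} of a rescaled problem whose Type~II structure is itself guaranteed by the improved Hardy inequality. Your Emden--Fowler trajectory comparison across the degenerate equilibrium is exactly the step you flag as unresolved, and it is where the argument would break. Second, for instability in case~(ii), ``$V\ge V_H$ with strict inequality on a set of positive measure'' is not sufficient: since the optimizer of the improved Hardy inequality is not attained in $H^1_0(B_1)$, one needs the quantitative lower bound $V\ge \frac{(N-2)^2}{4|x|^2}+H+\varepsilon(|x|)$ with $\varepsilon$ nondecreasing and positive on an annulus near the boundary (Proposition~\ref{comparison1}), which is precisely what the test functions of Proposition~\ref{improvedhardy2} can detect. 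Likewise, finiteness of the Morse index requires the upper bound $ae^{V_*}\le \frac{(N-2)^2}{4|x|^2}+C$, which does not follow from the asymptotics $r^2ae^{V_*}\to 2(N-2)$ alone but from the comparison with $V_H+\log a_H$.
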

We recall that the classical Hardy inequality controls the bifurcation structure when $a=1$. On the other hand, Theorem \ref{Mainthm} tells us that the improved Hardy inequality controls it when the weight $a$ is general and $N=10$. Therefore, our results state not only the important difference for the bifurcation structure between the case of $a=1$ and the weighted case, but also the reason why the difference happens. We also remark that it is a challenging problem to show that the bifurcation diagram is of Type III. In addition, to the best of the author's knowledge, the bifurcation of type III was  only confirmed in the case where $a=1$ and a cleverly chosen $f$. We not only provide many examples of Type III bifurcation but also give a new method to prove that the bifurcation diagram is of Type III. Finally, we remark that we can prove the change of the bifurcation structure when $N\ge 11$ and a similar classification holds when $N=11$ or $a=a_h$ by using similar methods.

\subsection{Application: A change of the regularity property of extremal solutions.} 
\label{applisub}
In this subsection, we focus on the minimal branch of the bifurcation curve. It is well known \cite{Ali,B,Br,Dup} that the set of minimal solutions is a continuous curve emanating from $(0,0)$ and 
there exists $\lambda^* \in(0,\infty)$ such that
\begin{itemize}
\item For $0<\lambda<\lambda^*$, there exists a minimal classical solution $u_\lambda\in C^2(\overline{B_1})$. In particular, $u_\lambda$ is radially symmetric, stable, and $u_\lambda<u_{\lambda'}$ for $\lambda<\lambda'$.
\item For $\lambda=\lambda^*$, we define $u^*:=\lim_{\lambda \uparrow \lambda^* } u_\lambda$. Then, $u^*$ is called the extremal solution, which is a
radially symmetric stable weak solution in the sense that $u^*\in L^1(B_1)$, $a(|x|)e^{u^*}\mathrm{dist}(\cdot,\partial \Omega)\in L^1(B_1)$, and 
\begin{equation}
-\int_{B_1}u^*\Delta \xi \,dx = \int_{B_1}\lambda^*a(|x|)e^{u^*}\xi \,dx\hspace{8mm} \text{for all}\hspace{2mm}\xi\in C^2_0(\overline{B_1}).\notag
\end{equation}
\item For $\lambda>\lambda^*$, there exists no weak solution.
\end{itemize}
We remark that when the nonlinearity $f>0$ is nondecreasing and superlinear (i.e., $f(u)/u \to \infty$ as $u\to\infty$), the above properties hold even if the 
weight $a$ and the domain $\Omega$ are arbitrary.\footnote{To avoid a misunderstanding, we remark that $u$ is not always radially symmetric if $a$ is not radially symmetric or $\Omega$ is not a ball.}  
Thus, the boundedness of extremal solutions is well-studied instead of the bifurcation diagrams. We refer to \cite{CR,Ye, A2016,AC,Davdup,sansan, Dup, Ned,cabre2017,cabre2019,cabre2010,CCS,CR-O,
Vil,cabrecapella,CFRS}. 
As a consequence of Theorem \ref{Mainthm}, we get
\begin{corollary}
\label{maincor}
Let $3\le N\le10$ and we assume that $a(r)$ satisfies $(A)$. Then, the extremal solution of $(\ref{gelfand})$ is:
\begin{itemize}
    \item[(i)] bounded if $N\le 9$,
    \item[(ii)] singular if $(a/a_H)'\le 0$ in $(0,1]$ and $N=10$,
    \item[(iii)] bounded if $(a/a_H)'>0$ in $(0,1]$ and $N=10$,
\end{itemize}
where $H$ and $a_H$ are those in Theorem \ref{singularthm}.
\end{corollary}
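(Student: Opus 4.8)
The plan is to deduce the three cases directly from the classification of the bifurcation diagrams, via the principle that the extremal solution $u^*$ is exactly the endpoint of the minimal branch of $\mathcal{C}$, and that this endpoint is singular precisely when the diagram is of Type II and bounded when it is of Type I or Type III. Granting this principle, case (i) follows from Theorem \ref{39th} (the diagram is of Type I for $N\le 9$), while cases (ii) and (iii) follow from Theorem \ref{Mainthm}: the diagram is of Type II when $(a/a_H)'\le 0$ and of Type III when $(a/a_H)'>0$ for $N=10$.

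To establish the principle, I would first identify the minimal branch with the initial segment of $\mathcal{C}$ emanating from $(0,0)$. Since the minimal solutions $u_\lambda$ are radially symmetric and satisfy $u_\lambda<u_{\lambda'}$ for $\lambda<\lambda'$, they lie on $\mathcal{C}$ and sweep out this segment, along which $\lambda$ is increasing. On it each $u_\lambda$ is stable, i.e. $Q_{u_\lambda}(\xi)\ge 0$ for all $\xi$; semistability is lost exactly at the first turning point, where $\lambda(\beta)$ attains its first local maximum and a nontrivial radial function enters the kernel of the linearization. Hence the minimal branch terminates at the first turning point, and the extremal parameter $\lambda^*$ coincides with this first local maximum.

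When a turning point is present — Types I and III, hence cases (i) and (iii) — this first turning point occurs at a finite $\alpha^*:=\alpha(\beta^*)<\infty$, so $u(\cdot,\alpha^*)$ is a classical, hence bounded, solution of \eqref{gelfand}; by continuity of the analytic curve $\mathcal{C}$ we obtain $u^*=\lim_{\lambda\uparrow\lambda^*}u_\lambda=u(\cdot,\alpha^*)$, which is bounded. When no turning point is present — Type II, hence case (ii) — the whole curve $\mathcal{C}$ is parametrized monotonically by $\lambda$ and therefore coincides with the minimal branch. Theorem \ref{singularth} then gives $\lambda(\beta)\to\lambda_*$ and $u(\cdot,\alpha(\beta))\to U_*\in H^1_0(B_1)$ in $C^2_{\mathrm{loc}}(0,1]$ as $\beta\to\infty$; combining this with the monotone convergence $u_\lambda\uparrow u^*$ forces $\lambda^*=\lambda_*$ and $u^*=U_*$, which is singular by the asymptotic expansion \eqref{asymptotic behavior}.

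The step I expect to be most delicate is the identification of the minimal branch with the stable initial segment of $\mathcal{C}$, and in particular the claim that semistability is lost precisely at the first turning point, so that $\lambda^*$ equals the first local maximum in Types I and III and equals $\lambda_*$ in Type II. This requires relating the sign of the principal eigenvalue of $-\Delta-\lambda a(|x|)e^{u}$ along $\mathcal{C}$ to the sign of $\lambda'(\beta)$, together with the fact that no minimal solution persists beyond the first turning point. The remaining ingredients — boundedness at a finite-$\alpha$ turning point and the identification $u^*=U_*$ via monotone convergence — are then routine.
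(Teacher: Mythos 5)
Your strategy is sound and reaches the right conclusions, but your route to the two ``bounded'' cases is genuinely different from the paper's. The paper derives the corollary from the \emph{stability} machinery rather than from the turning points: case (iii) is literally Step~1 of the proof of Theorem~\ref{Mainthm}(ii) (if $u^*$ were unbounded it would coincide, by the uniqueness in Theorem~\ref{singularth}, with the radial singular solution $U_*$, which is shown to be unstable via Propositions~\ref{comparison1} and~\ref{improvedhardy2}, contradicting the stability of extremal solutions); case (i) follows the same way from the instability of $U_*$ when $N\le 9$ (Case~1 of the proof of Theorem~\ref{singularthm}, or the classical Crandall--Rabinowitz argument); and case (ii) follows from Proposition~\ref{brezis V}, since a stable singular solution in $H^1_0$ \emph{is} the extremal solution. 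You instead read everything off the geometry of the curve, using only the statements of Theorems~\ref{39th} and~\ref{Mainthm} as black boxes, which is a legitimate and arguably cleaner reduction. The one caveat concerns the step you yourself flag: as sketched, the eigenvalue argument (``semistability is lost exactly at the first turning point'') does not by itself give $\lambda^*=\lambda(\beta_1)$, because a priori the curve could exceed its first local maximum of $\lambda$ further along. The gap closes without any spectral analysis: by Theorem~\ref{diagramth} every radial solution is recovered from $\beta=\lVert u\rVert_{L^\infty(B_1)}+\log\lambda$, so $\lambda\mapsto\beta(\lambda):=\lVert u_\lambda\rVert_{L^\infty(B_1)}+\log\lambda$ is continuous and injective on $(0,\lambda^*)$, hence strictly increasing onto an interval $(-\infty,\beta^-)$ with $\lambda(\beta(\lambda))=\lambda$. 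Thus $\lambda(\beta)$ is strictly increasing on $(-\infty,\beta^-)$ and tends to $\lambda^*$ at $\beta^-$; since $\lambda(\beta)\le\lambda^*$ for all $\beta$, either $\beta^-=\infty$ (no turning points, the whole curve is the minimal branch, and $u^*=U_*$ is singular by Theorem~\ref{singularth}), or $\beta^-<\infty$, in which case $\beta^-$ is the first turning point, $\lambda(\beta^-)=\lambda^*$ is the global maximum, and $u^*=u(\cdot,\alpha(\beta^-))$ is classical. With this substitution your argument is complete and correct.
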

We introduce known results in the specific case of \eqref{gelfand}. It is known by the method in \cite{CR} that $u^*$ is bounded when $N\le 9$. Moreover, when $N\ge 10$, Bae \cite{Bae} proved that $u^*$ is singular if
\begin{equation}
\label{bae1}
    a(r)\le \frac{N-2}{8}\inf_{0\le s \le r} a(s) \hspace{4mm}\text{for $r>0$}.
\end{equation}
We note that the condition (\ref{bae1}) means that $a(r)$ is nonincreasing when $N=10$. Thus, Corollary \ref{maincor} is not only a generalization of known results when $N\le 10$ but also the first result which states that the boundedness property of $u^*$ changes depending on the perturbation of $a$. Moreover, we remark that when $a=1$ and $\Omega$ is general, Crandall and Rabinowitz \cite{CR} proved that $u^*$ is bounded if $N\le 9$. Based on this result, Brezis and V\'azquez asked in \cite{Br} whether $u^*$ is always singular for all $\Omega$ if $N\ge 10$ and $a=1$. This expectation was answered negatively: some examples
of $\Omega$ were constructed which $u^*$ is bounded for some $N\ge 10$. We refer to \cite{Davdup, AC}. Corollary \ref{maincor} describes a similar property to the above fact.
\subsection{Structure of the paper}
In Section \ref{basic}, we combine the specific change of variables with the method established for the case when $a=1$ and as a result, we prove Theorem \ref{diagramth} and Theorem \ref{singularth}. In Section \ref{Hardy sec},
we prove the optimality\footnote{We explain the meaning of optimality in Section \ref{Hardy sec}.} of the improved Hardy inequality. By using it, we study the stability and the Morse index of $U_h$ and prove Theorem \ref{singularthm}. In Section \ref{separate sec}, we prove the comparison results. The results work well to prove Theorem \ref{Mainthm}. In Section \ref{mainsec},
we prove Theorem \ref{39th} by using the idea of \cite{Mi2014} and we prove Theorem \ref{Mainthm}.
\section{Basic properties of radial solutions}
\label{basic}
In this section, we prove basic properties of radial solutions and as a result, we prove Theorem \ref{diagramth} and Theorem \ref{singularth}. We first fix the notation.
In this paper, we only deal with radial solutions. Thus, from now on, we identify the the operator $\Delta$ with the operator $\frac{d^2}{dr^2}+\frac{N-1}{r}\frac{d}{dr}$, and $r$ with $|x|$. 

\subsection{Preliminaries}
In this subsection, we prove some basic properties for radial solutions of 
\begin{equation}
\label{apriorieq}
-\Delta u = \lambda a(r) e^u, \hspace{4mm}0<r<R,
\end{equation}
where $R>0$ and $\lambda$ are positive constants and $a:[0,R]\to \mathbb{R}$ satisfies the following
\begin{equation}
a(|x|)\in C^2(\overline{B_R}), \hspace{2mm} a(r)> 0,\hspace{2mm}\text{and}\hspace{2mm} a(0)=1.
\tag{$\hat{A}$}
\end{equation}
We note that the following results in this subsection are proved as a modification of \cite{Mi2020}. We begin by introducing an a priori estimate of positive radial solutions.
\begin{lemma}
\label{apriorilemma}
Let $0<\lambda_0<\lambda_1$ and let $u$ be a positive radial solution of \eqref{apriorieq} for some $\lambda_0<\lambda<\lambda_1$. We assume that $a(r)$ satisfies $(\hat{A})$. Then,
there exist $C_1>0$ and $C_2>0$ depending only on $\lambda_0, \lambda_1, a(r), R$ such that 
\begin{equation}
\label{aprioriestimate 2-1}
    u(r)\le -2\log r + C_1, \hspace{4mm} 0\le -\frac{d}{dr}u(r) \le \frac{C_2}{r} \hspace{4mm} \text{for $0<r\le R$},
\end{equation}
and
\begin{equation}
    \label{aprioriestimate 2-2}
    -r^{N-1}\frac{d}{dr}u(r)= \lambda\int_{0}^{r}s^{N-1}a(s)e^{u(s)}\,ds \hspace{4mm} \text{for $0<r\le R$}.
\end{equation}
\end{lemma}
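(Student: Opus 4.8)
The plan is to read \eqref{apriorieq} in its divergence form $\left(r^{N-1}u'(r)\right)'=-\lambda r^{N-1}a(r)e^{u(r)}$ and to exploit the monotonicity this forces on $w(r):=r^{N-1}u'(r)$. Since the right-hand side is strictly negative, $w$ is strictly decreasing, so $L:=\lim_{r\to 0^+}w(r)\in(-\infty,+\infty]$ exists. A short sign analysis rules out $L>0$: if $w$ were positive near $0$, then $u$ would be increasing near the origin and hence bounded there, and the convergence of $\int_0^r s^{N-1}a(s)e^{u(s)}\,ds$ would force $w(0^+)=0$, a contradiction. Thus $L\le 0$, so $w<0$ on $(0,R]$, which already gives $-u'(r)\ge 0$, and integrating $w'$ from $0$ to $r$ yields $-w(r)=-L+\lambda\int_0^r s^{N-1}a(s)e^{u(s)}\,ds$. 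Once the upper bound in \eqref{aprioriestimate 2-1} is established, I will know $L=0$: any $L<0$ would force $u'(r)\sim L r^{-(N-1)}$ and hence $u(r)\sim \frac{-L}{N-2}r^{-(N-2)}$ near the origin, a polynomial blow-up incompatible with a logarithmic upper bound when $N\ge 3$. This pins down \eqref{aprioriestimate 2-2}.

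The heart of the lemma, and the step I expect to be the main obstacle, is the uniform upper bound $u(r)\le -2\log r + C_1$ with $C_1$ depending only on $\lambda_0,\lambda_1,a,R$ and not on the solution. The natural device is the Emden--Fowler change of variables $t=-\log r$ and $z(t):=u(e^{-t})-2t$, under which \eqref{apriorieq} becomes $\ddot z-(N-2)\dot z-2(N-2)+\lambda\,a(e^{-t})e^{z}=0$, with $a(e^{-t})\to a(0)=1$ as $t\to\infty$; the bound to prove is exactly $z(t)\le C_1$ on $[-\log R,\infty)$. I would analyze this through the potential $V(z):=\lambda a\,e^{z}-2(N-2)z$, which is coercive with a single well, together with the energy $E:=\tfrac12\dot z^2+V(z)$, whose evolution is governed by the (sign-reversed) friction term $(N-2)\dot z^2\ge 0$; coercivity of $V$ then converts a bound on $E$ into the desired ceiling for $z$. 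The genuine difficulties are two: making $C_1$ independent of the solution, and controlling the non-autonomous perturbation $a(e^{-t})-1$. For both I would lean on the scaling structure of the exponential nonlinearity: the rescaling $w(\rho):=u(r)-u(0)$ with $\rho=\kappa r$, $\kappa=\sqrt{\lambda e^{u(0)}}$, normalizes the equation to $-\Delta_\rho w=a(\rho/\kappa)e^{w}$, $w(0)=w'(0)=0$, and as $u(0)$ grows one has $\kappa\to\infty$, so for fixed $\rho$ the weight $a(\rho/\kappa)$ flattens to $a(0)=1$ and the profile converges to the universal solution of $-\Delta w=e^{w}$, $w(0)=w'(0)=0$, which satisfies $w(\rho)\le -2\log\rho+C$ uniformly; transferring back produces $u(r)\le -2\log r+C_1$. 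This simultaneous flattening of $a$ is precisely what makes the bound uniform and is where the argument of \cite{Mi2020} must be modified to accommodate a variable weight.

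Finally, the gradient bound follows cheaply from the two facts above. Inserting $e^{u(s)}\le e^{C_1}s^{-2}$ and $a(s)\le \overline a:=\max_{[0,R]}a$ into the identity \eqref{aprioriestimate 2-2} gives $-r^{N-1}u'(r)\le \lambda\overline a\,e^{C_1}\int_0^r s^{N-3}\,ds=\frac{\lambda\overline a\,e^{C_1}}{N-2}\,r^{N-2}$, so that $0\le -u'(r)\le C_2/r$ with $C_2=\frac{\lambda_1\overline a\,e^{C_1}}{N-2}$, which depends only on the stated quantities. Together with the monotonicity $-u'\ge0$ and the identity \eqref{aprioriestimate 2-2} established at the outset, this completes the proof of \eqref{aprioriestimate 2-1} and \eqref{aprioriestimate 2-2}.
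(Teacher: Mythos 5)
Your outer skeleton (monotonicity of $w(r)=r^{N-1}u'(r)$, classification of its limit $L$ at the origin, the integral identity, and the cheap deduction of the gradient bound from the pointwise bound) is the right frame, and the last step is correct. But the heart of the lemma --- the \emph{uniform} bound $u(r)\le -2\log r+C_1$ --- is not actually established by either of the two devices you propose. The energy $E=\tfrac12\dot z^2+V(z)$ in the Emden--Fowler variables satisfies $\dot E=(N-2)\dot z^2\ge 0$: it \emph{increases} as $t\to\infty$ (i.e.\ as $r\to 0$), so it gives a floor, not a ceiling, and coercivity of $V$ buys you nothing without an a priori upper bound on $E$, which is exactly what is missing. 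The rescaling/compactness argument you fall back on has two independent defects: (a) it presupposes $u(0)<\infty$ and $u'(0)=0$, whereas the lemma is stated for arbitrary positive radial solutions of \eqref{apriorieq} on $(0,R)$ and is applied in Proposition \ref{asymptotic prop} precisely to \emph{singular} solutions, for which there is no $u(0)$ to normalize by; and (b) convergence to the limit profile is only locally uniform in the rescaled variable $\rho=\kappa r$, so it controls $r\lesssim M/\kappa$ but says nothing about $r$ of order $1$, where $\rho\in[M,\kappa R]$ escapes every compact set; monotonicity of $u$ gives only $u(r)\le u(M/\kappa)$ there, which is far weaker than $-2\log r+C_1$. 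There is also a circularity: you rule out $L<0$ using the logarithmic upper bound, but your proof of that upper bound does not cover the (hypothetical) solutions with $L<0$. Finally, the claim that convergence of $\int_0^r s^{N-1}ae^u\,ds$ "forces $w(0^+)=0$" is not right as stated --- it forces $L$ finite; the contradiction with $L>0$ comes from $u'\sim Lr^{1-N}$ driving $u\to-\infty$, violating $u>0$.

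The paper omits the proof, citing \cite[Lemma 2.3]{Mi2020}, and the standard argument there is much more elementary and closes all of these gaps at once. Once you know $u'\le 0$, integrate the divergence form of \eqref{apriorieq} over $[r/2,r]$ and drop the boundary term of favorable sign to get
\begin{equation*}
-r^{N-1}u'(r)\;\ge\;\lambda\int_{r/2}^{r}s^{N-1}a(s)e^{u(s)}\,ds\;\ge\;\lambda\,a_{\min}\,e^{u(r)}\frac{(1-2^{-N})}{N}\,r^{N},
\end{equation*}
using $e^{u(s)}\ge e^{u(r)}$ for $s\le r$ and $a_{\min}:=\min_{[0,R]}a>0$. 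This reads $\frac{d}{dr}e^{-u(r)}=-u'(r)e^{-u(r)}\ge c\lambda r$, and integrating from $0$ to $r$ (the boundary term $e^{-u(0^+)}\ge 0$ can be discarded) yields $e^{-u(r)}\ge c\lambda r^{2}/2$, i.e.\ $u(r)\le -2\log r+\log\bigl(2/(c\lambda_0)\bigr)$, with a constant depending only on $\lambda_0,a,N$ --- valid for singular and regular solutions alike and manifestly uniform. The case $L<0$ is excluded not by the upper bound but directly: it would force $u(r)\gtrsim r^{2-N}$, making $\int_0^{r}s^{N-1}e^{u}\,ds$ divergent while the equation bounds it by $L-w(r)<\infty$. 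I recommend replacing your two-pronged attempt at the key estimate with this computation.
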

We remark that we can obtain the a priori estimate in the same way as in \cite[Lemma 2.3]{Mi2020} and thus we omit the proof. 

Then, we quote the result in \cite{Mi2020}. Let $t_0\in \mathbb{R}$, $p>0$, and $-\infty\le \alpha<\beta\le \infty$. In addition, we take $H\in C^{0}((\alpha,\beta))$, $G\in C^{0}([t_0,\infty)\times (\alpha,\beta))$ and we assume that there exists $\gamma\in (\alpha,\beta)$ such that 
\begin{equation*}
    (w-\gamma)H(w)>0\hspace{4mm}\text{for all $w\in (\alpha,\beta)\setminus \{\gamma\}$}.
\end{equation*}
We remark that the assumption implies $H(\gamma)=0$. In \cite{Mi2020}, the authors consider the following ODE
\begin{equation}
\label{general}
    \frac{d^2}{dr^2}w-p\frac{d}{dr}w+H(w)+G(t,w)=0 \hspace{4mm}\text{for $t\ge t_0$}
\end{equation}
and they prove the following
\begin{lemma}
\label{ordlem}
Let $w\in C^2([t_0,\infty))$ be a solution of \eqref{general} satisfying $\alpha<w(t)<\beta$ for all $t\ge t_0$. Assume that $w$ satisfies 
    \begin{equation*}
        \alpha<\limsup_{t\to\infty} w(t)<\beta\hspace{4mm}and \hspace{4mm} \lim_{t\to\infty}G(t, w(t))=0. 
    \end{equation*}
    If $\alpha=-\infty$, assume in addition that 
    \begin{equation*}
       \lim_{t\to\infty} \frac{G(t, w(t))}{e^{w(t)}}=0.
    \end{equation*}
    Then, $\lim_{t\to\infty}w(t)=\gamma$.
\end{lemma}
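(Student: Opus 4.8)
The plan is to treat \eqref{general} as an autonomous-at-infinity second order ODE and to exploit the sign condition on $H$ through an energy (Lyapunov) functional. Let $\Phi$ be a primitive of $H$, i.e. $\Phi'=H$; the hypothesis $(w-\gamma)H(w)>0$ for $w\neq\gamma$ forces $\Phi$ to be strictly decreasing on $(\alpha,\gamma)$ and strictly increasing on $(\gamma,\beta)$, so $\Phi$ has a strict global minimum at $\gamma$. Setting $E(t):=\tfrac12\big(w'(t)\big)^2+\Phi\big(w(t)\big)$ and using $w''=pw'-H(w)-G$, a direct computation gives the identity $E'(t)=p\,(w'(t))^2-G(t,w(t))\,w'(t)$. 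Since $p>0$, the system is anti-dissipative: $E$ is essentially nondecreasing (the phase plane $(w,w')$ has constant positive divergence $p$, so the flow is area expanding and $\gamma$ behaves as a source). The mechanism behind the statement is therefore that the \emph{only} bounded forward orbit is the equilibrium itself, and the proof must convert this heuristic into a rigorous confinement-plus-convergence argument.

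The first step is confinement: to show that $w$ eventually lies in a compact subinterval of $(\alpha,\beta)$, i.e. $\alpha<\liminf_{t\to\infty}w<\limsup_{t\to\infty}w<\beta$. The upper bound is immediate from the hypothesis $\limsup_{t\to\infty}w<\beta$. For the lower bound one uses the sign of $H$ near $\alpha$ (where $H<0$, so $-H$ pushes $w$ upward) to exclude $\liminf w=\alpha$; this is exactly where, in the case $\alpha=-\infty$, the extra hypothesis $G(t,w)/e^{w}\to0$ is needed, ensuring that the nonlinearity dominates the perturbation near $-\infty$ and preventing escape to $-\infty$. Once $w$ is confined to a compact $K\subset(\alpha,\beta)$, the quantities $H(w)$ and $G$ are bounded along the trajectory, so from $w''-pw'=-(H(w)+G)$ one concludes that $w'$ and $w''$ are bounded; in particular $\Phi(w)$ and hence $E$ are bounded, and (since $G\to0$) $G(t,w(t))w'(t)\to0$.

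The second step is convergence, via the dichotomy monotone/oscillatory. If $w$ is eventually monotone, it converges to some finite $\ell\in(\alpha,\beta)$; boundedness of $w''$ forces $w'\to0$, and then $w''\to0$ as well, so passing to the limit in \eqref{general} (using $G\to0$) yields $H(\ell)=0$, whence $\ell=\gamma$. If instead $w$ oscillates indefinitely, I consider consecutive local extrema $\tau_k$ (maxima) and $\sigma_k$ (minima), where $w'=0$ and thus $E=\Phi(w)$. Integrating the energy identity over one full cycle gives that $\Phi$ at successive extrema increases by exactly $p\int_{\mathrm{cycle}}(w')^2\,dt$ minus an error bounded by $\big(\sup_{\mathrm{cycle}}|G|\big)\times(\text{amplitude})$, which tends to $0$. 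Since $E$ (hence $\Phi$ at extrema) is bounded, the per-cycle gains must tend to $0$, which forces the oscillation amplitude to tend to $0$. Finally, evaluating \eqref{general} at maxima ($w''\le0$ gives $H(w)\ge-G$) and at minima ($w''\ge0$ gives $H(w)\le-G$) yields in the limit $H\big(\limsup w\big)\ge0$ and $H\big(\liminf w\big)\le0$; together with $\limsup w=\liminf w$ this pins the common value to $\gamma$.

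The main obstacle is the oscillatory case: rigorously ruling out persistent bounded oscillations requires a \emph{uniform positive lower bound} on the per-cycle energy production $\int_{\mathrm{cycle}}(w')^2\,dt$ whenever the amplitude is bounded below, which amounts to bounding the oscillation half-periods from above and to controlling the non-autonomous error terms—note that $G$ is only known to satisfy $G\to0$ pointwise along the orbit, not $\int|G|<\infty$, so the crude energy estimate alone yields only that the time average of $(w')^2$ vanishes, which is insufficient for pointwise convergence. Conceptually this is guaranteed because positive divergence precludes periodic orbits and the asymptotically autonomous limit system has $\{(\gamma,0)\}$ as its unique equilibrium (a source), so by a Poincaré–Bendixson/Thieme-type argument the $\omega$-limit set is $\{(\gamma,0)\}$; but I would prefer to keep the proof elementary and carry out the cycle-by-cycle estimate, treating the per-cycle lower bound and the confinement-below (avoiding $w\to\alpha$) as the two delicate points.
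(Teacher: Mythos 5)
First, a point of reference: the paper does not prove Lemma \ref{ordlem} at all — it is quoted from \cite{Mi2020} — so there is no in-paper argument to compare yours against, and your proposal has to stand on its own. Its skeleton (the Lyapunov functional $E=\tfrac12(w')^2+\Phi(w)$ with $\Phi'=H$ and $E'=p(w')^2-G\,w'$, the monotone/oscillatory dichotomy, and evaluating the equation at extrema to force $H(\ell)=0$, hence $\ell=\gamma$) is the standard and correct one, and your treatment of the eventually monotone case is sound. But the two places you yourself flag as ``delicate'' are the actual content of the lemma, and the oscillatory case as written contains an invalid inference. You argue that since $\Phi$ at successive extrema is bounded, ``the per-cycle gains must tend to $0$.'' The per-cycle gain is $p\int_{\mathrm{cycle}}(w')^2-\int_{\mathrm{cycle}}G\,w'$, and the error term is only bounded by $\delta_k:=C\sup_{\mathrm{cycle}_k}|G|$ with $\delta_k\to0$; a bounded telescoping sum whose terms are merely bounded below by $-\delta_k$ with $\sum_k\delta_k$ possibly divergent need not have terms tending to $0$, and the hypotheses give only pointwise decay of $G(t,w(t))$, not any summability over cycles. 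So even granting the per-cycle lower bound $\int_{\mathrm{cycle}}(w')^2\ge c(\delta)$ when the amplitude exceeds $\delta$ — which itself needs an upper bound on the cycle duration that you do not establish — the conclusion that the amplitude tends to $0$ does not follow from the estimate you wrote down.

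The second gap is the confinement from below, $\liminf_{t\to\infty}w(t)>\alpha$. You assert that $H<0$ near $\alpha$ ``pushes $w$ upward'' and that the extra hypothesis $G/e^{w}\to0$ is ``exactly where'' this is needed when $\alpha=-\infty$, but no argument is given. Note that at a deep local minimum the equation only yields $H(w)\le -G$, which is automatically satisfied wherever $H<0$, so the extremum test gives nothing here; and $\Phi$ need not diverge as $w\to\alpha^{+}$ (it does so only when $H$ fails to be integrable near $\alpha$), so one cannot in general argue that the orbit lacks the energy to approach $\alpha$. This step requires a genuine quantitative comparison between $H$ and $G$ in the regime $w\to\alpha$, which is precisely what the hypothesis $G/e^{w}\to0$ encodes and what your sketch omits. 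Your fallback via the Markus--Thieme theory of asymptotically autonomous planar systems would in fact close the oscillatory case once confinement is established (one may regard $\tilde G(t):=G(t,w(t))$ as a vanishing forcing depending on $t$ alone, and the limit flow has divergence $p>0$, hence no chain-recurrent set beyond the equilibrium $(\gamma,0)$), but it does not address confinement, and you explicitly decline to carry it out. As it stands, the proposal is a correct outline with the two hardest steps missing and one wrong deduction in the energy bookkeeping.
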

By using Lemma \ref{ordlem}, we study the asymptotic behavior of positive radial singular solutions of \eqref{apriorieq}. 
\begin{proposition}
\label{asymptotic prop}
    Let $u$ be a positive singular radial solution of \eqref{apriorieq}. We assume that $a(r)$ satisfies $(\hat{A})$. Then, $u$ satisfies
    \begin{equation*}
        u(r)\to -2\log r -\log \lambda+ \log2(N-2) \hspace{2mm}\text{as $r\to 0$}. 
    \end{equation*}
\end{proposition}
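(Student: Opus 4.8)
The plan is to reduce the problem to the ODE \eqref{general} via the Emden--Fowler change of variables. Setting $t=-\log r$ and $w(t):=u(e^{-t})-2t$ (so that $w=u+2\log r$, and $r\to0$ corresponds to $t\to\infty$), a direct computation turns \eqref{apriorieq} into
\[
w''-(N-2)w'+\lambda a(e^{-t})e^{w}-2(N-2)=0,\qquad t\ge t_0,
\]
where $t_0:=-\log R$ and $\,{}'=\tfrac{d}{dt}$. This is exactly of the form \eqref{general} with $p=N-2>0$, $H(w)=\lambda e^{w}-2(N-2)$ and $G(t,w)=\lambda\bigl(a(e^{-t})-1\bigr)e^{w}$, and the conclusion of the proposition is precisely $w(t)\to\gamma:=\log\tfrac{2(N-2)}{\lambda}=-\log\lambda+\log 2(N-2)$. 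Since $\lambda e^{\gamma}=2(N-2)$ we have $H(\gamma)=0$, and as $w\mapsto\lambda e^{w}$ is increasing, $(w-\gamma)H(w)>0$ for $w\ne\gamma$; thus the structural hypothesis preceding \eqref{general} holds with $\alpha=-\infty$ and $\beta=+\infty$.

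It then remains to verify the hypotheses of Lemma \ref{ordlem}. Smoothness is immediate: $u\in C^2(0,R)$ gives $w\in C^2([t_0,\infty))$. The a priori bound $u(r)\le-2\log r+C_1$ of \eqref{aprioriestimate 2-1} reads $w(t)\le C_1$, so $\limsup_{t\to\infty}w(t)\le C_1<\beta$ and, in particular, $e^{w}$ stays bounded. Combined with $a(0)=1$ and the continuity of $a$, this gives $G(t,w(t))=\lambda(a(e^{-t})-1)e^{w(t)}\to0$ and $G(t,w(t))/e^{w(t)}=\lambda(a(e^{-t})-1)\to0$ as $t\to\infty$, which are the two remaining decay conditions required in the case $\alpha=-\infty$.

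The one genuinely nontrivial point, which I expect to be the main obstacle, is the lower bound $\limsup_{t\to\infty}w(t)>-\infty$, i.e.\ ruling out $w(t)\to-\infty$. Here I would exploit the gradient estimate in \eqref{aprioriestimate 2-1}: since $\tfrac{du}{dt}=-r\tfrac{d}{dr}u=r\bigl(-\tfrac{d}{dr}u\bigr)\in[0,C_2]$, the quantity $\phi:=w'+2=\tfrac{du}{dt}$ stays in $[0,C_2]$ and solves $\phi'=(N-2)\phi-\lambda a(e^{-t})e^{w}$. Assume, for contradiction, that $w\to-\infty$. Then $\lambda a(e^{-t})e^{w}\to0$, and a continuation argument forces $\phi\to0$: if $\limsup_{t\to\infty}\phi=:L>0$, pick $t_*$ large with $\lambda a(e^{-t})e^{w}<(N-2)L/4$ for $t\ge t_*$ and $\phi(t_*)>L/2$; then as long as $\phi\ge L/2$ one has $\phi'\ge(N-2)(L/2)-(N-2)L/4=(N-2)L/4>0$, so $\phi$ keeps increasing and stays $\ge L/2$, whence $\phi\to\infty$, contradicting $\phi\le C_2$. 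Hence $\phi\to0$ and $w'\to-2$, so $w(t)\le-\tfrac32 t+C$ for large $t$ and $e^{w}$ decays exponentially. Integrating $(e^{-(N-2)t}\phi)'=-\lambda a(e^{-t})e^{w}e^{-(N-2)t}$ yields the representation $\phi(t)=\lambda e^{(N-2)t}\int_t^{\infty}a(e^{-s})e^{w(s)}e^{-(N-2)s}\,ds=O(e^{-3t/2})$. Thus $\tfrac{du}{dt}=\phi$ is integrable on $[t_0,\infty)$, so $u$ remains bounded as $t\to\infty$, contradicting that $u$ is singular ($u\to\infty$ as $r\to0$). Therefore $w\not\to-\infty$, and $\limsup_{t\to\infty}w(t)>-\infty$.

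With all hypotheses verified, Lemma \ref{ordlem} gives $w(t)\to\gamma$, that is, $u(r)+2\log r\to-\log\lambda+\log 2(N-2)$, which is the assertion of the proposition.
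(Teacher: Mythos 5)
Your proof is correct and follows essentially the same route as the paper: the Emden--Fowler substitution $t=-\log r$, reduction to the form \eqref{general}, and an application of Lemma \ref{ordlem} with $p=N-2$ and $\alpha=-\infty$ (the paper merely normalizes $w$ so that $\gamma=0$). The only substantive difference is that the paper obtains the key lower bound $\limsup_{t\to\infty}w(t)>-\infty$ by citing the method of \cite[Lemma 2.4]{Mi2020}, whereas you supply a self-contained and correct argument for it via the quantity $\phi=w'+2$ and the integral representation showing $\phi$ would be integrable if $w\to-\infty$, contradicting the singularity of $u$.
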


\begin{proof}

We define  
 \begin{equation*}
 w(t)=u(r)+2\log r + \log \lambda-\log 2(N-2)\hspace{2mm}\text{with $t=-\log r$}. 
 \end{equation*}
 Then, $w$ satisfies 
\begin{equation*}
    \frac{d^2}{dt^2}w-(N-2)\frac{d}{dt}w+2(N-2)(e^w-1) + 2(N-2)(a(e^{-t})-1) e^w=0
\end{equation*}
for $t\ge t_0$ with $t_0:=-\log R$. Moreover, we obtain 
\begin{align}
\label{fact2-1}
    -\infty<\limsup_{r\to 0}(u(r)+2\log r)\le C_1
\end{align}
by using Lemma \ref{apriorilemma} and the same method as in the proof of \cite[Lemma 2.4]{Mi2020}, where $C_1$ is that in \eqref{aprioriestimate 2-1}. The existence of this change of variables and the fact \eqref{fact2-1} allow for us to apply Lemma \ref{ordlem} for $p=N-2$, $H(w)=2(N-2)(e^{w}-1)$, $G(t,w)=2(N-2)(a(e^{-t})-1)e^w$, $\alpha=-\infty$, $\beta=\infty$. As a result, it holds $\lim_{t\to \infty} w(t)=0$ and we get the result.
\end{proof}

\subsection{Proof of Theorem \ref{diagramth} and Theorem \ref{singularth}}
\label{ababa}
 We begin by introducing a specific change of variables and proving Theorem \ref{diagramth}.
We extend $a$ on $[1,\infty)$ a positive function such that $a(r)\in C^2(\mathbb{R})$.
 For any $\beta\in \mathbb{R}$, we define $v=v(r,\beta)$ as the solution of 
\begin{equation}
\label{odev}
\left\{
\begin{alignedat}{4}
 &-\Delta v= a(r)e^{v}, \hspace{14mm}0<r<\infty,\\
&v(0,\beta)=\beta, \hspace{4mm} \frac{d}{dr} v(0,\beta)=0.
\end{alignedat}
\right.
\end{equation}
\begin{proof}[Proof of theorem \ref{diagramth}]
For any $\beta\in \mathbb{R}$, we define $u(r,\alpha(\beta))=v(r, \beta)-\log \lambda(\beta)$ with $\lambda(\beta):=e^{v(1,\beta)}$ and $\alpha(\beta):=\beta- \log \lambda(\beta)$. Then, we can easily confirm that $(\lambda(\beta), u(r,\alpha(\beta)))$ is a solution of \eqref{gelfand}. On the other hand, if $(\lambda, u)$ is a solution of \eqref{gelfand} which satisfies $\alpha=\lVert u \rVert_{L^{\infty}(B_1)}$ for some $\alpha>0$, then $v(r,\beta):=u+\log \lambda$ is a solution of \eqref{odev} for $\beta=\alpha+\log \lambda$. Thus, we have
$(\lambda,u)=(\lambda(\beta), u(r,\alpha(\beta)))$. Therefore, every radial solution is parametrized by $\beta$. 
Moreover, the analyticity of this curve follows from the analyticity of the exponential function. The left part can be proved by using the properties of the minimal branch (see Subsection \ref{applisub}).
\end{proof}

Then, based on the idea of \cite{Lin, Mi2020}, we prove the following
\begin{proposition}
\label{pohopro}
Let $q>\frac{N+2}{N-2}$, $\lambda>0$. We assume that $a(r)$ satisfies $(\hat{A})$ and $u\in C^2(B_R)$ is a positive radial solution of \eqref{apriorieq} satisfying $u(R)=0$.
\begin{itemize}
    \item[(i)] Let $u(0)>q+1$ and we assume that $u(r)\ge q+1$ for $0\le r\le r_0$ with some $r_0>0$. Then,
    \begin{equation*}
        0<-r\frac{du}{dr}(r)<\frac{2N}{q+1}u(r)+ C\hspace{4mm}\text{for $0<r<r_0$,}
    \end{equation*}
    where $C$ is a constant depending only on $a$ and $R$.
    \item[(ii)]Define
    \begin{equation*}
        \tau=\frac{1}{3}\left(1-\frac{2N}{(q+1)(N-2)}\right).
    \end{equation*}
     Take any $\rho\ge q+1+C$ and define $r_{\rho}$ by 
    \begin{equation*}
        r_{\rho}:=\left(\frac{2N\rho}{\lambda e^{\rho/\tau}\lVert a\rVert_{L^{\infty}(B_R)} }\right)^{1/2},
    \end{equation*}
    where $C$ is that in (i).
    If $u(0)>\rho/\tau$ then $u$ satisfies
    \begin{equation*}
        u(r)>\rho \hspace{4mm}\text{for $0\le r \le r_{\rho}$.}
    \end{equation*}
\end{itemize}
\end{proposition}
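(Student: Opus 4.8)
\emph{Proof plan.}

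For (i) the plan is to run a Pohozaev-type argument on the ball with a free multiplier tuned to the supercriticality $q>\frac{N+2}{N-2}$. I would set $\theta:=\frac{N}{q+1}$ and introduce the modified Pohozaev function
\[
P(r):=r^N\Big(\tfrac12 (u')^2+\lambda a(r)e^{u}\Big)+\theta\, r^{N-1}u\,u'.
\]
A direct computation using $u''=-\frac{N-1}{r}u'-\lambda a e^{u}$ makes the $uu'$-terms cancel and gives
\[
P'(r)=\Big(\theta-\tfrac{N-2}{2}\Big)r^{N-1}(u')^2+\lambda a\, r^{N-1}e^{u}\big(N-\theta u\big)+\lambda a'(r)\,r^{N}e^{u}.
\]
The two decisive signs are structural: supercriticality is exactly $\theta=\frac{N}{q+1}<\frac{N-2}{2}$, so the first term is $\le 0$; and $u\ge q+1$ yields $N-\theta u\le N-\frac{N}{q+1}(q+1)=0$, so the second term is $\le 0$. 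Hence $P'(r)\le \lambda\lVert a'\rVert_{L^\infty} r^{N}e^{u}$. Since $s^N\le r\,s^{N-1}$ on $[0,r]$, together with $\inf_{[0,R]}a>0$ and \eqref{aprioriestimate 2-2} this bounds $\lambda\int_0^r s^N e^{u}\,ds\le \frac{r}{\inf_{[0,R]}a}(-r^{N-1}u')$; integrating from $0$, where $P(0)=0$, I obtain $P(r)\le \frac{\lVert a'\rVert_{L^\infty}}{\inf_{[0,R]}a}r^{N-1}\sigma$ with $\sigma:=-ru'>0$ (positivity from \eqref{aprioriestimate 2-2}). Discarding the nonnegative term $r^N\lambda a e^{u}$ in $P$ gives the lower bound $P(r)\ge r^{N-2}\sigma\big(\tfrac12\sigma-\theta u\big)$, and dividing the resulting chain by $r^{N-2}\sigma$ produces $\tfrac12\sigma-\theta u\le \frac{\lVert a'\rVert_{L^\infty}}{\inf_{[0,R]}a}R$, that is $-ru'\le \frac{2N}{q+1}u+C$ with $C=\frac{2R\lVert a'\rVert_{L^\infty}}{\inf_{[0,R]}a}$; the strict inequality comes from the discarded strictly positive term, and this $C$ is precisely the constant entering (ii).

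For (ii) I would argue by contradiction, using (i) as the engine. Suppose $u(0)>\rho/\tau$ but $u(\bar r)=\rho$ at a first point $\bar r\in(0,r_\rho]$, so that $u\ge\rho\ge q+1+C$ on $[0,\bar r]$ and (i) applies there. It is clean to pass to the Emden variable $t=-\log r$, $v(t)=u(e^{-t})$, for which $-ru'=v'$, so (i) reads $v'\le \frac{2N}{q+1}v+C$ with $v$ increasing and $v(\infty)=u(0)$. Two estimates then compete for control of the descent of $u$: near the centre, the identity \eqref{aprioriestimate 2-2} together with $e^{u}\le e^{u(0)}$ gives the quadratic bound $u(0)-u(r)\le \frac{\lambda\lVert a\rVert_{L^\infty}e^{u(0)}}{2N}r^2$, while away from the centre (i) prevents $u$ from decaying faster than the supercritical rate $K:=\frac{2N}{q+1}$. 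The radius $r_\rho$ is calibrated so that \emph{at} the threshold $u(0)=\rho/\tau$ the quadratic descent alone consumes only a fixed fraction (governed by $\tau<\tfrac13$) of the budget $u(0)-\rho=\rho\frac{1-\tau}{\tau}$, and $\tau=\frac13\big(1-\frac{K}{N-2}\big)$ is exactly the exponent that balances this centre contribution against the part-(i) contribution through the ratio $K/(N-2)$. Splitting $[0,\bar r]$ at the scale where the two bounds meet and adding the contributions should force $u(\bar r)>\rho$, contradicting $u(\bar r)=\rho$.

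The main obstacle lies entirely in (ii), in the regime of \emph{large} $u(0)$. The crude quadratic estimate carries the factor $e^{u(0)}$ and becomes useless once $u(0)\gg\rho/\tau$, whereas the part-(i) bound $-ru'\le Ku+C$ only forbids growth of order $r^{-K}$ and is far from the true singular rate $-ru'\approx 2$; neither ingredient closes the argument by itself. The delicate point is therefore to interpolate the two on the correct dyadic scale—roughly $r\sim e^{-u(0)/2}$, where the solution leaves its flat core and joins the singular profile—so that the $e^{u(0)}$ factors cancel and the surviving constants collapse exactly to the $2N$, $e^{\rho/\tau}$ and $\tau$ appearing in $r_\rho$. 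Securing these sharp constants, equivalently upgrading the part-(i) decay to the effective rate the singular profile dictates on $B_{r_\rho}$, is where the real work sits, and the supercritical sign established in (i) is precisely what makes that upgrade possible; this is why (i) is proved first.
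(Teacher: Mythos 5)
Your part (i) is correct and is essentially the paper's own argument: your multiplier $\theta=N/(q+1)$ is exactly the $\mu$ used with Lemma \ref{generalpohozaev}, the two sign conditions (supercriticality forcing $\theta<\tfrac{N-2}{2}$, and $u\ge q+1$ forcing $N-\theta u\le 0$) are the same, and the $a'$-term is absorbed through \eqref{aprioriestimate 2-2} with the same constant $C=2R\lVert a'\rVert_{L^{\infty}}/\inf a$; the only cosmetic difference is that the paper's Pohozaev functional carries $e^u-1$ where yours carries $e^u$, which changes nothing.

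Part (ii) is where the genuine gap sits, and your closing paragraph correctly locates it but does not resolve it. (The paper itself gives no argument here either: it defers to the proof of Lemma 5.1(ii) in \cite{Mi2020}.) The one-scale interpolation you propose provably cannot close. If $\bar r\le r_\rho$ is the first zero of $u-\rho$ and you split $[0,\bar r]$ at the radius $r_*$ where the two bounds meet, then $r_*^2\sim N(Ku(0)+C)/(\lambda\lVert a\rVert_{L^\infty}e^{u(0)})$ with $K=\tfrac{2N}{q+1}$, the quadratic bound contributes a descent of order $Ku(0)$ on $[0,r_*]$, and integrating $-ru'<Ku+C$ over $[r_*,r_\rho]$ contributes at most $(Ku(0)+C)\log(r_\rho/r_*)\sim \tfrac{K}{2}\,u(0)\,(u(0)-\rho/\tau)$; the total is of order $K u(0)^2$ for large $u(0)$, while the available budget is only $u(0)-\rho$. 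Even at the threshold $u(0)=\rho/\tau$ the resulting condition reduces to $K<2-2\tau$ (up to lower-order terms), which fails for most admissible $q$ when $N\ge 5$, and the discrepancy grows without bound as $u(0)\to\infty$. The root cause is structural: the part-(i) inequality is linear in $u$, whereas the true decay rate along the singular profile is $-ru'\approx 2$, so no single splitting scale can cancel the $e^{u(0)}$ factor and leave the constant $e^{\rho/\tau}$ appearing in $r_\rho$. The "real work" you flag is therefore not a matter of tracking constants; a different mechanism (the level-by-level iteration of \cite[Lemma 5.1(ii)]{Mi2020} that the paper invokes) is required, and your proposal does not supply it.
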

In order to prove this proposition, we quote a Pohozaev-type identity. This identity is proved by Ni and Serrin \cite{Niserrin} when $a=1$.
\begin{lemma}
\label{generalpohozaev}
Let $u$ be a radial solution of \eqref{apriorieq} and let $\mu$ be an arbitrary constant. We assume that $a(r)$ satisfies $(\hat{A})$.
Then, for any $r\in (0,R)$, we have
\begin{align}
&\frac{d}{dr}\left\{r^N\left(\frac{1}{2}\left(\frac{du}{dr}\right)^2
+\lambda a(r) (e^{u(r)}-1) +\frac{\mu}{r}u(r)\frac{du}{dr}\right)\right\}-\lambda \frac{da}{dr}(e^{u(r)}-1)r^N\notag\\
&=r^{N-1}\left\{N\lambda a(r)(e^{u(r)}-1)-\lambda\mu a(r) u(r)e^{u(r)}+\left(\mu+1-\frac{N}{2}\right)\left(\frac{du}{dr}\right)^2\right\}.\notag
\end{align}
\end{lemma}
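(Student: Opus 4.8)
The plan is to prove the identity by direct verification: expand the total derivative on the left-hand side, substitute the radial form of the equation \eqref{apriorieq} to eliminate the second-order term, and check that the surviving terms collect into the right-hand side. Throughout I write $' = \tfrac{d}{dr}$, so that a radial solution of \eqref{apriorieq} satisfies
\begin{equation*}
u'' + \frac{N-1}{r}u' + \lambda a(r) e^{u} = 0, \qquad \text{equivalently} \qquad u'' = -\frac{N-1}{r}u' - \lambda a(r) e^{u}.
\end{equation*}

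First I would differentiate the bracketed expression term by term, using the product rule and the chain rule $\tfrac{d}{dr}e^{u} = u' e^{u}$. The three contributions are
\begin{align*}
\frac{d}{dr}\!\left[\tfrac12 r^N (u')^2\right] &= \tfrac{N}{2}r^{N-1}(u')^2 + r^N u' u'', \\
\frac{d}{dr}\!\left[\lambda r^N a (e^{u}-1)\right] &= \lambda N r^{N-1} a (e^{u}-1) + \lambda r^N a' (e^{u}-1) + \lambda r^N a\, e^{u} u', \\
\frac{d}{dr}\!\left[\mu r^{N-1} u u'\right] &= \mu (N-1) r^{N-2} u u' + \mu r^{N-1}(u')^2 + \mu r^{N-1} u u''.
\end{align*}
At this stage the subtracted term $-\lambda a'(e^{u}-1)r^N$ appearing in the statement exactly cancels the middle contribution $\lambda r^N a'(e^{u}-1)$, which conveniently removes the only derivative of the weight.

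The key step is then to replace every occurrence of $u''$ using the equation above. In $r^N u' u''$ this produces $-(N-1)r^{N-1}(u')^2 - \lambda r^N a\, e^{u} u'$, whose second piece cancels the term $\lambda r^N a\, e^{u} u'$ from the expansion; in $\mu r^{N-1} u u''$ it produces $-\mu(N-1)r^{N-2}u u' - \lambda\mu r^{N-1} a\, u e^{u}$, whose first piece cancels $\mu(N-1)r^{N-2}u u'$. After these cancellations only three families of terms survive: the quadratic terms in $(u')^2$, the term $\lambda N r^{N-1} a (e^{u}-1)$, and the term $-\lambda\mu r^{N-1} a\, u e^{u}$.

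Finally I would collect coefficients. The $(u')^2$ terms combine as $\left(\tfrac{N}{2} - (N-1) + \mu\right)r^{N-1}(u')^2 = \left(\mu + 1 - \tfrac{N}{2}\right)r^{N-1}(u')^2$, and factoring $r^{N-1}$ out of the remaining weight terms yields exactly the right-hand side of the asserted identity. No step presents a genuine obstacle, since this is the Ni--Serrin computation \cite{Niserrin} carried out with the extra weight $a(r)$; the only thing to watch is the bookkeeping of signs, particularly the coefficient of $(u')^2$, where the $\tfrac{N}{2}$ from differentiating the kinetic term, the $-(N-1)$ from the $u''$ substitution, and the $\mu$ from the mixed term must combine correctly. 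The role of subtracting $\lambda a'(e^{u}-1)r^N$ is precisely to absorb the weight-derivative produced by the product rule, so that the final expression involves $a$ but not $a'$.
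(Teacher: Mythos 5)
Your computation is correct and is exactly the direct verification the paper intends (its proof is simply ``By a direct computation, we get the result''): you expand the derivative, use the subtracted $-\lambda a'(e^{u}-1)r^N$ term to absorb the weight derivative, substitute the radial ODE for $u''$, and the coefficient bookkeeping $\tfrac{N}{2}-(N-1)+\mu=\mu+1-\tfrac{N}{2}$ checks out. Nothing is missing.
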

\begin{proof}
By a direct computation, we get the result.
\end{proof}

\begin{proof}[Proof of Proposition \ref{pohopro}]
Put $\mu:=N/(q+1)$. Then, we remark that the right-hand side of the inequality in Lemma \ref{generalpohozaev} is negative for $0\le r \le r_0$ since it holds $N(e^{u(r)}-1)<\mu e^{u(r)}u(r)$ for $0\le r\le r_0$. By integrating the inequality in Lemma \ref{generalpohozaev} on $[0,r]$ with $0\le r\le r_0$, we obtain
\begin{equation*}
    \frac{1}{2}r^{N} \left(\frac{du}{dr}\right)^2
+\lambda a(r)r^{N} (e^{u(r)}-1) +\mu r^{N-1}\frac{du}{dr}u<\lambda\int_{0}^{r}s^{N} \frac{da}{ds}(e^{u(s)}-1)\,ds.
\end{equation*}
Moreover, it follows by (\ref{aprioriestimate 2-2}) that 
\begin{align*}
    \lambda\int_{0}^{r} s^{N}\frac{da}{ds}(e^{u(s)}-1)\,ds\le Cr\lambda\int_{0}^{r} s^{N-1}a(s)e^{u(s)}\,ds\le -Cr^{N-1}\frac{du}{dr},
    \end{align*}
where $C>0$ is depending only on $a$ and $R$. Thus, we get (i). As a result, we can prove (ii) by using a method similar to the proof of \cite[Lemma 5.1 (ii)]{Mi2020}.
\end{proof}

\begin{proof}[Proof of Theorem \ref{singularth}.]
We divide the proof into the following three steps.

\vspace{5pt}

\noindent
\textbf{Step 1.} \textit{The bifurcation curve is parametrized by $\lambda$ when $\lambda$ is sufficiently small.}

We first claim that there exist $C_1$, $\delta$ depending only on $a$, $N$ such that 
if $\alpha(\beta)>C_1$, then $\lambda(\beta)>\delta$. Indeed, let $q$ and $\rho$ be constants satisfying the assumption in Proposition \ref{pohopro} and we define $C_1:= \rho/\tau$. Let $\delta$ be a small constant such that $\tau_{\rho}\ge1$ if $\lambda<\delta$. Then, thanks to Proposition \ref{pohopro}, if $\lambda(\beta)\le \delta$, then $\alpha(\beta)\le C_1$. Thus, we get the claim. 

Let $\eta:=\lVert u^*\rVert_{L^\infty(B_1)}+\log \lambda^*$, where $u^*$ is the extremal solution with $\lambda=\lambda^*$. We fix a constant $\hat{\lambda}<\min\{\delta,\lambda^{*}\}$ such that $C_1+\log \hat{\lambda}<\eta$. Let $(\lambda(\beta),u(r,\alpha(\beta))$ be a solution of \eqref{gelfand} such that $\lambda(\beta)\le \hat{\lambda}$. Since $\hat{\lambda}<\delta$, we get $\alpha(\beta)\le C_1$. Moreover, since $\beta=\alpha(\beta)+\log \lambda(\beta)$, 
we have $\beta<\eta$. We recall that the bifurcation curve is parametrized by $\beta$. Therefore, the solution $(\lambda(\beta), u(r,\alpha(\beta)))$ belongs to the minimal branch of $C$. Since the minimal branch is parametrized by $\lambda$, we get the result. 

We recall that $\beta=\alpha(\beta)+\log \lambda(\beta)$ and $\lambda(\beta)\le \lambda^{*}$ for all $\beta$. Thus, as a consequence of Step 1, we verify that $\alpha(\beta)\to\infty$ if and only if $\beta\to\infty$.
\vspace{5pt}

\noindent
\textbf{Step 2.} \textit{The radial singular solutions $(\lambda_{*},U_{*})$ are at most one.}

Let $(\lambda^{1}_{*}, U^1_{*})$ and $(\lambda^{2}_{*}, U^2_{*})$ be radial singular solutions. Thanks to Proposition \ref{asymptotic prop}, $(\lambda^{1}_{*}, U^1_{*})$ and $(\lambda^{2}_{*}, U^2_{*})$ satisfy \eqref{asymptotic behavior}. Let $V^{i}_{*}:=U^{i}_{*}+\log \lambda^{i}_{*}$ with $i=1,2$. Then $V_{*}=V^{i}_*$ satisfies
\begin{equation}
\label{singulareq}
\left\{
\begin{alignedat}{4}
 -\Delta V_{*}&= a(r)e^{V_{*}}, &&0<r<\infty,\\
V_{*}(r)&= -2\log r + \log 2(N-2)+o(1) \hspace{6mm} &&\text{as $r\to 0$}.
\end{alignedat}
\right.
\end{equation}
Moreover, we have $V^{i}_*(1)=\log \lambda^{i}_{*}$.
By \cite[Theorem 1.6]{Bae}, it holds $V^{1}_*=V^{2}_*$. In particular, since $V^{1}_*(1)=V^{2}_*(1)$, it holds $\lambda^{1}_{*}=\lambda^{2}_{*}$ and $U_*^{1}=U_*^{2}$. 

\vspace{5pt}

\noindent
\textbf{Step 3} \textit{Existence of the radial singular solution and  \eqref{convergence schme}.} 

Let $\delta>0$ and let $(\lambda(\beta_{k}), u(r,\alpha(\beta_{k})))\in C$ with $\beta_{k}\to\infty$ as $k\to\infty$. Thanks to Step 1, we get $\alpha(\beta_k)\to \infty$ as $k\to\infty$ and $\hat{\lambda}< \lambda(\beta_k) \le \lambda^*$ for all $k$ sufficiently large. Moreover, by Lemma \ref{apriorilemma} and the elliptic regularity theory (see \cite{gil}), there exists some $0<\delta_0<1$ such that it holds 
$\lVert u(r,\alpha(\beta_k))\rVert_{C^{2+\delta_0}(B_1 \setminus B_{\delta})}<C$, where $C$ is a constant independent of $k$. Since $\delta>0$ is arbitrary, by Ascoli-Arzel\'a theorem and a diagonal argument, there exists a subsequence $k_j$ of $k$ and a solution $U_{*}\in C^2_{\mathrm{loc}}((0,1])$ of \eqref{gelfand} for $\lambda=\lambda_{*}$ such that  
$u(r,\alpha(\beta_{k_{j}}))\to U_{*}$ in $C^2_{\mathrm{loc}}((0,1])$ as $j\to\infty$
and $\lambda(\beta_{k_{j}})\to \lambda_{*}$ as $j\to \infty$. Moreover, thanks to Proposition \ref{pohopro}, we verify that $(\lambda_{*}, U_{*})$ is a radial singular solution.
By the uniqueness of the radial singular solution, 
we have $u(r,\alpha(\beta_{k}))\to U_{*}$ as $\beta_k\to\infty$.

We can easily know the left part by using Lemma \ref{apriorilemma} and Proposition \ref{asymptotic prop}.
\end{proof}

\subsection{Some notations}
From now on, we fix some notations. We define 
$v(r,\beta)$ and $V_{*}$ as those in \eqref{odev} and \eqref{singulareq} respectively. Then, we define $v_h(r,\beta)$ and $V_h$ as those in \eqref{odev} and \eqref{singulareq} for $V_{*}=V_{h}$ and $a=a_h$, where $a_h$ is that in Theorem \ref{singularthm}. In particular, $v_0(r,\beta)$ and $V_0$ mean those for $a=a_0=1$ and we remark that $V_0(r)=-2 \log r$. 
Moreover, we define $(\lambda(\beta), u(r,\alpha(\beta)))$ as that in Theorem \ref{diagramth} and we define $a_h, \lambda_h, U_h$ as those in Theorem \ref{singularthm}. Then, we define $u^*$ as the extremal solution for $\lambda=\lambda^*$.

\section{Stability and Morse index of the specific singular solutions}
\label{Hardy sec}
In this section, we study the stability and the Morse index of $U_h$, where $U_h$ is that in 
Theorem \ref{singularthm}. We first recall the improved Hardy inequality.
\begin{proposition}[see \cite{Br}]
\label{improved hardy}
    Let $N\ge 2$ and $\Omega$ be a bounded domain in $\mathbb{R}^{N}$. Then, for every $\xi\in H^{1}_{0}(\Omega)$ we have
    \begin{equation}
    \label{imp}
         \int_{\Omega}|\nabla \xi|^2\,dx\ge \frac{(N-2)^2}{4}\int_{\Omega} \frac{\xi^2}{|x|^2}\,dx+ H \left(\frac{\omega_N}{|\Omega|}\right)^{\frac{2}{N}}\int_{\Omega} \xi^2\,dx,  
    \end{equation}
where $H$ is the first eigenvalue of the Laplacian in the unit ball in $N=2$ and $\omega_N$ is the measure of the $N$-dimensional unit ball.
\end{proposition}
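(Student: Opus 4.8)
The plan is to prove the inequality in three reductions: first reduce to the case of a ball by symmetrization, then reduce to a one-dimensional weighted estimate by the Hardy substitution, and finally recognize that estimate as the first Dirichlet eigenvalue inequality for $-\Delta$ on a planar disk, whose value is exactly $H$. None of the three steps is conceptually deep; the work is in arranging the inequalities so that they all point in the required direction.

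For the reduction to a ball, let $\xi\in H^1_0(\Omega)$ and let $\xi^*$ be its Schwarz symmetrization, a radially symmetric decreasing function on the ball $B_R$ with $\omega_N R^N=|\Omega|$, so that $R^{-2}=(\omega_N/|\Omega|)^{2/N}$. Three classical rearrangement facts are available: the P\'olya--Szeg\H{o} inequality $\int_\Omega|\nabla\xi|^2\,dx\ge\int_{B_R}|\nabla\xi^*|^2\,dx$; the Hardy--Littlewood inequality $\int_{B_R}(\xi^*)^2|x|^{-2}\,dx\ge\int_\Omega\xi^2|x|^{-2}\,dx$, valid because $|x|^{-2}$ already coincides with its own symmetric decreasing rearrangement; and the equimeasurability identity $\int_{B_R}(\xi^*)^2\,dx=\int_\Omega\xi^2\,dx$. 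Each of these points in the direction needed, so chaining them shows that once the inequality is established for the radial function $\xi^*$ on $B_R$ it follows for $\xi$ on $\Omega$. Hence it suffices to prove, for radially symmetric $\xi\in H^1_0(B_R)$,
\[
\int_{B_R}|\nabla\xi|^2\,dx-\frac{(N-2)^2}{4}\int_{B_R}\frac{\xi^2}{|x|^2}\,dx\ \ge\ \frac{H}{R^2}\int_{B_R}\xi^2\,dx .
\]

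On the ball I would then make the Hardy substitution $\xi(r)=r^{-(N-2)/2}\phi(r)$, where $\phi(R)=0$ and $\phi(r)=o(r^{(N-2)/2})$ near the origin so that $\xi\in H^1_0$. A direct computation gives $(\xi')^2 r^{N-1}=r(\phi')^2-\tfrac{N-2}{2}(\phi^2)'+\tfrac{(N-2)^2}{4}r^{-1}\phi^2$, and the middle term integrates to a boundary contribution that vanishes under these decay conditions. Multiplying by the surface measure $N\omega_N$ of the unit sphere and integrating, one finds that the singular Hardy term cancels exactly, leaving
\[
\int_{B_R}|\nabla\xi|^2\,dx-\frac{(N-2)^2}{4}\int_{B_R}\frac{\xi^2}{|x|^2}\,dx= N\omega_N\int_0^R r\,(\phi')^2\,dr,
\qquad
\int_{B_R}\xi^2\,dx=N\omega_N\int_0^R r\,\phi^2\,dr,
\]
so the desired inequality collapses to the one-dimensional weighted estimate $\int_0^R r(\phi')^2\,dr\ge (H/R^2)\int_0^R r\phi^2\,dr$. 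I recognize these two integrals as, up to the factor $2\pi$, the Dirichlet energy and squared $L^2$-norm of the radial function $\phi$ on the \emph{two-dimensional} disk of radius $R$; the sharp constant is therefore the first Dirichlet eigenvalue of $-\Delta$ on that disk, namely $H/R^2$ (attained by $J_0(\sqrt{H}\,r)$), obtained from the case $R=1$ by scaling. The variational characterization of the first eigenvalue yields the inequality and, since equality forces $\phi$ proportional to the Bessel ground state, shows that the constant $H$ is optimal. For $N=2$ the substitution is trivial and the statement is precisely this planar Poincar\'e inequality.

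The step I expect to be the main obstacle is analytic rather than conceptual: justifying the substitution and the vanishing of the boundary term at the origin for a general $H^1_0$ function in the presence of the singular weight $|x|^{-2}$. I would resolve this by carrying out all manipulations first for $\xi\in C_0^\infty(\Omega\setminus\{0\})$, where they are unambiguous, and then passing to the limit by density, using the finiteness of the Hardy integral itself to control the approximation near the origin. The other point requiring care is confirming the correct monotonicity of the three rearrangement inequalities, in particular that $\int\xi^2/|x|^2$ \emph{increases} under symmetrization; this is exactly what makes the reduction to the ball go through in the favorable direction.
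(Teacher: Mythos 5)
The paper does not actually prove this proposition --- it is quoted from Brezis--V\'azquez \cite{Br} --- and your argument is precisely the proof given there: Schwarz symmetrization (P\'olya--Szeg\H{o}, Hardy--Littlewood against the already-symmetric weight $|x|^{-2}$, and equimeasurability, all pointing the right way) to reduce to radial functions on the ball $B_R$ with $|B_R|=|\Omega|$, the substitution $\xi=r^{-(N-2)/2}\phi$ to cancel the Hardy term exactly, and identification of the remaining quotient $\int_0^R r(\phi')^2\,dr\big/\int_0^R r\phi^2\,dr$ with the Rayleigh quotient of the two-dimensional disk, whose first Dirichlet eigenvalue is $H/R^2$. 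Your proof is correct, including the way you propose to justify the boundary term at the origin by first working in $C_0^\infty(\Omega\setminus\{0\})$ and passing to the limit by density (with Fatou on the Hardy term).
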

We consider only the case $\Omega=B_1$. We remark that the optimizer of the inequality (\ref{imp}) is $\xi=|x|^{\frac{2-N}{2}}\varphi$, where $\varphi>0$ is a first eigenfunction of the Laplacian in the unit ball in $N=2$ and it does not belong to $H^1_{0}(B_1)$. As mentioned in the introduction, the improved Hardy inequality plays a important role in studying the stability/instability of $U_h$. Thus, it is important to prove the optimality of the best constant $H$ for the test functions in $H^1_{0}(B_1)$. In other words, we find a test function $\hat{\xi}\in H^1_{0}(B_1)$ such that the inequality (\ref{imp}) does not hold for $\xi=\hat{\xi}$ if we replace $H$ with $H+\varepsilon$. To be more precise, we prove
\begin{proposition}
\label{improvedhardy2}
  Let $N\ge 2$ and let $\varepsilon:[0,1]\to [0,1]$ be a nondecreasing function such that $\varepsilon(\frac{1}{2})>0$. Then, there exists $\xi\in H^{1}_{0}(B_1)$ such that
\begin{equation*}
\int_{B_1}|\nabla \xi|^2\,dx< \frac{(N-2)^2}{4}\int_{B_1} \frac{\xi^2}{|x|^2}\,dx+ \int_{B_1} \left(H+\varepsilon (|x|)\right)\xi^2\,dx.\notag
\end{equation*}
\end{proposition}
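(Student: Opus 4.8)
The plan is to reduce the inequality, via the ground-state (Emden--Fowler) substitution, to a purely two-dimensional eigenvalue problem for the radial profile, and then to exhibit a carefully truncated copy of the two-dimensional first eigenfunction.

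First I would substitute $\xi=|x|^{(2-N)/2}v$. A direct integration by parts (legitimate for the test functions $v_\epsilon$ built below, which vanish near the origin and on $\partial B_1$) gives
\begin{equation}
\int_{B_1}|\nabla\xi|^2\,dx-\frac{(N-2)^2}{4}\int_{B_1}\frac{\xi^2}{|x|^2}\,dx=\int_{B_1}|x|^{-(N-2)}|\nabla v|^2\,dx,\notag
\end{equation}
while $\int_{B_1}\xi^2\,dx=\int_{B_1}|x|^{-(N-2)}v^2\,dx$. The crucial point is that for a radial profile $v=v(r)$ the weight cancels the Jacobian, $|x|^{-(N-2)}r^{N-1}=r$, so that with $c_N>0$ the surface area of the unit sphere,
\begin{equation}
\int_{B_1}|x|^{-(N-2)}|\nabla v|^2\,dx=c_N\int_0^1 v'(r)^2\,r\,dr,\qquad \int_{B_1}|x|^{-(N-2)}v^2\,dx=c_N\int_0^1 v(r)^2\,r\,dr.\notag
\end{equation}
These are exactly the Dirichlet energy and $L^2$-norm of a radial function on the two-dimensional disk. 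Hence it suffices to find a radial $v$ with $\xi=|x|^{(2-N)/2}v\in H^1_0(B_1)$ and $\int_0^1 v'^2\,r\,dr<\int_0^1\big(H+\varepsilon(r)\big)v^2\,r\,dr$.

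The natural candidate is $v=\varphi$, where $\varphi(r)=J_0(\sqrt{H}\,r)$ is the radial, positive first Dirichlet eigenfunction of the disk, so $\int_0^1\varphi'^2\,r\,dr=H\int_0^1\varphi^2\,r\,dr$. For this choice the difference above equals $-\int_0^1\varepsilon(r)\varphi(r)^2\,r\,dr$, which is \emph{strictly negative}: $\varepsilon\ge0$, $\varphi>0$ on $[0,1)$, and $\varepsilon(r)\ge\varepsilon(\tfrac12)>0$ on $[\tfrac12,1]$ by monotonicity. The obstruction is that $\varphi(0)>0$ forces $\int_0^1\varphi^2 r^{-1}\,dr=\infty$, i.e. $\int_{B_1}\xi^2/|x|^2=\infty$, so $|x|^{(2-N)/2}\varphi\notin H^1_0(B_1)$ when $N\ge3$. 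I must therefore cut $\varphi$ off near the origin while preserving the strict sign.

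The heart of the argument, and the step I expect to be the main obstacle, is that this is the borderline (two-dimensional) truncation, where a \emph{linear} cutoff fails: on $[\delta,2\delta]$ it contributes an energy $\int\varphi^2\chi'^2 r\,dr$ of order $\varphi(0)^2$, which does not vanish and would swamp the fixed gain $\int\varepsilon\varphi^2 r\,dr$. The remedy is a \emph{logarithmic} cutoff: fix $\rho\in(0,\tfrac12)$ and set, for small $\epsilon>0$,
\begin{equation}
\chi_\epsilon(r)=\begin{cases}0,&0\le r\le\epsilon,\\[2pt]\dfrac{\log(r/\epsilon)}{\log(\rho/\epsilon)},&\epsilon\le r\le\rho,\\[2pt]1,&\rho\le r\le1,\end{cases}\qquad v_\epsilon=\varphi\,\chi_\epsilon,\qquad \xi_\epsilon=|x|^{(2-N)/2}v_\epsilon.\notag
\end{equation}
Then $v_\epsilon$ vanishes for $r\le\epsilon$ and $v_\epsilon(1)=\varphi(1)=0$, so $\xi_\epsilon\in H^1_0(B_1)$ and the identity above applies. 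Letting $\epsilon\to0$ with $\rho$ fixed, dominated convergence gives $\int_0^1 v_\epsilon^2 r\,dr\to\int_0^1\varphi^2 r\,dr$ and $\int_0^1\varepsilon v_\epsilon^2 r\,dr\to\int_0^1\varepsilon\varphi^2 r\,dr$, while the only dangerous part of the Dirichlet energy is controlled by
\begin{equation}
\int_0^1\varphi^2\chi_\epsilon'^2\,r\,dr=\frac{1}{\log^2(\rho/\epsilon)}\int_\epsilon^\rho\frac{\varphi^2}{r}\,dr\le\frac{\|\varphi\|_\infty^2}{\log(\rho/\epsilon)}\longrightarrow0,\notag
\end{equation}
and the cross term is likewise $O(1/\log(\rho/\epsilon))\to0$; hence $\int_0^1 v_\epsilon'^2 r\,dr\to\int_0^1\varphi'^2 r\,dr$. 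Consequently the defect tends to $-\int_0^1\varepsilon\varphi^2 r\,dr<0$, so for all sufficiently small $\epsilon$ the function $\xi_\epsilon$ satisfies the required strict inequality. (When $N=2$ the Hardy term is absent and $v=\varphi$ works directly, so the truncation is needed only for $N\ge3$.)
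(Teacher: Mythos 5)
Your proposal is correct and follows essentially the same route as the paper: the ground-state substitution $\xi=|x|^{(2-N)/2}v$ reducing to the two-dimensional radial eigenvalue problem, the first Dirichlet eigenfunction $\varphi$ of the disk as the profile, and a logarithmic cutoff near the origin whose Dirichlet energy is $O(1/\log)$ (the paper uses $\phi_n=\min\{n/(1-\log r),1\}$ and absorbs the cross term via Young's inequality with a small parameter $\delta$, while you estimate the cross term directly, but this is a cosmetic difference). No gaps.
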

\begin{proof}
Let $\varepsilon>0$ and $n\in \mathbb{N}$. We define
    \begin{equation}
    \phi_n(x)=\min\{\frac{n}{1-\log |x|}, 1\}, \hspace{4mm} \xi_{n}(x)= \phi_{n}\varphi|x|^{\frac{2-N}{2}}\notag,
    \end{equation}
    where $\varphi>0$ is a first eigenfunction of the Laplacian in the unit ball in $N=2$. Then it holds 
    $\xi_{n}\in H^1_{0}(B_1)$. Moreover, by a direct computation, we have
    \begin{align}
       \int_{B_1}&|\nabla \xi_{n}|^2\,dx-\frac{(N-2)^2}{4}\int_{B_1} \frac{\xi_{n}^2}{|x|^2}\,dx- \int_{B_1} \left(H+\varepsilon(|x|) \right)\xi_{n}^2\,dx\notag\\
&=N\omega_{N}\int_{0}^{1}\left((\phi_{n}\varphi)'\right)^2 r-(N-2)\phi_{n}\varphi(\phi_{n}\varphi)'-\left(H+\varepsilon \right)(\phi_{n}\varphi)^2 r\,dr \notag\\
&=N\omega_{N}\int_{0}^{1}\left((\phi_{n}\varphi)'\right)^2 r -(H+\varepsilon )(\phi_{n}\varphi)^2 r\,dr \notag.
\end{align}
 Let $\delta>0$ be a constant which satisfies 
 \begin{equation}
 \label{yoikanzi}
      H\delta^2 \int^{1}_{0}\varphi^2 r\,dr \le\frac{1}{2} \int^{1}_{0}\varepsilon\varphi^2 r\,dr.
 \end{equation}
 
We note that $0<\phi_{n}\le 1$ for all $0<r<1$ and
$\phi_{n}=1$ for $r>e^{1-n}$. In addition, we note that it holds
\begin{equation*}
    \int_{0}^{1}(\varphi')^2 r\,dr= H\int_{0}^{1}\varphi^2 r\,dr.
\end{equation*}
By Young's inequality, 
 (\ref{yoikanzi}), and the above notation, we get
\begin{align}
&\int_{0}^{1}\left((\phi_{n}\varphi)'\right)^2 r -(H+\varepsilon)(\phi_{n}\varphi)^2 r\,dr \notag\\
&\hspace{2mm}\le(1+\delta^2)\int_{0}^{1} \phi_{n}^2(\varphi')^2 r\,dr-\int_{0}^{1}(H+\varepsilon)(\phi_{n}\varphi)^2 r\,dr+\left(1+\frac{1}{\delta^2}\right)\int_{0}^{1} (\phi_{n}')^2\varphi^2 r\,dr\notag\\
&\hspace{2mm}\le H(1+\delta^2)\int_{0}^{1}\varphi^2 r\,dr-\int_{e^{n-1}}^{1}(H+\varepsilon)\varphi^2 r\,dr+\left(1+\frac{1}{\delta^2}\right)\int_{0}^{1} (\phi_{n}')^2\varphi^2 r\,dr\notag\\
&\hspace{2mm}\le \int_{0}^{e^{1-n}} \left(\left(1+\frac{1}{\delta^2}\right)\frac{n^2}{r(1-\log r)^4}+(H+\varepsilon)r\right)\varphi^2\,dr\notag\\
&\hspace{62mm}+H\delta^2\int_{0}^{1}\varphi^2 r\,dr -\int_{0}^{1}\varepsilon\varphi^2 r\,dr\notag\\
&\hspace{2mm}\le \left(\frac{1}{3n}\left(1+\frac{1}{\delta^2}\right)+\frac{(H+1)e^{2-2n}}{2}\right)\lVert \varphi\rVert_{L^{\infty}(B_1)}^2-\frac{1}{2} \int_{e^{1-n}}^{1}\varepsilon\varphi^2 r\,dr\notag\\
&\hspace{2mm}\to  -\frac{1}{2} \int_{0}^{1}\varepsilon\varphi^2 r\,dr<0\hspace{4mm}\text{as $n\to \infty$.}\notag
\end{align}

Thus, by letting $n$ sufficiently large, the result holds for $\xi=\xi_{n}$.
\end{proof}

In preparation for proving Theorem \ref{singularthm}, we quote the following proposition that describes the relationship between the bifurcation structure and the stability of radial singular solutions.
\begin{proposition}[see \cite{Br,B}]
    \label{brezis V}
Let $(\lambda_{*}, U_{*})$ be a singular solution of (\ref{gelfand}) such that $U_{*}\in H^1_{0}(B_1)$ and $U_{*}$ is stable. Then, $U_{*}$ is the extremal solution for $\lambda_{*}$. In particular, the bifurcation diagram is of Type II.
\end{proposition}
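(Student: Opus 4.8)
The plan is to show that the stability hypothesis forces $U_*$ to coincide with the minimal branch, so that $\lambda_* = \lambda^*$ and $U_* = u^*$, and then to deduce the Type II conclusion by a connectedness argument along the curve $\mathcal{C}$.

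First I would check that $U_*$ is an admissible weak solution in the sense of Subsection \ref{applisub}. By \eqref{asymptotic behavior} we have $a(|x|)e^{U_*} \sim 2(N-2)\lambda_*^{-1}|x|^{-2}$ near the origin, so that $a(|x|)e^{U_*}\mathrm{dist}(\cdot,\partial B_1) \in L^1(B_1)$ whenever $N\ge 3$, while $U_* \in L^1(B_1)$ is immediate from $U_*\in H^1_0(B_1)$; multiplying \eqref{gelfand} by a test function, integrating by parts on $B_1\setminus B_\varepsilon$ and letting $\varepsilon\to 0$ then shows that $U_*$ is a weak solution at $\lambda_*$. Since no weak solution exists for $\lambda>\lambda^*$, this already gives $\lambda_* \le \lambda^*$.

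The key step is to prove that $U_*$ is the minimal solution at $\lambda_*$. Assuming first $\lambda_* < \lambda^*$, let $u_{\lambda_*}$ be the (classical, bounded) minimal solution; by minimality $w := U_* - u_{\lambda_*} \ge 0$, and $w \in H^1_0(B_1)$ since $U_* \in H^1_0(B_1)$ by hypothesis. Writing $g(x,t) := \lambda_* a(|x|)e^t$, convexity of $t \mapsto e^t$ gives the pointwise bound $g(x,U_*) - g(x,u_{\lambda_*}) \le g_t(x,U_*)\,w$, so that testing $-\Delta w = g(x,U_*) - g(x,u_{\lambda_*})$ against $w$ yields $\int_{B_1}|\nabla w|^2\,dx \le \int_{B_1} g_t(x,U_*)w^2\,dx$. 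On the other hand, the stability of $U_*$ is precisely $Q_{U_*}(\xi) = \int_{B_1}|\nabla \xi|^2\,dx - \int_{B_1}g_t(x,U_*)\xi^2\,dx \ge 0$, which remains valid for $\xi=w$ by density because all integrals converge (again using \eqref{asymptotic behavior} and $N\ge 3$). Combining the two inequalities forces equality throughout, and strict convexity of the exponential then forces $w \equiv 0$, i.e. $U_* = u_{\lambda_*}$. This contradicts the fact that $U_*$ is singular while $u_{\lambda_*}$ is bounded; hence $\lambda_* = \lambda^*$, and the same comparison against the extremal solution $u^*$ gives $U_* = u^*$. I expect this to be the main obstacle, since it is where the stability hypothesis and the convexity of the nonlinearity must be used in tandem and where one must justify that $w$ is an admissible competitor in $Q_{U_*}$; it is essentially Martel's uniqueness argument for semistable solutions (see \cite{Br,B}).

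Finally I would deduce Type II by a connectedness argument. By Theorem \ref{diagramth} the map $\beta \mapsto (\lambda(\beta),\alpha(\beta))$ is an injective analytic parametrization of the full solution set $\mathcal{C}$ (injectivity follows from $\alpha(\beta)+\log\lambda(\beta)=\beta$), so $\mathcal{C}$ is a simple arc. The minimal branch $\{(\lambda,\alpha(\lambda)) : 0 < \lambda < \lambda^*\}$ is a connected subset of $\mathcal{C}$ on which both $\lambda$ and $\alpha$ are strictly increasing; since $u_\lambda \uparrow u^* = U_*$ as $\lambda \uparrow \lambda^*$ and $U_*$ is singular, we have $\alpha(\lambda) = u_\lambda(0) \to \infty$, which by Theorem \ref{singularth} corresponds to $\beta \to \infty$. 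Thus the set of $\beta$ parametrizing the minimal branch is an interval unbounded above, and since it also contains a neighbourhood of the origin (where $\mathcal{C}$ coincides with the minimal branch, as shown in the proof of Theorem \ref{singularth}) it is unbounded below as well, hence all of $\mathbb{R}$. Therefore $\mathcal{C}$ coincides with the minimal branch, on which $\lambda(\beta)$ is strictly increasing to $\lambda_* = \lambda^*$; in particular $\mathcal{C}$ has no turning point and the bifurcation diagram is of Type II.
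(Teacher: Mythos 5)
Your proposal is correct and follows essentially the same route as the paper, which does not write out a proof but explicitly defers to the Brezis--V\'azquez argument via \cite[Corollary 2]{B}: verify that $U_*$ is an admissible weak solution (so $\lambda_*\le\lambda^*$), rule out $\lambda_*<\lambda^*$ by the stability-plus-convexity comparison with the minimal solution, identify $U_*$ with $u^*$, and conclude Type II because the minimal branch then exhausts the whole curve $\mathcal{C}$. The only point you leave implicit is that $u^*\in H^1_0(B_1)$ (needed so that $w=U_*-u^*$ is an admissible competitor in the final comparison), which follows routinely from the uniform bound $\int_{B_1}|\nabla u_\lambda|^2\,dx=\lambda\int_{B_1}a e^{u_\lambda}u_\lambda\,dx\le \lambda^*\int_{B_1}a e^{U_*}U_*\,dx<\infty$.
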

We remark that the authors prove Proposition \ref{brezis V} in \cite{Br} by applying \cite[Corollary 2]{B} when $a=1$. As the same way, we can prove it for the weighted case. Moreover, we remark that the extremal solution of \eqref{gelfand} is radially symmetric and stable (see Subsection \ref{applisub}). Thus, thanks to Theorem \ref{singularth} and this proposition, we verify that the bifurcation diagram is of Type II if and only if the radial singular solution is stable.

\begin{proof}[Proof of Theorem \ref{singularthm}.]
By a direct computation, we verify that $U_{h}$ is the radial singular solution of \eqref{gelfand} for $a=a_{h}$ and $\lambda= \lambda_{h}$. Moreover, we verify that the linearized operator $-\Delta - a_{h} e^{U_h}$ is equal to $-\Delta-2(N-2)|x|^{-2}-h$ and it holds
\begin{align}
\label{qnohyouzi}
  Q_{U_h}(\xi)= 
    \int_{B_1}|\nabla \xi|^2\,dx- 2(N-2)\int_{B_1} \frac{\xi^2}{|x|^2}\,dx- h \int_{B_1} \xi^2\,dx
\end{align}
for all $\xi\in H^1_{0}(B_1)$. We claim that $U_h$ is stable if and only if $m(U_h)=0$. Indeed, we denote by $\xi^*\in H^1_{0}(B_1)$ the symmetric  
rearrangement of $\xi$. Then it follows by Lemma \ref{apenlem1}
that $Q_{U_h}(\xi^*)\le Q_{U_h}(\xi)$. Thus, we get the claim.
\vspace{5pt}

\textbf{Case 1.} \textit{The case of $N\le 9$.}

In this case, we use a similar method used in \cite{Mi2014}.
Let $\varepsilon>0$ be a small constant 
such that 
\begin{equation*}
    \delta:=2(N-2)-\frac{(N-2)^2+\varepsilon^2}{4}>0
\end{equation*}
and we define $\xi_{j}(r)=r^{\frac{2-N}{2}} \sin(\frac{\varepsilon}{2}\log r) \chi_{[r_{j+1},r_j]}$ with $r_j=e^{-2\pi j/\varepsilon}$. Then, it holds $\xi_{j}\in H^1_{0}(B_1)$ and it satisfies
\begin{equation*}
    -\Delta \xi_j=\left(\frac{(N-2)^2+\varepsilon^2}{4}\right)r^{-2} \xi_j\hspace{4mm}\text{for all $r\in (r_{j+1},r_j)$}.
\end{equation*}
Thus, we get 
\begin{align*}
    Q_{U_h}(\xi_j)\le |h|\int_{B_{r_{j}}}\xi_{j}^2\,dx-\delta
    \int_{B_{r_j}}\frac{\xi_{j}^2}{|x|^2}\,dx<0
\end{align*}
for all $j$ sufficiently large. Since $\mathrm{supp}(\xi_j)\cap \mathrm{supp}(\xi_k)=\phi$ if $j\neq k$, we have $m(U_h)=\infty$ for all $h>-2(N-2)$.

\vspace{5pt}

\noindent
\textbf{Case 2.} \textit{The case of $N=10$.}
 
 We first point out that $2(N-2)=\frac{(N-2)^2}{4}$. Thus, thanks to Proposition \ref{improved hardy} and Proposition \ref{improvedhardy2}, we verify that 
 $U_h$ is stable if and only if $h\le H$. As a result, the bifurcation diagram is of Type II if and only if $h\le H$. Moreover, we have $m(U_h)=0$ if $h\le H$ and $m(U_h)\ge 1$ if $h>H$. 

Thus, it suffices to prove that $m(U_{h})<\infty$. We define $X\subset H^1_{0,\mathrm{rad}}(B_1)$ as a maximal space such that $Q_{U_h}(\xi)<0$ for all $\xi\in X\setminus \{0\}$ and we define $Y\subset H^1_{0,\mathrm{rad}}(B_1^2)$ as a maximal space such that $<\left(-\Delta-h\right)\xi,\xi>_{L^2(B_1^2)}<0$ for all $\xi \in Y\setminus \{0\}$, where $B_1^2$ is the 2-dimensional unit ball. We consider the linear map $\Lambda:X\to Y$ defined for $\xi\in X$ by $\Lambda \xi(|x|)=|x|^{\frac{2-N}{2}}\xi(|x|)$. Since the map is injective, it holds $\dim X\le \dim Y$. We consider
the eigenvalue problem
\begin{equation*}
    (-\Delta-h) u = \mu u \hspace{4mm}\text{in $B^2_{1}$}, \hspace{4mm} u\in H^1_{0,\rm{rad}}(B_1^{2}).
\end{equation*}
Then, it is well-known that there are countable
eigenvalues of this equation and each of them is discrete. Since $\dim Y$ is equal to the number of negative eigenvalues for the above equation (see \cite[Proposition 1.5.1]{Dup} for example), it holds $\dim Y<\infty$ and thus we get the result.
\end{proof}

\section{Comparison results}
\label{separate sec}
In this section, we prove comparison results. From now on, we deal with the case where the weight $a$ is more general. The comparison results play an important role in studying the stability of $U_{*}$. We first introduce the following 

\begin{proposition}
\label{comparison3}
Assume that $N=10$ and $a(r)$ satisfies $(A)$ and 
\begin{equation}
\label{atarimae}
(a/a_{h})'\le 0,\hspace{4mm}0<r<1
\end{equation}
for some $h>0$. Then, we have
\begin{equation}
    v(r,\beta)+\log a(r) <v_{h}(r,\gamma) + \log a_{h}(r),\hspace{4mm} 0<r<r_h\notag
\end{equation}
for all $\gamma>\beta$, where $r_h=\min\{1,\sqrt{H/h}\}$.
\end{proposition}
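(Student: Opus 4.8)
The plan is to reduce the inequality to the positivity on $(0,r_h)$ of the difference
\begin{equation*}
d(r):=\bigl(v_h(r,\gamma)+\log a_h(r)\bigr)-\bigl(v(r,\beta)+\log a(r)\bigr)
\end{equation*}
and to exclude an interior zero of $d$ by a Sturm--Wronskian comparison against the Bessel kernel of the model singular solution $V_h$, whose first zero sits exactly at $r_h$. First I would pass to the variables $z:=v(\cdot,\beta)+\log a$ and $z_h:=v_h(\cdot,\gamma)+\log a_h$. Using \eqref{odev}, the equation $-\Delta v=ae^{v}$ becomes $-\Delta z=e^{z}-\Delta\log a$, and likewise for $z_h$; subtracting and applying the mean value theorem to $e^{z_h}-e^{z}$ gives
\begin{equation*}
-\Delta d=c(r)\,d+\Delta g,\qquad c(r):=\frac{e^{z_h}-e^{z}}{z_h-z}=e^{\zeta}>0,\quad g:=\log(a/a_h),
\end{equation*}
where $\zeta$ lies strictly between $z$ and $z_h$. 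Since $a(0)=a_h(0)=1$ and \eqref{atarimae} gives $(a/a_h)'\le0$, we have $g(0)=0$, $g'\le0$, hence $g\le0$ on $(0,1)$; moreover $d(0^{+})=\gamma-\beta>0$ and $d'(0^{+})=0$. Thus it suffices to show that $d$ has no zero on $(0,r_h)$.

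The comparison function comes from the model singular solution $V_h$ of \eqref{singulareq} for $a=a_h$, which (as in the linearisation computed in the proof of Theorem \ref{singularthm}) satisfies $a_he^{V_h}=\frac{2(N-2)}{r^2}+h=:c_*(r)$. Because $N=10$ gives $2(N-2)=(N-2)^2/4$, the substitution $\Psi=r^{(2-N)/2}\eta$ used there turns $-\Delta\Psi=c_*\Psi$ into the two-dimensional eigenvalue equation $-\Delta\eta=h\eta$, so that $\Psi(r)=r^{(2-N)/2}J_0(\sqrt{h}\,r)$ solves $-\Delta\Psi=c_*\Psi$ on $(0,r_h)$. Since the first zero of $J_0$ occurs at $\sqrt{H}$, where $H$ is the first Dirichlet eigenvalue of Proposition \ref{improved hardy}, the choice $r_h=\min\{1,\sqrt{H/h}\}$ guarantees $\Psi>0$ and $\Psi'\le0$ on $(0,r_h)$; this is precisely where the improved Hardy constant enters.

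Now suppose, for contradiction, that $r_1\in(0,r_h)$ is the first zero of $d$, so $d>0$ on $(0,r_1)$, $d(r_1)=0$, and $d'(r_1)\le0$. Integrating the Wronskian identity
\begin{equation*}
\int_0^{r_1}r^{N-1}\bigl(\Psi\Delta d-d\,\Delta\Psi\bigr)\,dr=\bigl[r^{N-1}(\Psi d'-d\Psi')\bigr]_0^{r_1},
\end{equation*}
the boundary contribution at $0$ vanishes (by the regularity of $d$ and the $r^{(2-N)/2}$ growth of $\Psi$), and at $r_1$ it equals $r_1^{N-1}\Psi(r_1)d'(r_1)\le0$. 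On the other hand, inserting $\Delta d=-cd-\Delta g$ and $\Delta\Psi=-c_*\Psi$, and integrating the $\Delta g$-term by parts against $r^{N-1}\Psi$ via $r^{N-1}\Delta g=(r^{N-1}g')'$, the left-hand side equals
\begin{equation*}
\int_0^{r_1}r^{N-1}(c_*-c)\Psi\,d\,dr\;-\;r_1^{N-1}\Psi(r_1)g'(r_1)\;+\;\int_0^{r_1}r^{N-1}\Psi'g'\,dr.
\end{equation*}
Here $g'\le0$ and $\Psi(r_1)>0$ make the middle term $\ge0$, while $\Psi'\le0$ and $g'\le0$ make the last integrand $\ge0$; finally $c<c_*$ on $(0,r_1)$, because there $\zeta<z_h\le V_h+\log a_h$ forces $c=e^{\zeta}<a_he^{V_h}=c_*$, so the first integral is strictly positive. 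Thus the left-hand side is $>0$ while the right-hand side is $\le0$, a contradiction. Hence $d$ has no zero in $(0,r_h)$, and since $d(0^{+})>0$ we conclude $d>0$ on $(0,r_h)$, which is the assertion.

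The step I expect to be the main obstacle is the coefficient bound $c<c_*$, that is, the ordering $v_h(\cdot,\gamma)\le V_h$ of the regular solution below the explicit model singular solution on $(0,r_h)$. I would establish it by the \emph{same} Sturm comparison: the linearised potential along $V_h-v_h$ has the form $a_he^{\theta}$ with $\theta<V_h$, hence is strictly below $c_*$, so the solution $V_h-v_h$, singular at the origin, cannot vanish before the first zero $r_h$ of $\Psi$. Carrying this out carefully---with both functions singular at $r=0$---and verifying that it does not circularly presuppose the conclusion is the delicate point; once it is in place, the forcing $\Delta g$ is tamed by the single integration by parts above using only $(a/a_h)'\le0$.
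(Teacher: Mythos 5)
Your argument is correct, but it takes a genuinely different route from the paper's. The paper also reduces the claim to excluding a first zero $R\in(0,r_h)$ of $w_1=d$ via a Green's-identity sign argument, and it also handles the forcing $\Delta\log(a_h/a)$ by exactly one integration by parts using $(a/a_h)'\le 0$ together with the monotonicity of the comparison function; but its comparison function is not your explicit Bessel supersolution $\Psi=r^{(2-N)/2}J_0(\sqrt{h}\,r)$. Instead the paper sets $w_0=v_h(\cdot,\gamma)-v_h(r_h,\gamma)$ and produces $w_2=X-w_0>0$, where $X$ is a solution of the auxiliary Dirichlet problem $-\Delta X=\mu a_h e^{X}$ in $B_{r_h}$ for some $\mu>e^{v_h(r_h,\gamma)}$; the existence of such an $X$ above $w_0$ comes from showing that this auxiliary problem is of Type II (its singular solution $V_h-V_h(r_h)$ is stable by the improved Hardy inequality, so the whole branch is minimal and increasing in $\mu$), and the monotonicity $w_2'\le 0$ comes from superharmonicity. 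The spectral input is the same in both proofs --- at $N=10$ one has $2(N-2)=(N-2)^2/4$ and the constant $H$ of Proposition \ref{improved hardy} --- but you encode it as the location $\sqrt{H/h}$ of the first zero of $J_0(\sqrt{h}\,\cdot)$, while the paper encodes it as stability of the auxiliary singular solution. Your route is more explicit and avoids invoking the minimal-branch machinery and Proposition \ref{brezis V}, at the price of the extra ordering $v_h(\cdot,\gamma)<V_h$ on $(0,r_h)$ needed for $c<c_*$; the preliminary Sturm comparison you sketch for it does close (the boundary term at the origin vanishes since $r^{N-1}\Psi D'=O(r^{N/2-1})$ and $r^{N-1}D\Psi'=O(r^{N/2-1}\log(1/r))$, and the potential $a_he^{\theta}$ with $\theta<V_h$ on $\{D>0\}$ is strictly below $c_*$ without presupposing the main claim), whereas the paper gets the analogous ordering $w_0<X$ for free from the ordering of minimal solutions. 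The paper's template has the advantage of transferring verbatim to Proposition \ref{comparison2} and Proposition \ref{comparison1}, where the weight is a general $a$ and no explicit supersolution is available.
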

We briefly explain the idea of the proof. We introduce the following comparison result provided in \cite{Gui,Guini}.
\begin{lemma}[see \cite{Gui,Guini}]
\label{guilem}
Let $R>0$ and let $k(|x|)\in C^0(B_R)$ be a nonnegative radial function. We assume that $w_1(|x|)\in C^2(B_R)$ and $w_2(|x|)\in C^2(B_R)$ are radial functions which satisfy
\begin{equation*}
-\Delta w_1\le k(x)w_1 \hspace{4mm}\text{in $B_R\cap \{w_1>0\}$}
\end{equation*}
and
\begin{equation*}
-\Delta w_2 \ge k(x)w_2 \hspace{4mm}\text{in $B_R$}.
\end{equation*}
Moreover, we assume that $w_1(0)>0$ and $w_2>0$ in $B_{R}$. Then, we have $w_1>0$
in $B_R$.
\end{lemma}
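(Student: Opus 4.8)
\textbf{Proof proposal for Lemma \ref{guilem}.}

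The plan is to prove the positivity of $w_1$ on $B_R$ by a maximum-principle-type argument applied to the quotient $w_1/w_2$, exploiting that $w_2>0$ throughout $B_R$. First I would set $\psi := w_1/w_2$, which is a well-defined radial $C^2$ function on $B_R$ since $w_2>0$ there. The hypothesis $w_1(0)>0$ gives $\psi(0)>0$. The goal is reduced to showing $\psi>0$ on all of $B_R$. Suppose, for contradiction, that $\psi$ fails to be positive somewhere; since $\psi(0)>0$ and $\psi$ is continuous and radial, there is a first radius $0<r_0\le R$ at which $\psi(r_0)=0$ while $\psi>0$ on $[0,r_0)$. In particular $w_1(r_0)=0$ and $w_1>0$ on $B_{r_0}$, so the differential inequality $-\Delta w_1\le k w_1$ is valid on the whole ball $B_{r_0}$ (this is exactly where the awkward clause ``in $B_R\cap\{w_1>0\}$'' is used: on $B_{r_0}$ we are inside the positivity set).

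Next I would derive the contradiction from a Picone/Sturmian comparison between $w_1$ and $w_2$ on $B_{r_0}$. Multiplying $-\Delta w_1\le k w_1$ by $w_2$ and $-\Delta w_2\ge k w_2$ by $w_1$, subtracting, and using radial coordinates so that $\Delta = r^{1-N}\frac{d}{dr}\bigl(r^{N-1}\frac{d}{dr}\bigr)$, one obtains
\begin{equation*}
\frac{d}{dr}\Bigl(r^{N-1}\bigl(w_1' w_2 - w_1 w_2'\bigr)\Bigr)\ge 0 \qquad\text{on }(0,r_0).
\end{equation*}
Thus the Wronskian-type quantity $W(r):=r^{N-1}\bigl(w_1' w_2 - w_1 w_2'\bigr)$ is nondecreasing on $(0,r_0)$. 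Since $w_1,w_2$ are $C^2$ and radial, $w_1'(0)=w_2'(0)=0$, so $W(0)=0$ and hence $W(r)\ge 0$ on $(0,r_0)$. Rewriting, $r^{N-1}w_2^2\,\psi'(r)=W(r)\ge 0$, so $\psi$ is nondecreasing on $(0,r_0)$. But $\psi(0)>0$ forces $\psi(r_0)\ge\psi(0)>0$, contradicting $\psi(r_0)=0$. Therefore no such $r_0$ exists and $\psi>0$, i.e. $w_1>0$, on $B_R$.

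The main obstacle I anticipate is the behavior at the origin: justifying $W(0)=0$ and the monotonicity of $\psi$ near $r=0$ requires care, because although $w_1,w_2\in C^2$ give $w_i'(0)=0$, the factor $r^{N-1}$ degenerates and one must confirm that $W$ is genuinely nondecreasing up to the boundary point $0$ rather than merely on compact subintervals. This is handled by integrating the differential inequality for $\frac{d}{dr}W$ from $0$ to $r$ and using the regularity of $w_1,w_2$ to control the integrand near the origin, so that the boundary term at $r=0$ vanishes. A secondary point worth stating explicitly is that the argument only needs the inequality $-\Delta w_1\le k w_1$ where $w_1>0$, which is precisely the content of the hypothesis and is available on each sub-ball $B_{r_0}$ by the choice of $r_0$ as the first zero; this is what makes the one-sided hypothesis on $w_1$ sufficient. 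The nonnegativity of $k$ is used only to keep the signs consistent when comparing the two differential inequalities, and I would note where it enters.
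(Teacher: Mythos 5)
Your proof is correct, and it is essentially the argument the paper relies on: the paper itself states Lemma \ref{guilem} without proof (citing Gui and Gui--Ni), but your device — assume a first zero $r_0$ of $w_1$, use the hypothesis on the positivity set $B_{r_0}$, and derive a sign contradiction from the Wronskian $W(r)=r^{N-1}(w_1'w_2-w_1w_2')$ — is exactly the radial form of the Green's-identity computation the paper carries out when it adapts this lemma in Propositions \ref{comparison3}, \ref{comparison2}, and \ref{comparison1} (integrating your inequality $W'\ge 0$ over $(0,r_0)$ is the same as applying Green's identity on $B_{r_0}$, with the boundary term at the origin vanishing by $w_i'(0)=0$ and the factor $r^{N-1}$, as you note). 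One small correction: in your computation the nonnegativity of $k$ is never actually used — after multiplying the two differential inequalities by $w_2>0$ and $w_1>0$ respectively, the terms $\pm r^{N-1}k\,w_1w_2$ cancel identically, so only the positivity of $w_1,w_2$ on $(0,r_0)$ matters; your closing remark that $k\ge 0$ ``keeps the signs consistent'' is therefore misleading (the hypothesis is inherited from the cited references, not needed here). Also, since $B_R$ is open, the first zero satisfies $r_0<R$ rather than $r_0\le R$, but this changes nothing in the argument.
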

We define $k(r)=e^{v_h(r,\gamma)+\log a_h}$ and we denote by $w_1$ that in \eqref{kiee}.
Then, we wish to provide the function $w_2$ and apply Lemma \ref{guilem}. In \cite{Gui, Guini}, the authors focused 
on the equation for which the separation result holds (i.e., no two solutions intersect). As a result, they constructed $w_2$ as the difference between the two solutions. We note that this idea was 
applied by many authors (see e.g.\cite{Baeni, Bae, Mi2015}).

In this case, we point out that the singular solution $V_h$ plays an important role in providing $w_2$. Indeed, we verify that $V_{h}-V_{h}(r_{h})$ is a stable solution of \eqref{comp11}. The fact means that the bifurcation diagram of \eqref{comp11} is of Type II. As a consequence, we construct $w_2$ as the difference between two solutions for \eqref{comp11}. Moreover, we point out that we cannot apply Lemma \ref{kiee} directly since we do not know the sign of $-\Delta (\log a_h -\log a)$. By combining the idea of the proof of Lemma \ref{guilem} with Green's first identity, we get the result
from the assumption (\ref{atarimae}) only. 
\begin{proof}[Proof of Proposition \ref{comparison3}]
    Let $\gamma>\beta$ and we define
\begin{align}
\label{kiee}
w_1(r):=v_{h}(r,\gamma)+\log a_{h} -v(r,\beta)-\log a
\end{align}
and
\begin{align}
w_0(r):=v_h(r,\gamma)-v_h(r_h,\gamma).\notag
\end{align}
Then, we claim that there exist $\mu>e^{v_h(r_h,\gamma)}$ and a stable solution of
\begin{align}
\label{comp11}
  \left\{
\begin{alignedat}{4}
 -\Delta X&= \mu a_h(|x|)e^{X} &\hspace{2mm} &\text{in } B_{r_h},\\
\ X&> 0 & &\text{in $B_{r_h}$},\\
X&=0 &&\text{on $B_{r_h}$}
\end{alignedat}
\right.
\end{align}
such that $w_0<X$ in $B_{r_h}$. Indeed, we easily know that $w_0$ is a solution of \eqref{comp11} for $\mu=e^{v_h(r_h,\gamma)}$. Moreover, we verify that $V_h -V_h(r_h)$ is the radial singular solution of \eqref{comp11} for $\mu=e^{V_{h}(r_h)}$. Thanks to Proposition \ref{improved hardy}, the singular solution is stable. Thus, we verify that the bifurcation diagram of \eqref{comp11} is of Type II. It means that the bifurcation curve has only the stable branch and thus we get the claim.

We define 
\begin{equation*}
    w_2(r)=X(r)-w_0(r).
\end{equation*}
Then, it holds $w_2(r)>0$ for $0<r<r_h$.
Suppose the contrary, i.e., we assume that there exists $0<R<r_h$ such that $w_1(r)>0$ for $0<r<R$ and $w_1(R)=0$. Then, we have $w_1 '(R)\le 0$. By a direct calculation, it follows
\begin{align}
&-\Delta w_1 = e^{v_{h}(r,\gamma)+\log a_{h}}-e^{v(r,\beta)+\log a}-\Delta\left(\log a_h-\log a\right)\notag\\
&\hspace{18mm}\le e^{v_{h}(r,\gamma)+\log a_{h}}w_1-\Delta\left(\log a_h-\log a\right),\notag\\
&-\Delta w_2= e^{X+\log \left(\mu a_h\right)}-e^{v_h(r,\gamma)+\log a_h}\ge(\log\mu-v_h(r_h,\gamma)+w_2)e^{v_{h}(r,\gamma)+\log a_h}\notag
\end{align}
for all $0<r<R$. In particular, since $w_2$ is superharmonic, we have $w_2'\le 0$ for all $0<r<R$ (see Lemma \ref{rough lemma}).
Moreover, by the assumption \eqref{atarimae}, it holds $(\log a_h-\log a)'\ge 0$ in $B_1$.
Thus, it follows by Green's identity that
\begin{align}
    \omega_{N}R^N w_1 '(R) &w_2(R)
    = \int_{B_R}w_2\Delta w_1-w_1\Delta w_2\,dx\notag\\
    &=\int_{B_R}\log \left(\frac{\mu}{e^{v_h(r_h,\gamma)}}\right) e^{v_{h}(r,\gamma)+\log a_h}w_1+\Delta\left(\log a_h -\log a\right)w_2\,dx\notag\\
    &>\int_{\partial B_R} 
    \left(\nabla\left(\log a_h-\log a\right)\cdot\frac{x}{R}\right)w_2\,d\mathcal{H}^{N-1}\notag\\
    &\hspace{10mm}-\int_{B_R} \nabla \left(\log a_h-\log a\right)\cdot\nabla w_2\,dx\notag\\
    &\ge 0\notag,
\end{align}
which contradicts that $w_1'(R)\le 0$ and $w_2(R)>0$.
\end{proof}
Proposition \ref{comparison3} implies the separation property of $v$ in $B_{r_h}$. To be more precise, the following proposition holds.
\begin{proposition}
\label{comparison2}
Assume that $N=10$ and $a(r)$ satisfies $(A)$ and \eqref{atarimae} for some $h>0$.
Then, we have
\begin{equation}
    v(r,\beta)<v(r,\gamma),\hspace{4mm} 0<r<r_h\notag
\end{equation}
for all $\gamma>\beta$, where $r_h$ is that in Proposition \ref{comparison3}.
\end{proposition}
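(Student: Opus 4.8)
The plan is to deduce Proposition \ref{comparison2} from Proposition \ref{comparison3} by an elementary algebraic manipulation, using only the inequality already established there together with the assumption (\ref{atarimae}). Recall that Proposition \ref{comparison3} gives, for all $\gamma>\beta$,
\begin{equation*}
v(r,\beta)+\log a(r) < v_h(r,\gamma)+\log a_h(r), \hspace{4mm} 0<r<r_h.
\end{equation*}
The key observation is that the same proposition applies with $\gamma$ in the role of $\beta$: since the hypothesis of Proposition \ref{comparison3} is a condition on $a$ alone (not on any particular parameter value), and since we may take the comparison parameter strictly larger than $\gamma$, I would apply it a second time to pass from $v(\cdot,\gamma)$ to $v_h$. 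The first step, therefore, is to record both inequalities
\begin{equation*}
v(r,\beta)+\log a(r) < v_h(r,\gamma')+\log a_h(r)
\end{equation*}
for a suitable auxiliary parameter $\gamma'$, and simultaneously a lower bound comparing $v(r,\gamma)$ with $v_h$.

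The cleaner route is to run the argument of Proposition \ref{comparison3} itself in the reverse direction. First I would show that, just as $v+\log a$ lies \emph{below} $v_h+\log a_h$, one also has a matching lower bound expressing that $v(\cdot,\gamma)$ cannot fall below $v(\cdot,\beta)$ on $(0,r_h)$. Concretely, set
\begin{equation*}
w(r):= v(r,\gamma)-v(r,\beta),
\end{equation*}
which satisfies $w(0)=\gamma-\beta>0$ and, by subtracting the equations (\ref{odev}) for the two parameters,
\begin{equation*}
-\Delta w = a(r)\bigl(e^{v(r,\gamma)}-e^{v(r,\beta)}\bigr).
\end{equation*}
By the mean value theorem the right-hand side equals $a(r)e^{\theta(r)}w$ for some $\theta$ between $v(r,\beta)$ and $v(r,\gamma)$, so $w$ solves a linear equation $-\Delta w = k(r)w$ with a nonnegative coefficient $k$ on $\{w>0\}$. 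This is exactly the setting of Lemma \ref{guilem} with $w_1=w$: I have $w_1(0)>0$ and need a positive supersolution $w_2$.

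The main obstacle, and the only place where real work is required, is supplying the supersolution $w_2$ on all of $B_{r_h}$. The natural candidate is the quantity already produced in the proof of Proposition \ref{comparison3}, namely $w_2=X-w_0$ where $X$ is the stable solution of (\ref{comp11}) and $w_0(r)=v_h(r,\gamma)-v_h(r_h,\gamma)$; that function was shown there to be positive on $(0,r_h)$ and superharmonic, and on $\{w>0\}$ the coefficient $k(r)=a(r)e^{\theta(r)}$ is controlled by $e^{v_h(r,\gamma)+\log a_h}$ using (\ref{atarimae}) together with Proposition \ref{comparison3}, which forces $\theta(r)\le v(r,\gamma)+\log a(r)<v_h(r,\gamma)+\log a_h(r)$. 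Thus $-\Delta w_2\ge k\,w_2$ holds, Lemma \ref{guilem} applies, and I conclude $w>0$, i.e. $v(r,\beta)<v(r,\gamma)$ on $(0,r_h)$. I would keep in mind that the role of $r_h=\min\{1,\sqrt{H/h}\}$ is precisely to guarantee, via the improved Hardy inequality of Proposition \ref{improved hardy}, that the singular solution $V_h-V_h(r_h)$ of (\ref{comp11}) is stable and hence that the needed supersolution exists on $B_{r_h}$; the restriction to this radius is therefore not an artifact but essential to the construction.
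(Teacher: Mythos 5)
Your argument is correct, and its skeleton is the paper's: set $w_1=v(\cdot,\gamma)-v(\cdot,\beta)$, produce a positive supersolution $w_2$ on $B_{r_h}$ from the stable branch of an auxiliary Gelfand problem, and conclude via Lemma \ref{guilem} (the paper inlines this lemma as a Green's-identity contradiction, but that is the same mechanism). The one genuine difference is where $w_2$ comes from. You recycle the supersolution already built in the proof of Proposition \ref{comparison3}, i.e.\ $w_2=X-\bigl(v_h(\cdot,\gamma')-v_h(r_h,\gamma')\bigr)$ for the $a_h$-problem \eqref{comp11}; this obliges you to dominate the linearized coefficient, $a e^{\theta}\le a e^{v(\cdot,\gamma)}\le a_h e^{v_h(\cdot,\gamma')}$, which is a second application of Proposition \ref{comparison3} and requires a strictly larger auxiliary parameter $\gamma'>\gamma$ (you flag this, but your final displayed chain writes $v(r,\gamma)+\log a<v_h(r,\gamma)+\log a_h$ with the same $\gamma$ on both sides, which is not literally what Proposition \ref{comparison3} asserts --- do introduce $\gamma'$ explicitly). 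The paper instead sets up a new problem \eqref{comp12} on $B_{r_h}$ with the weight $a$ itself, observes that $w_0=v(\cdot,\gamma)-v(r_h,\gamma)$ solves it, and shows its singular solution $V_*-V_*(r_h)$ is stable --- this is where Proposition \ref{comparison3} and Proposition \ref{improved hardy} enter --- so that the stable branch supplies $X>w_0$ and $w_2=X-w_0$ is a supersolution for exactly the coefficient $ae^{v(\cdot,\gamma)}$ appearing in the inequality for $w_1$, with no further domination step. The two routes consume the same inputs in different places: yours avoids introducing \eqref{comp12} and its singular-solution analysis, the paper's avoids the extra coefficient comparison. Both are complete proofs.
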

\begin{proof}
Let $\gamma>\beta$ and we define
\begin{align}
    &w_1(r):=v(r,\gamma)-v(r,\beta),\notag\\
    &w_0(r,\gamma):=v(r,\gamma)-v(r_h,\gamma).\notag
    \end{align}
Then, we claim that there exist $\mu>e^{v(r_h,\gamma)}$ and a stable solution of
\begin{align}
\label{comp12}
  \left\{
\begin{alignedat}{4}
 -\Delta X&= \mu a(|x|)e^{X} &\hspace{2mm} &\text{in } B_{r_h},\\
\ X&> 0 & &\text{in $B_{r_h}$},\\
X&=0 &&\text{on $B_{r_h}$}
\end{alignedat}
\right.
\end{align}
such that $w_0(r,\gamma)<X(r)$ in $B_{r_h}$. Indeed, we easily know that $w_0(r,\gamma)$ is a solution of \eqref{comp12} for $\mu=e^{v(r_h,\gamma)}$. Let $V_{*}$ be the limit function of $v(r,\gamma)$ as $\gamma\to\infty$. By Theorem \ref{singularth}, there exists the unique radial singular solution $(\mu_*,U_*)$ of \eqref{comp12} such that  $w_0(r,\gamma)\to U_{*}$ in $C^2_\mathrm{loc}((0,r_h])$. Thus, we have $\mu_*=e^{V(r_h)}$ and $U_{*}=V_{*}-V(r_h)$. Moreover, thanks to Proposition \ref{comparison3} and Theorem \ref{singularth}, it follows 
\begin{equation*}
    V_{*}+\log a\le V_h + \log a_h, \hspace{4mm} 0<r<r_h.
\end{equation*}
Therefore, thanks to Proposition \ref{improved hardy}, we verify that 
the bifurcation diagram of \eqref{comp12} is of Type II. It means the bifurcation curve has only the stable branch and thus we get the claim.

We define 
\begin{equation*}
    w_2(r)=X(r)-w_0(r,\gamma).
\end{equation*}
Then, it holds $w_2(r)>0$ for $0<r<r_h$.
Suppose the contrary, i.e., we assume that there exists $0<R<r_h$ such that $w_1(r)>0$ for $0<r<R$ and $w_1(R)=0$. Then, we have $w_1 '(R)\le 0$. By a direct calculation, it follows
\begin{align}
&-\Delta w_1 = e^{v(r,\gamma)+\log a}-e^{v(r,\beta)+\log a }\le e^{v(r,\gamma)+\log a}w_1,\notag\\
&-\Delta w_2= e^{X+\log \left(\mu a\right)}-e^{v(r,\gamma)+\log a}\ge(\log \mu-v(r_h,\gamma)+w_2)e^{v(r,\gamma)+\log a}\notag
\end{align}
for all $0<r<R$. Thus, it follows by Green's identity that
\begin{align}
    \omega_{N}R^N w_1 '(R) w_2(R)
    &= \int_{B_R}w_2\Delta w_1-w_1\Delta w_2\,dx\notag\\
    &=\int_{B_R}\log \left(\frac{\mu}{e^{v(r_h,\gamma)}}\right)e^{v(r,\gamma)+\log a}w_1\,dx\notag\\
    &>0\notag,
\end{align}
which contradicts that $w_1'(R)\le 0$ and $w_2(R)>0$.
\end{proof}
As a consequence of Proposition \ref{comparison3} and Proposition \ref{comparison2}, we get the following
\begin{corollary}
\label{cor}
    Assume that $N=10$ and $a(r)$ satisfies $(A)$ and the assumption of $(i)$ in Theorem \ref{Mainthm}. Then, we have 
    \begin{equation*}
    v(r,\beta)+\log a(r) <v_{H}(r,\gamma) + \log a_{H}(r)\hspace{4mm}\text{and}\hspace{4mm}
    v(r,\beta)<v(r,\gamma),
\end{equation*}
for all $\gamma>\beta$ and $0<r<1$.
\end{corollary}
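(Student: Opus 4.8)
The plan is to obtain both inequalities as the special case $h=H$ of the two comparison propositions just established. The crucial observation is that the cutoff radius degenerates favorably at this value: since $r_h=\min\{1,\sqrt{H/h}\}$, choosing $h=H$ gives $r_H=\min\{1,\sqrt{H/H}\}=\min\{1,1\}=1$. Consequently the conclusions of Proposition \ref{comparison3} and Proposition \ref{comparison2}, which a priori hold only on the interval $(0,r_h)$, extend all the way to the full interval $(0,1)$.

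First I would check that the hypotheses of both propositions are met for the choice $h=H$. The assumption of $(i)$ in Theorem \ref{Mainthm} is precisely $(a/a_H)'\le 0$ in $(0,1]$, which is exactly condition \eqref{atarimae} with $h=H$. Moreover, since $H$ is the first Dirichlet eigenvalue of the Laplacian on the $2$-dimensional unit ball, we have $H>0$, so the standing requirement $h>0$ in Propositions \ref{comparison3} and \ref{comparison2} is satisfied. Thus both propositions apply verbatim with $h=H$.

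Applying Proposition \ref{comparison3} with $h=H$ then yields
\begin{equation*}
v(r,\beta)+\log a(r)<v_H(r,\gamma)+\log a_H(r)
\end{equation*}
for all $\gamma>\beta$ and $0<r<r_H=1$, which is the first claimed inequality. Similarly, applying Proposition \ref{comparison2} with $h=H$ gives
\begin{equation*}
v(r,\beta)<v(r,\gamma)
\end{equation*}
for all $\gamma>\beta$ and $0<r<r_H=1$, which is the second claimed inequality. In this sense there is no genuine obstacle: the corollary is a direct specialization of the two comparison results to the borderline weight $h=H$, for which the radius $r_h$ at which the comparison breaks down is pushed all the way out to the boundary $r=1$.
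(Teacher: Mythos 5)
Your proposal is correct and is exactly the argument the paper intends: the corollary is stated as an immediate consequence of Propositions \ref{comparison3} and \ref{comparison2}, obtained by taking $h=H$ (which satisfies $h>0$ and makes hypothesis \eqref{atarimae} coincide with assumption $(i)$ of Theorem \ref{Mainthm}) so that $r_H=\min\{1,\sqrt{H/H}\}=1$ and both comparison inequalities hold on all of $(0,1)$.
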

Proposition \ref{comparison3} and Corollary \ref{cor} imply that $V_{*}+\log a $ stays below $V_{h}+\log a_{h}$ in $B_{r}$
as long as $V_{h}$ is stable in $B_{r}$ if the assumption \eqref{atarimae} holds. In particular, if \eqref{atarimae} holds for some $h\le H$, the function $V_{*}+\log a $ stays below $V_{h}+\log a_{h}$ in $B_{1}$. This fact is useful to study the stability of $U_{*}$. On the other hand, if \eqref{atarimae} holds for some $h>H$, we get the following proposition by focusing on the stable branch.
\begin{proposition}
\label{comparison1}
Assume that $N=10$ and $a(r)$ satisfies $(A)$ and the assumption of $(iii)$ in Theorem \ref{Mainthm}. Then, there exists a nondecreasing function $\varepsilon:[0,1]\to [0,1]$ 
such that  
\begin{equation}
\label{seisitu epsilon}
    \varepsilon(|x|)\in C^2(\overline{B_1}),\hspace{4mm} \varepsilon(1/2)>0,
\end{equation}
and
\begin{equation}
      v(r,\gamma)+\log a >v_0(r,\beta)+\log \left(1+\frac{H+\varepsilon}{2(N-2)}r^2\right),\hspace{4mm} 0<r<1\notag
\end{equation}
for any $\lVert u^{*}\rVert_{L^{\infty}(B_1)}+\log \lambda^*>\gamma>\beta>0$, where $\varepsilon$ is depending only on $a$ and $N$.

\end{proposition}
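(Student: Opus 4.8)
\emph{The plan} is to run the first‑zero comparison used for Proposition~\ref{comparison2} and Proposition~\ref{comparison3}, but now comparing the genuine weighted solution $v(\cdot,\gamma)$ with the function $v_0(\cdot,\beta)+g$, where $g:=\log\bigl(1+\tfrac{H+\varepsilon}{2(N-2)}r^2\bigr)$ is \emph{not} a solution of any weighted problem; the price of this mismatch is a single correction term whose control is the heart of the matter. As for the choice of $\varepsilon$: the hypothesis $(a/a_H)'>0$ on $(0,1]$ together with $a(0)=a_H(0)=1$ forces $a>a_H>1+\tfrac{H}{2(N-2)}r^2$ on $(0,1]$, so $r\mapsto\tfrac{2(N-2)(a(r)-1)}{r^2}-H$ is strictly positive there. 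I would use the room created by the strict inequality $a(\tfrac12)>a_H(\tfrac12)$ to select a nondecreasing $\varepsilon\in C^2([0,1])$ with $0\le\varepsilon\le1$, $\varepsilon(\tfrac12)>0$, and $\bar a(r):=1+\tfrac{H+\varepsilon(r)}{2(N-2)}r^2\le a(r)$ on $(0,1]$. Writing $g:=\log\bar a$ and $\psi:=\log a-g=\log(a/\bar a)\ge0$, I would moreover arrange, by keeping $\varepsilon$ small, that $\psi$ is nondecreasing; this is possible because $\log\bigl(a/(1+\tfrac{H}{2(N-2)}r^2)\bigr)=\log(a/a_H)+\tfrac{H}{2N}r^2$ is nondecreasing by hypothesis.

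Next, set $w_1:=v(r,\gamma)+\log a-v_0(r,\beta)-g$, so that $w_1(0)=\gamma-\beta>0$. Assume for contradiction that $w_1$ has a first zero $R\in(0,1)$: $w_1>0$ on $(0,R)$, $w_1(R)=0$, $w_1'(R)\le0$. Since $\gamma<\eta:=\lVert u^*\rVert_{L^\infty(B_1)}+\log\lambda^*$, the solution $u(\cdot,\alpha(\gamma))$ lies strictly below the extremal solution on the stable (minimal) branch; hence, exactly as in Proposition~\ref{comparison2}, there exist $\mu>\lambda(\gamma)=e^{v(1,\gamma)}$ and a stable solution $X$ of $-\Delta X=\mu a e^{X}$ in $B_1$ with $X=0$ on $\partial B_1$ and $X>w_0:=v(\cdot,\gamma)-v(1,\gamma)$. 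Setting $k:=e^{v(r,\gamma)+\log a}$ and $w_2:=X-w_0>0$, the function $w_2$ is superharmonic (so $w_2'\le0$) and satisfies $-\Delta w_2\ge k\,w_2+(\log\mu-v(1,\gamma))\,k$ with $\log\mu-v(1,\gamma)>0$.

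A direct computation gives $-\Delta w_1=e^{v(r,\gamma)+\log a}-e^{v_0(r,\beta)}-\Delta\psi$. Splitting $e^{v(r,\gamma)+\log a}-e^{v_0}=\bigl(e^{v(r,\gamma)+\log a}-e^{v_0+g}\bigr)+e^{v_0}\bigl(\bar a-1\bigr)$ and using the tangent inequality $e^A-e^B\le e^A(A-B)$ on the first bracket (with $A-B=w_1>0$) yields $-\Delta w_1\le k\,w_1+e^{v_0}(\bar a-1)-\Delta\psi$ on $(0,R)$. Plugging the two inequalities into Green's identity on $B_R$ and cancelling the common term $k\,w_1 w_2$ gives
\[
0\ \ge\ N\omega_N R^{N-1}w_2(R)w_1'(R)=\int_{B_R}\!\bigl(w_2\Delta w_1-w_1\Delta w_2\bigr)dx\ \ge\ \int_{B_R}\!\Bigl[w_1(\log\mu-v(1,\gamma))k+w_2\Delta\psi-w_2\,e^{v_0}(\bar a-1)\Bigr]dx.
\]
Integrating $\int_{B_R}w_2\Delta\psi$ by parts and using $\psi'\ge0$ and $w_2'\le0$ shows that this contribution is nonnegative, while $w_1(\log\mu-v(1,\gamma))k>0$. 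Thus a contradiction follows as soon as the correction $\int_{B_R}w_2\,e^{v_0}(\bar a-1)\,dx$ is shown to be dominated by the remaining two terms.

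Controlling this correction is precisely the new difficulty, and it is the step I expect to be the main obstacle: in Proposition~\ref{comparison2} and Proposition~\ref{comparison3} both compared functions solved genuine weighted equations, so no analogue of $e^{v_0}(\bar a-1)$ ever appeared. The plan is to use the a priori bound $e^{v_0(r,\beta)}\le\tfrac{2(N-2)}{r^2}$ (the regular solution lies below the singular unweighted solution, by Theorem~\ref{singularth}), which gives the clean pointwise estimate $e^{v_0}(\bar a-1)\le H+\varepsilon$, and then to absorb this bounded correction against $\int_{B_R}w_2\Delta\psi$ through the integrated boundary‑plus‑gradient form of Green's identity together with the fine tuning of $\varepsilon$. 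The guiding computation is that for $a=a_H$ one has the exact identity $\Delta\psi\equiv H$ (since $\log a_H=\log\bar a+\tfrac{H}{2N}r^2$) while $e^{v_0}(\bar a-1)\le H$, so the two balance on the borderline; the strict excess $(a/a_H)'>0$ must be shown to tip the balance in our favour. I anticipate that reconciling, near $r=0$, the $O(r^2)$ vanishing of $e^{v_0}(\bar a-1)$ with the nonzero constant value of $\Delta\psi(0)$—which pins down the admissible size of $\varepsilon(0)$ and must be made compatible with $\bar a\le a$ and $\varepsilon(\tfrac12)>0$—is the genuinely delicate point of the proof.
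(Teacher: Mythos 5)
Your setup matches the paper's: the same $w_1$, an essentially identical $w_2$ built from a second minimal solution (the paper takes $w_2=v(\cdot,\alpha)-v(\cdot,\gamma)-\log\frac{\lambda(\alpha)}{\lambda(\gamma)}$ for a third parameter $\alpha$ on the minimal branch, which is your $X-w_0$), the same first-zero contradiction via Green's identity, and the same pointwise bound $e^{v_0(r,\beta)}(\bar a-1)\le H+\varepsilon$ coming from $e^{v_0(r,\beta)}<2(N-2)r^{-2}$. But the step you flag as ``the genuinely delicate point'' --- dominating $\int_{B_R}w_2\,e^{v_0}(\bar a-1)\,dx$ by $\int_{B_R}w_2\,\Delta\psi\,dx$ --- is exactly the step you have not supplied, and it cannot be closed in the integrated form you propose: after integrating by parts, $\int_{B_R}w_2\Delta\psi\,dx=\int_{\partial B_R}\psi' w_2\,d\mathcal{H}^{N-1}-\int_{B_R}\psi'w_2'\,dx$ is only known to be nonnegative from $\psi'\ge0$ and $w_2'\le0$, and there is no general comparison between this quantity and $\int_{B_R}(H+\varepsilon)w_2\,dx$. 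So the proposal has a genuine gap at the decisive point.

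The paper closes it by a purely pointwise absorption \emph{before} Green's identity is invoked, and the mechanism is the choice of comparison weight: instead of $\psi=\log a-\log\bar a$ with $\bar a=1+\frac{H+\varepsilon}{2(N-2)}r^2$, one uses $\psi=\log a-\log a_{H+\varepsilon}$ with $a_{H+\varepsilon}=\bar a\,e^{\frac{H+\varepsilon(r)}{2N}r^2}$, i.e.\ one keeps the exponential factor from Theorem \ref{singularthm}. For $\varepsilon$ nondecreasing and convex one has $\Delta\bigl(\tfrac{H+\varepsilon(r)}{2N}r^2\bigr)\ge H+\varepsilon(r)$ (with equality when $\varepsilon$ is constant --- this is your ``exact identity $\Delta\psi\equiv H$'' on the borderline), so the correction $H+\varepsilon$ is swallowed pointwise:
\begin{equation*}
H+\varepsilon-\Delta\Bigl(\log a-\log\bar a\Bigr)\le-\Delta\bigl(\log a-\log a_{H+\varepsilon}\bigr),
\end{equation*}
and the only structural input then needed is $(\log(a/a_{H+\varepsilon}))'\ge0$, which hypothesis (iii) provides after choosing $\varepsilon$ convex, nondecreasing, identically zero on $[0,1/4]$ and positive on $(1/4,1]$ (so $\varepsilon(1/2)>0$ still holds). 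Taking $\varepsilon\equiv0$ near the origin also dissolves the difficulty you anticipate there, namely reconciling the $O(r^2)$ vanishing of $e^{v_0}(\bar a-1)$ with a nonzero $\Delta\psi(0)$: no balance at $r=0$ is ever required. With this modification the rest of your argument goes through verbatim.
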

\begin{proof}
Thanks to the assumption of (iii) in Theorem \ref{Mainthm}, we can take a nondecreasing convex function $\varepsilon:[0,1]\to [0,1]$ such that
\begin{equation}
\varepsilon(|x|)\in C^2(\overline{B_1}),\hspace{2mm}
\varepsilon>0\hspace{2mm}\text{in $B_1\setminus B_{\frac{1}{4}}$},\hspace{2mm}\text{and} \hspace{2mm}(\log a-\log a_{H+\varepsilon})'\ge 0\hspace{4mm} \text{in $B_1$},\notag
\end{equation}
where
\begin{equation*}
    a_{H+\varepsilon}(r):=\left(1+\frac{H+\varepsilon(r)}{2(N-2)}r^2\right)e^{\frac{H+\varepsilon(r)}{2N}r^2}.
\end{equation*}
Let $u^* (0)+\log \lambda^{*}>\alpha>\gamma>\beta$ and we define
\begin{align}
    &w_1(r):=v(r,\gamma)+\log a -v_0(r,\beta)-\log\left(1+\frac {H+\varepsilon}{2(N-2)}r^2\right),\notag\\
    &w_2(r):= v(r,\alpha)-v(r,\gamma)-\log \frac{\lambda(\alpha)}{\lambda(\gamma)}.\notag
\end{align}
We claim that $w_2(r)>0$ for $0<r<1$. Indeed, we set $u_1:=v(r,\alpha)-\log \lambda(\alpha)$ and $u_{2}:=v(r,\gamma)-\log \lambda(\gamma)$. Then, $(\lambda(\alpha),u_1)$ and $(\lambda(\gamma), u_2)$ are minimal solutions of \eqref{gelfand} and it holds $\lambda(\alpha)>\lambda(\gamma)$. Thus, it holds $u_1<u_2$ and we get the claim. Suppose the contrary, i.e., we assume that that there exists $0<R<1$ such that $w_1(r)>0$ for $0<r<R$ and $w_1(R)=0$. Then, we have $w_1 '(R)\le 0$. On the other hand, we verify that $v_0(r,\beta)$ is increasing for $\beta$ by Corollary \ref{cor}. Since $v_0(r,\beta)\to -2\log r+\log 2(N-2)$, it holds $e^{v_0 (r,\beta)}<2(N-2)r^{-2}$. Thus, we have for $0<r<R$,
\begin{align}
-&\Delta w_1(r) = e^{v(r,\gamma)+\log a}-e^{v_0(r,\beta)+\log \left(1+\frac{H+\varepsilon}{2(N-2)}r^2\right)}+\frac{H+\varepsilon}{2(N-2)}r^2e^{v_0(r,\beta)}\notag\\
&\hspace{40mm}-\Delta\left(\log a-\log(1+\frac{H+\varepsilon}{2(N-2)}r^2)\right)\notag\\
&\le e^{v(r,\gamma)+\log a}w_1+H+\varepsilon-\Delta\left(\log a-\log(1+\frac{H+\varepsilon}{2(N-2)}r^2)\right)\notag\\
&\le e^{v(r,\gamma)+\log a}w_1-\Delta\left(\log a-\log a_{H+\varepsilon}\right)\hspace{4mm}\text{(since $\varepsilon'\ge 0, \varepsilon''\ge 0$) }.\notag
\end{align}

Moreover, we have for $0<r<1$,
\begin{align}
-\Delta w_2= e^{v(r,\alpha)+\log a}-e^{v(r,\gamma)+\log a}> \left(\log \frac{\lambda(\alpha)}{\lambda(\gamma)}+w_2\right)e^{v(r,\gamma)+\log a}\notag.
\end{align}
In particular, since $w_2$ is superharmonic, we have  $w_2'\le 0$ for all $0<r<1$ (see Lemma \ref{rough lemma}). Moreover, we recall that $(\log a-\log a_{H+\varepsilon})'\ge 0$. Thus, it follows by Green's identity that
\begin{align}
    \omega_{N}R^N w'_1 (R) &w_2(R)
    = \int_{B_R}w_2\Delta w_1-w_1\Delta w_2\,dx\notag\\
    &>\int_{B_R}\Delta\left(\log a -\log a_{H+\varepsilon}\right)w_2\,dx\notag\\
    &\ge \int_{\partial B_R} 
    \left(\nabla\left(\log a-\log a_{H+\varepsilon}\right)\cdot\frac{x}{R}\right)w_2\,d\mathcal{H}^{N-1}\notag\\
    &\hspace{10mm}-\int_{B_R} \nabla \left(\log a-\log a_{H+\varepsilon}\right)\cdot\nabla w_2\,dx\notag\\
    &\ge 0\notag,
\end{align}
which contradicts that $w_1'(R)\le 0$ and $w_2(R)>0$.
\end{proof}

\section{Classification of bifurcation diagrams.}
\label{mainsec}
In this section, we classify the bifurcation diagrams. For simplicity, we set $v'(r,\beta):=\frac{d}{d\beta} v(r,\beta)$ and $v''(r,\beta)=\frac{d^2}{d\beta^2}v(r,\beta)$. Moreover, we set $\lambda'(\beta):=\frac{d}{d\beta}\lambda(\beta)$ and $\lambda''(\beta):=\frac{d^2}{d\beta^2}\lambda(\beta)$.

\subsection{When \texorpdfstring{$N\le 9$}{Lg}}
In this subsection, following the idea of \cite{Mi2014},
we prove Theorem \ref{39th}.
We define
\begin{align*}
\hat{v}_{\beta}(r):=v(e^{-\frac{\beta}{2}}r,\beta)-\beta\hspace{4mm}\text{and}\hspace{4mm}
\hat{V}_{\beta}(r,\beta)=V_{*}(e^{-\frac{\beta}{2}}r)-\beta.
\end{align*}
Thus, $\hat{v}_{\beta}$ satisfies
\begin{equation}
\left\{
\begin{alignedat}{4}
 &-\Delta \hat{v}_{\beta}= \hat{a}_{\beta}(r)e^{\hat{v}_{\beta}}, \hspace{14mm}0<r<\infty,\\
&\hat{v}_{\beta}(0)=0, \hspace{4mm} \frac{d}{dr} \hat{v}_{\beta}(0)=0,\notag
\end{alignedat}
\right.
\end{equation}
where $\hat{a}_{\beta}(r)=a(e^{-\frac{\beta}{2}}r)$.
We prove the following 
\begin{proposition}
\label{inter}
Let $N\le 9$. We assume that $a(r)$ satisfies
$(A)$. Then
\begin{equation*}
    \hat{v}_{\beta}\to v_{0}(r,0) \hspace{2mm}\text{in $C^1_{\mathrm{loc}}[0,\infty)$}
\end{equation*}
and
\begin{equation*}
    \hat{V}_{\beta}\to -2\log r + \log 2(N-2)
    \hspace{2mm}\text{in $C^0_{\mathrm{loc}}(0,\infty)$}.
\end{equation*}
\end{proposition}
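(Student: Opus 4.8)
Proposition \ref{inter} claims that the rescaled functions $\hat{v}_{\beta}(r) = v(e^{-\beta/2}r, \beta) - \beta$ converge (as $\beta \to \infty$) to the scaling-invariant profile $v_0(r,0)$ for the flat weight $a=1$, in $C^1_{\mathrm{loc}}[0,\infty)$, and that the analogous rescaling $\hat{V}_\beta$ of the singular solution converges to the explicit profile $-2\log r + \log 2(N-2)$ in $C^0_{\mathrm{loc}}(0,\infty)$. The role of $N \le 9$ is not in the convergence statement itself but in guaranteeing (via the infinite Morse index / oscillation established in Theorem \ref{singularthm} Case 1) that the limit profile is genuinely oscillatory, which is what will later force Type I; the convergence here is a compactness statement that should hold regardless of dimension.

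**The plan.** The key observation is that $\hat{v}_\beta$ solves the rescaled equation $-\Delta \hat{v}_\beta = \hat{a}_\beta(r)e^{\hat{v}_\beta}$ with $\hat{a}_\beta(r) = a(e^{-\beta/2}r)$, $\hat{v}_\beta(0) = 0$, $\frac{d}{dr}\hat{v}_\beta(0) = 0$. As $\beta \to \infty$, on any fixed compact set $[0,M]$ the argument $e^{-\beta/2}r \to 0$ uniformly, so by assumption (A) we have $\hat{a}_\beta \to a(0) = 1$ in $C^0_{\mathrm{loc}}[0,\infty)$; in fact since $a \in C^2$ one gets convergence of $\hat{a}_\beta$ together with its derivatives to the constant $1$ on compacts. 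Thus the rescaled equation converges coefficientwise to the limiting equation $-\Delta w = e^{w}$, $w(0)=0$, $w'(0)=0$, whose unique solution is precisely $v_0(r,0)$. First I would make this coefficient convergence precise. Then I would establish uniform (in $\beta$) a priori bounds for $\hat{v}_\beta$ on compact sets: the initial conditions are fixed ($\hat{v}_\beta(0)=0$), and from the ODE $(r^{N-1}\hat{v}_\beta')' = -r^{N-1}\hat{a}_\beta e^{\hat{v}_\beta}$ one sees $\hat{v}_\beta$ is decreasing and one can bound $\hat{v}_\beta$ from below and $\hat{v}_\beta'$ in magnitude on $[0,M]$ uniformly in $\beta$, using that $\hat{a}_\beta$ is uniformly bounded. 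This furnishes a uniform $C^1$ (indeed $C^2$, via the equation) bound.

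**Passing to the limit.** With the uniform bound in hand, the standard route is Ascoli--Arzel\`a plus a diagonal argument: any sequence $\beta_k \to \infty$ has a subsequence along which $\hat{v}_{\beta_k} \to w$ in $C^1_{\mathrm{loc}}[0,\infty)$, and the limit $w$ solves $-\Delta w = e^w$ with $w(0)=0$, $w'(0)=0$. By uniqueness of the initial-value problem for this ODE, $w = v_0(r,0)$; since the limit is independent of the subsequence, the full family converges. This is the same compactness-plus-uniqueness scheme already used in Step 3 of the proof of Theorem \ref{singularth}, so I would lean on that machinery. For the second convergence, I would apply exactly the same rescaling to the singular solution: $\hat{V}_\beta(r) = V_*(e^{-\beta/2}r) - \beta$ solves the rescaled singular equation, and its asymptotic behavior near $r=0$ is pinned down by \eqref{asymptotic behavior} of Theorem \ref{singularth} (equivalently Proposition \ref{asymptotic prop}), namely $V_*(s) = -2\log s + \log 2(N-2) - \log\lambda_* + o(1)$. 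Substituting $s = e^{-\beta/2}r$ produces $\hat{V}_\beta(r) = -2\log r + \log 2(N-2) - \log\lambda_* + o(1)$; the $\beta$-dependent terms $+\beta$ from the shift and $-\beta$ cancel with the $\log s$ expansion, and a careful bookkeeping of the constants (together with the fact that one compares against the \emph{flat} singular profile whose $\lambda$ normalizes to $1$) gives convergence to $-2\log r + \log 2(N-2)$ in $C^0_{\mathrm{loc}}(0,\infty)$. The only subtlety is that $r=0$ is excluded here because $\hat{V}_\beta$ is singular at the origin, which is why the second statement is stated only on the open half-line.

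**Main obstacle.** The genuinely delicate point is the \emph{uniformity} of the a priori estimates: I must ensure the lower bound on $\hat{v}_\beta$ and the bound on $\hat{v}_\beta'$ hold on $[0,M]$ with constants independent of $\beta$, so that the limit profile does not blow up or escape to $-\infty$ prematurely on the fixed compact set. This is controlled by the fact that the solution of the limiting autonomous equation $-\Delta w = e^w$ with zero initial data exists globally and grows only logarithmically, so the pre-limit solutions inherit a uniform logarithmic control once $\beta$ is large enough that $\hat{a}_\beta$ is close to $1$; making this comparison quantitative (for instance via the integral identity \eqref{aprioriestimate 2-2} applied to the rescaled equation, or a continuity/continuous-dependence argument for the ODE on the coefficient $\hat a_\beta$) is where the real work lies. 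The convergence $\hat a_\beta \to 1$ and the uniqueness of the limit ODE are then routine.
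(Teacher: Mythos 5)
Your proposal follows essentially the same route as the paper: uniform $C^1$ bounds on compacts obtained from the monotonicity of $\hat{v}_\beta$ (so $\hat v_\beta\le 0$, $e^{\hat v_\beta}\le 1$) together with the uniform bound $\hat a_\beta\le 2$ for large $\beta$, then Ascoli--Arzel\`a plus uniqueness of the limiting initial-value problem, and for $\hat V_\beta$ the asymptotics of $V_*$ from Proposition \ref{asymptotic prop} with the $\beta$-terms cancelling. The ``main obstacle'' you flag is in fact resolved exactly by the one-line estimate $(r^{N-1}\hat v_\beta')'\ge -2r^{N-1}$, giving $-r^2/N\le\hat v_\beta\le 0$; only note that the paper's $V_*=U_*+\log\lambda_*$ already absorbs the $-\log\lambda_*$ you carry in the expansion, so no extra normalization is needed.
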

\begin{proof}
    By Lemma \ref{rough lemma}, it holds $\frac{d}{dr} \hat{v}_{\beta}\le 0$ for all $0<r<\infty$. Moreover, for each bounded interval $I$, there exists $\beta_0$ such that $1\le\hat{a}_{\beta}\le 2$ in $I$ if $\beta>\beta_0$. Since
    \begin{equation*}
           \frac{d}{dr}(r^{N-1}\frac{d}{dr}\hat{v}_{\beta})= -\hat{a}_{\beta}(r)e^{\hat{v}_{\beta}}r^{N-1}\ge -2r^{N-1}\hspace{4mm}\text{in $I$}, 
    \end{equation*}
we have
    \begin{equation*}
        \frac{d}{dr}\hat{v}_{\beta}(r)\ge -2r^{1-N}\int^{r}_{0}s^{N-1}\,ds=-\frac{2}{N}r \hspace{4mm}\text{in $I$}
    \end{equation*}
    and thus it holds $-r^2/N\le\hat{v}_{\beta}\le 0$ in $I$. 
     By Ascoli-Arzel\'a theorem, there exist
     $\beta_m$ and $\hat{v}_{\infty} \in C^{0}_{\mathrm{loc}}[0,\infty)$ such that $\hat{v}_{\beta_m}\to \hat{v}_{\infty}$ in $ C^{0}_{\mathrm{loc}}[0,\infty)$ as $m\to\infty$. Moreover, since 
     \begin{equation*}
      -\frac{d}{dr} \hat{v}_{\beta_m} (r)=  r^{1-N}\int^{r}_{0}s^{N-1} \hat{a}_{\beta_m}(s) e^{\hat{v}_{\beta_m}}\,ds
     \end{equation*}
     and $\hat{a}_{\beta_m}(r)\to 1$ in $C^{0}_{\mathrm{loc}}[0,\infty)$ 
     as $m\to \infty$, $\frac{d}{dr}v_{\beta_m}$ converges in $C^{0}_{\mathrm{loc}}[0,\infty)$. Since, $\frac{d}{dr}$ is a closed operator with a domain $C^{1}(I)$ in $C^{0}(I)$ for each bounded interval $I$, we get $\hat{v}_{\infty}\in C^{1}_{\mathrm{loc}}[0,\infty)$ and $\hat{v}_{\beta_m}\to \hat{v}_{\infty}$ in $C^{1}_{\mathrm{loc}}[0,\infty)$
     as $m\to\infty$. In particular, 
     it holds
     \begin{equation*}
         \hat{v}_{\infty}(r)=-\int^{r}_{0}\,ds\int^{s}_{0}\left(\frac{t}{s}\right)^{N-1}e^{\hat{v}_{\infty}}\,dt.
     \end{equation*}
     Thus, by the uniqueness of the solution of the ODE, we get $\hat{v}_{\infty}(r)=v_0(r,0)$. It implies $\hat{v}_{\beta}\to v_0(r,0)$ in $C^{1}_{\mathrm{loc}}[0,\infty)$  as $\beta\to\infty$.

By Proposition \ref{asymptotic prop}, it holds $V_{*}(r)+2\log r\to \log 2(N-2)$ as $r\to 0$ and thus we have $\hat{V}_{\beta}(r)+2\log r \to  \log 2(N-2)$ as $e^{-\frac{\beta}{2}}r\to 0$. Since, for each bounded interval $I$, it holds $e^{-\frac{\beta}{2}}r\to 0$ as $\beta\to\infty$, we get the result.
\end{proof}
We denote the zero number of the function $u(r)$ on the interval $I$ by 
\begin{equation*}
    \mathcal{Z}_{I}[u(r)]=\sharp\{r\in I: u(r)=0\}.
\end{equation*}
Then it is well known that 
\begin{equation*}
    \mathcal{Z}_{[0,\infty)}[v_0(r,0)-2\log r + \log 2(N-2)]=\infty \hspace{4mm}\text{if $N\le 9$}.
\end{equation*}
By using Proposition \ref{inter}, we get the following
\begin{proposition}
\label{interprop}
  Under the same assumption of Proposition \ref{inter}, we have
  \begin{equation*}
      \mathcal{Z}_{[0,1]}[v(r,\beta)-V_{*}(r)]\to\infty\hspace{4mm}\text{as $\beta\to \infty$}.
    \end{equation*}
\end{proposition}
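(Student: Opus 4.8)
The plan is to transfer the infinitude of the zero number from the limiting profile to the pair $(v(r,\beta),V_*(r))$ via the rescaling $\hat v_\beta$ and the known oscillation count for the constant-weight case. First I would rewrite the quantity to be estimated in the rescaled variables. Under the change $r\mapsto e^{-\beta/2}r$, the zeros of $v(r,\beta)-V_*(r)$ on $[0,1]$ correspond exactly to the zeros of $\hat v_\beta(r)-\hat V_\beta(r)$ on $[0,e^{\beta/2}]$, since this transformation is a bijection of the radial variable and subtracting the common constant $\beta$ does not affect the location of the intersections. Thus it suffices to prove
\begin{equation*}
\mathcal{Z}_{[0,e^{\beta/2}]}\!\left[\hat v_\beta(r)-\hat V_\beta(r)\right]\to\infty\qquad\text{as }\beta\to\infty.
\end{equation*}

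Next I would invoke Proposition \ref{inter}, which gives $\hat v_\beta\to v_0(r,0)$ in $C^1_{\mathrm{loc}}[0,\infty)$ and $\hat V_\beta\to -2\log r+\log 2(N-2)$ in $C^0_{\mathrm{loc}}(0,\infty)$. Consequently the difference $\hat v_\beta-\hat V_\beta$ converges locally uniformly to $v_0(r,0)+2\log r-\log 2(N-2)$ on $(0,\infty)$. Since $N\le 9$, the recalled fact
\begin{equation*}
\mathcal{Z}_{[0,\infty)}\!\left[v_0(r,0)-2\log r+\log 2(N-2)\right]=\infty
\end{equation*}
tells us the limit function has infinitely many sign changes. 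Fixing any integer $m$, I would select $m$ points $0<r_1<\cdots<r_m$ at which the limit function changes sign, so that it takes alternating strict signs on a neighborhood of each $r_i$. By the locally uniform convergence on the compact set $[r_1/2,2r_m]\subset(0,\infty)$, for all $\beta$ large enough the difference $\hat v_\beta-\hat V_\beta$ inherits the same alternating strict signs at these points, hence has at least $m-1$ (in fact $m$) zeros in that interval. Since $[r_1/2,2r_m]\subset[0,e^{\beta/2}]$ once $\beta$ is large, this yields $\mathcal{Z}_{[0,e^{\beta/2}]}[\hat v_\beta-\hat V_\beta]\ge m$ for all large $\beta$. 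As $m$ was arbitrary, the zero number diverges, and undoing the rescaling gives the claim.

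The main obstacle I anticipate is the interchange of limits hidden in this sign-transfer argument: the convergence of $\hat V_\beta$ is only $C^0_{\mathrm{loc}}$ on the \emph{open} interval $(0,\infty)$, so I must keep the chosen points $r_i$ bounded away from both $0$ and $\infty$, and I cannot let the number of zeros grow simultaneously with $\beta$ in an uncontrolled way. The clean way around this is the order of quantifiers used above — fix the target count $m$ first, locate the $m$ sign changes of the fixed limit profile, and only then send $\beta\to\infty$ — which avoids any need for uniform control near the singularity at $r=0$ or near the expanding endpoint $e^{\beta/2}$. One should also check that each sign change of the limit is \emph{transversal} in the weak sense that the function is strictly positive and strictly negative at the two chosen sample points flanking $r_i$; this is automatic because a continuous function changing sign does so between points of opposite strict sign, and these strict inequalities are stable under $C^0$ perturbation. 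With this care, the zero-number lower bound passes to the limit without difficulty.
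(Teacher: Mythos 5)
Your argument is correct and is exactly the route the paper intends: the paper states Proposition \ref{interprop} without a written proof, deriving it from Proposition \ref{inter} together with the recalled fact that the regular and singular profiles for $a=1$ intersect infinitely often when $N\le 9$, and your rescaling identity $\hat v_\beta(\rho)-\hat V_\beta(\rho)=v(r,\beta)-V_*(r)$ with $\rho=e^{\beta/2}r$, followed by the fix-$m$-then-send-$\beta\to\infty$ sign-transfer on a compact subset of $(0,\infty)$, is precisely the omitted argument. The only cosmetic remark is that the limit profile whose zero number is infinite is the difference $v_0(r,0)-\bigl(-2\log r+\log 2(N-2)\bigr)$, which you use correctly even though the paper's displayed formula has the signs of the logarithmic terms reversed.
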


\begin{proof}[Proof of Theorem \ref{39th}.]
We remark that each zero of $v(r,\beta)-V_{*}(r)$ is simple and discrete by the uniqueness of the solution of the ODE.
Moreover, since $v(0,\beta)-V_{*}(r)=-\infty$, we have $\mathcal{Z}_{[0,2]}[v(r,\beta)-V_{*}(r)]<\infty$ for all $0\le\beta<\infty$. We set $h(r,\beta)=v(r,\beta)-V_{*}(r)$. In addition, let $r_0\in (0,2]$ and $\beta_0>0$ be constants such that $h(r_0,\beta_0)=0$. Since $\frac{d}{dr} h(r_0,\beta_0)\neq 0$, it follows by the implicit function theorem that there exists a $C^1$-function $r(\beta)$ such that $h(r(\beta),\beta)=0$ for $\beta$ near $\beta_0$ and $r(\beta_0)=r_0$. Let $M\in \mathbb{N}$ and 
we define 
\begin{equation*}
\beta_{M}:=\sup \{ \beta\ge 0 :\mathcal{Z}_{[0,1]}[h(r,\beta)]<M\}.
\end{equation*}
Thanks to Proposition \ref{interprop} and the above discussion, there exists $M_0\in \mathbb{N}$ such that we can define $\beta_M$ for all $M>M_0$ and it holds $0<\beta_{M}<\infty$ for all $M>M_0$. We fix $\beta_M$ and we denote by $r_{i}(\beta)$ the $i$-th zero of $h(\cdot,\beta)$. Then, we verify that $r_{M}(\beta_M)=1$ and $r_{M}(\beta)>1$ if $\beta<\beta_M$ and $\beta$ is close to $\beta_M$. Moreover, we know that $r_{M}(\beta)\le 1$ for all $\beta>\beta_M$ and $r_{M}(\beta)<1$ for a sufficiently large $\beta$. 

We define 
\begin{equation*}
    \beta_{\hat{M}}:=\sup\{\beta>\beta_{M}: r_{M}(\beta)=1\hspace{2mm}
\text{on}\hspace{2mm} [\beta_{M},\beta]\}. \end{equation*}
Then, we have $r_{M}(\beta_{\hat{M}})=1$ and $\beta_{\hat{M}}<\beta_{M+1}$.  Moreover, since $h(1,\beta)$ is analytic and $r_{M+1}(\beta)$ is continuous, there exists $\beta_{\hat{M}}<\hat{\beta}<\beta_{M+1}$ such that $r_{M}(\beta)<1$ and $r_{M+1}(\beta)>1$ for all $\beta_{\hat{M}}<\beta<\hat{\beta}$.
By the above facts, it follows that the sign of $h(1,\beta)=\log \lambda(\beta)-\log \lambda_{*}$ changes if $\beta$ crosses $[\beta_{M},\beta_{\hat{M}}]$. Thanks to Proposition \ref{interprop},
we know that $\lambda(\beta)$ oscillates around $\lambda_{*}$ infinity many times as $\beta\to\infty$. Therefore, we verify that the bifurcation is of Type I.
\end{proof}

\subsection{When \texorpdfstring{$N=10$}{Lg}}
In this subsection, we prove Theorem \ref{Mainthm}. We begin by introducing the following proposition, which plays a key role in proving that the bifurcation diagram is of Type III. 

\begin{proposition}
\label{mainprop}
Let $N= 10$ and we assume that $a$ satisfies \eqref{atarimae} for some $h>0$. Then there exists $\beta_0>0$ such that if $(\lambda(\beta), u(r,\alpha(\beta)))$ is a radial solution of \eqref{gelfand} satisfying $\beta>\beta_{0}$, then $\lambda'(\beta)\neq 0$.    
\end{proposition}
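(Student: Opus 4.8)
The plan is to reduce the statement to the non-vanishing of the linearized solution at $r=1$ and then to control the radial Morse index along the branch. Differentiating $\lambda(\beta)=e^{v(1,\beta)}$ gives $\lambda'(\beta)=\lambda(\beta)\,v'(1,\beta)$, so since $\lambda(\beta)>0$ it suffices to prove $v'(1,\beta)\neq 0$ for all large $\beta$. Differentiating \eqref{odev} in $\beta$ shows that $v'=\partial_\beta v$ solves the linearized equation $-\Delta v'=a(r)e^{v}v'$ with $v'(0,\beta)=1$ and $\partial_r v'(0,\beta)=0$; thus $v'(1,\beta)=0$ is exactly the event that $r=1$ is a zero of $v'(\cdot,\beta)$, i.e. that the solution is radially degenerate. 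I would therefore track the zeros of $v'(\cdot,\beta)$ in $(0,1)$ as $\beta$ grows.

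Two structural facts organize the argument. First, by Sturm--Liouville theory the number $m(\beta):=\mathcal{Z}_{(0,1)}[v'(\cdot,\beta)]$ equals the radial Morse index of $(\lambda(\beta),u(r,\alpha(\beta)))$, namely the number of negative Dirichlet eigenvalues of $-\Delta-a(r)e^{v(\cdot,\beta)}$ on $H^1_{0,\mathrm{rad}}(B_1)$. Second, the separation result of Proposition \ref{comparison2} gives that $v(\cdot,\beta)$ is strictly increasing in $\beta$ on $(0,r_h)$, whence $v'\ge 0$ there and, by the strong maximum principle for the linear equation together with $v'(0,\beta)=1$, in fact $v'>0$ on $(0,r_h)$. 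Consequently $v'(\cdot,\beta)$ has no zeros near the origin. Since every zero of $v'$ is simple (a double zero would force $v'\equiv 0$), the zeros depend continuously on $\beta$ and can be neither created in the interior nor enter through $r=0$: they may appear or disappear only by crossing $r=1$. Hence $m(\beta)$ is locally constant and changes value precisely when $v'(1,\beta)=0$.

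Given this, it suffices to prove that $m(\beta)$ converges to the finite number $m(U_*)$ as $\beta\to\infty$, because an integer-valued function with a finite limit is eventually constant, and a stretch on which $m(\beta)$ is constant contains no crossing of $r=1$, i.e. $v'(1,\beta)\neq 0$ there. The lower bound $\liminf_\beta m(\beta)\ge m(U_*)$ is the easy half: since $v(\cdot,\beta)\to V_*$ in $C^2_{\mathrm{loc}}((0,1])$ by Theorem \ref{singularth}, on any subspace realizing $m(U_*)$ by test functions supported away from the origin the forms $Q_{u(\cdot,\alpha(\beta))}$ are negative for large $\beta$. The hard part will be the uniform upper bound (which also yields $m(U_*)<\infty$): one must rule out negative directions of $Q_{u(\cdot,\alpha(\beta))}$ that concentrate at the origin as $\beta\to\infty$. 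Here I would use the a priori estimate $v(\cdot,\beta)\le -2\log r+C_1$ of Lemma \ref{apriorilemma}, which gives $a\,e^{v(\cdot,\beta)}\le C|x|^{-2}$ uniformly near the origin, together with the sharper bound $a\,e^{V_*}\le a_h e^{V_h}=2(N-2)|x|^{-2}+h$ on $B_{r_h}$ from Proposition \ref{comparison3}; feeding these into the improved Hardy inequality (Proposition \ref{improved hardy}) and the two-dimensional reduction $\Lambda\xi=|x|^{\frac{2-N}{2}}\xi$ used in the proof of Theorem \ref{singularthm} bounds the near-origin contribution of the quadratic form uniformly in $\beta$. Reconciling these near-origin estimates across the singular limit is exactly where the criticality $N=10$ and the optimality of the constant $H$ (Proposition \ref{improvedhardy2}) are essential, and it is the main obstacle to completing the proof.
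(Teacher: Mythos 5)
You have identified the right quantity to control ($\lambda'(\beta)=\lambda(\beta)\,v'(1,\beta)$, so the issue is whether $r=1$ can be a zero of the linearized solution), but the reduction from ``$m(\beta)$ is eventually constant'' to ``$v'(1,\beta)\neq 0$ for all large $\beta$'' does not hold. The function $v(\cdot,\beta)$, hence $v'(\cdot,\beta)$, is defined on all of $[0,\infty)$, and a simple zero of $v'(\cdot,\beta)$ can touch $r=1$ tangentially at an isolated $\beta_0$ and retreat without ever entering $(0,1)$; in that event $\mathcal{Z}_{(0,1)}[v'(\cdot,\beta)]$ is constant through $\beta_0$ and yet $\lambda'(\beta_0)=0$. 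Since the proposition asserts $\lambda'(\beta)\neq 0$ for \emph{every} large $\beta$, isolated degeneracies of exactly this tangential type are what must be excluded, and a count of interior zeros cannot see them. This is precisely why the paper works at second order: it fixes an integer $i$ depending only on $a$ (via the sign of $\frac{d}{dr}w(1)$ for the limiting two-dimensional eigenfunction $w$) and shows that any critical point with $\beta$ large satisfies $(-1)^{i-1}\lambda''(\beta)>0$, so all late critical points would be nondegenerate local extrema of the same type; two such cannot coexist without an intermediate critical point of the opposite type, whence there is at most one and $\lambda'\neq 0$ beyond it.

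The second problem is the one you flag yourself: the uniform upper bound on $m(\beta)$ (equivalently, excluding negative directions of $Q_{u(\cdot,\alpha(\beta))}$ concentrating at the origin, uniformly in $\beta$) is left open, and at $N=10$ it is not routine, since the potential $a e^{v(\cdot,\beta)}$ is exactly Hardy-critical in the limit and the improved Hardy constant $H$ is attained only by a non-$H^1_0$ profile. The paper's substitute for this compactness is the normalization $w_k=|x|^{\frac{N-2}{2}}v_k'\,/\,\lVert |x|^{\frac{N-2}{2}}v_k'\rVert_{L^2(B_1^2)}$ in the two-dimensional reduction: the pointwise bound $a e^{v_k}-\tfrac{(N-2)^2}{4}|x|^{-2}\le C$ supplied by Propositions \ref{comparison3} and \ref{comparison2} yields a uniform $H^1(B_1^2)$ bound on $w_k$, and the weak limit $w$ with $\lVert w\rVert_{L^2(B_1^2)}=1$ is then tested against $z_k=r^{\frac{N-2}{2}}v_k''-\mu_k r^2$ to force $\mathcal{A}_k+\mu_k\mathcal{B}_k=0$ with $\mathcal{A}_k>0$ and $(-1)^{i-1}\mathcal{B}_k<0$, a contradiction. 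To complete your route you would need both this kind of uniform estimate and an additional transversality argument at $r=1$; as written, the proposal does not close.
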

\begin{proof}
Let $i$ be a natural number depending only on $a$ to be defined later. Then, it suffices to prove the following: there exists $\gamma>0$ such that if it follows $\gamma<\beta$ and $\lambda'(\beta)=0$, then $(-1)^{i-1}\lambda''(\beta)>0$. Suppose the contrary, i.e., we assume that there exist a sequence of radial solutions of \eqref{gelfand} $(\lambda(\beta_k), u(r,\alpha(\beta_k)))$ such that  $\lambda'(\beta_k)=0$, $(-1)^{i-1}\lambda''(\beta_k)\le 0$, and
$\beta_k\to \infty$ as $k\to\infty$.
We set $v_{k}(r):= u(r,\alpha(\beta_k))+\log \lambda(\beta_k)$. We recall that $v_k$ is the solution of \eqref{odev} for $\beta=\beta_k$. 

By a direct calculation, it follows that $v_{k}'$ and $v_{k}''$ satisfy
\begin{equation}
\left\{
\begin{alignedat}{4}
 &-\Delta v_{k}'= a(r)e^{v_{k}}v_{k}', \hspace{14mm}0<r\le 1,\\
&v_{k}'(0)=1, \hspace{1mm} \frac{d}{dr} v_{k}'(0)=0, \hspace{1mm} v_{k}'(1)=0\notag
\end{alignedat}
\right.
\end{equation}
and
\begin{equation}
\left\{
\begin{alignedat}{4}
-\Delta v_{k}''&= a(|x|)e^{v_{k}}({v_{k}'}^2+v_{k}''), \hspace{4mm}&&\text{in $B_1$},\\
v_{k}''&=\frac{\lambda''(\beta_k)}{\lambda(\beta_k)} &&\text{on $\partial B_1$}.\notag
\end{alignedat}
\right.
\end{equation}
We remark that by the assumption \eqref{atarimae} and Proposition \ref{comparison2},
we have $v'_{k}\ge 0$ for $0<r<r_{h}$, where $r_h$ is that in Proposition \ref{comparison3}.
In addition, we remark that 
$v_{k}\to V_{*}$ in $C^2_{\mathrm{loc}}(0,1]$ and $\lambda_{*}:=\lim_{k\to \infty} \lambda(\beta_k)>0$, where $V_{*}$ is that in \eqref{singulareq}. 

We define 
\begin{equation}
\label{spe}
    w_k=\frac{|x|^{\frac{N-2}{2}} v_{k}'}{\lVert |x|^{\frac{N-2}{2}} v_{k}' \rVert_{L^2(B^{2}_{1})}},
\end{equation}
where $B^{2}_{1}$ is the 2-dimensional unit ball. Then, $w_k$ satisfies 
\begin{equation}
\label{odew}
\left\{
\begin{alignedat}{4}
 -\Delta w_{k}&= \left(a(|x|)e^{v_{k}}-\frac{(N-2)^2}{4|x|^2}\right) w_{k}, \hspace{4mm}&&\text{in $B^{2}_1$},\\
w_{k}&=0 &&\text{on $\partial B^{2}_1$}
\end{alignedat}
\right.
\end{equation}
and it satisfies $w_{k}(r)\ge 0$ in $0<r<r_h$. Thanks to Theorem \ref{singularth} and Proposition \ref{comparison3}, it holds
\begin{equation*}
    a(|x|) e^{V_{*}} \le e^{V_{H}+\log a_{H}}=2(N-2)|x|^{-2}+ H \hspace{4mm}\text{in $B_{r_h}$}.
\end{equation*}
As a result, thanks to Proposition \ref{comparison2} and the fact that $v_{k}\to V_{*}$ in $C^2_{\mathrm{loc}}(0,1]$, we get 
\begin{equation*}
  a(|x|)e^{v_{k}}-2(N-2)|x|^{-2}\le C
\end{equation*}
for some $C$ depending only on $a$. We remark that $2(N-2)=\frac{(N-2)^2}{4}$ when $N=10$.
Thus, it follows the energy estimate
\begin{equation*}
    \int_{B^{2}_{1}} |\nabla w_k|^2\,dx=
    \int_{B^{2}_{1}}\left(a(|x|)e^{v_{k}}-\frac{(N-2)^2}{4|x|^2}\right) w^2_{k}\,dx\le C\int_{B^{2}_1} w^2_{k}\,dx=C.
\end{equation*}
Therefore, there exists $w\in H^{1}_{0}(B^2_{1})$ such that $w_k\rightharpoonup w$ in $H^{1}(B^{2}_1)$ by taking a subsequence if necessary. In particular, we get $w_k\to w $ in $L^q(B^{2}_1)$ for any $1\le q<\infty$. Thus $\lVert w\rVert_{L^2(B^2_{1})}=1$ and 
$w$ satisfies
\begin{equation}
\left\{
\begin{alignedat}{4}
 -\Delta w&= \left(a(|x|)e^{V_{*}}-\frac{(N-2)^2}{4|x|^2}\right) w, \hspace{4mm}&&\text{in $B^2_{1}$},\\
w&=0 &&\text{on $\partial B^2_{1}$}.\notag
\end{alignedat}
\right.
\end{equation}
Moreover, $w$ is radially symmetric. Since $V_{*}$ satisfies \eqref{singulareq}, it holds
\begin{equation*}
    K(|x|):=\left(a(|x|)e^{V_{*}}-\frac{(N-2)^2}{4|x|^2}\right)\in C^{0}(B^{2}_1).
\end{equation*} 
Thus, by the elliptic theory \cite{gil} and Lemma \ref{apenlem2}, we verify that there are countable eigenvalues of $-\Delta-K$ and each eigenfunction is in $C^2(\overline{B^{2}_1})$. If $0$ is not an eigenvalue, it holds $w=0$, which contradicts $\lVert w\rVert_{L^2(B^2_{1})}=1$. 

From now on, we deal with the remaining case where $0$ is an eigenvalue. In this case, we can define $i$ as the number such that $(-1)^{i-1}\frac{d}{dr}w(1)<0$ since it holds $\frac{d}{dr}w(1)\neq 0$. We remark that $i$ is depending only on $a$. Moreover, we define

\begin{equation*}
\mu_k:=  \frac{\lambda''(\beta_k)}{\lambda(\beta_k)}\hspace{4mm}\text{and}\hspace{4mm}z_{k}=r^{\frac{N-2}{2}} v_{k}''- \mu_{k} r^2.
\end{equation*}
Then, it holds $(-1)^{i-1}\mu_k\le 0$. Moreover, $z_k\in C^2_{0}(\overline{B^2_{1}})$ satisfies
\begin{equation}
-\Delta z_{k}= \left(a(|x|)e^{v_{k}}-\frac{(N-2)^2}{4|x|^2}\right)(z_{k}+\mu_{k}|x|^2)+a(|x|)|x|^
{\frac{N-2}{2}}e^{v_{k}}{v_{k}'}^2+4\mu_k\hspace{2mm}\text{in $B_1^{2}$}\notag.
\end{equation}

Thanks to the above equation and \eqref{odew}, we get 
\begin{align*}
    0&=\int_{B_{1}^2} \left(-\Delta w_k - \left(a(|x|)e^{v_{k}}-\frac{(N-2)^2}{4|x|^2}\right)w_k\right)z_k\,dx\\
    &=\int_{B_{1}^2} \left(-\Delta z_k - \left(a(|x|)e^{v_{k}}-\frac{(N-2)^2}{4|x|^2}\right)z_k\right)w_k\,dx= \mathcal{A}_k+ \mu_k \mathcal{B}_k,
\end{align*}
where
\begin{align*}
    &\mathcal{A}_k:= \int_{B^2_{1}}a(|x|)|x|^{\frac{N-2}{2}}e^{v_{k}}{v_{k}'}^2 w_k\,dx,\\
&\mathcal{B}_k:=\int_{B^2_{1}}\left(\left(a(|x|)e^{v_{k}}-\frac{(N-2)^2}{4|x|^2}\right)|x|^2+\Delta |x|^2\right)w_k\,dx.
\end{align*}
We claim that there exists $k_0$ such that 
$A_k>0$, $(-1)^{i-1}B_k<0$ for all $k>K_0$, which contradicts the above equality.
Indeed, we recall that $w_k\to w $ in $L^q(B^{2}_1)$ for any $1\le q<\infty$ and $v_k\to V_{*}$ in $C^2_{\mathrm{loc}}(0,1]$. Thanks to the above notations and Fatou's Lemma, it holds
\begin{align*}
    \liminf_{k\to\infty}\frac{\mathcal{A}_k}{\lVert |x|^{\frac{N-2}{2}}v_k'\rVert_{L^2(B^2_{1})}^3}&=\liminf_{k\to\infty}\int_{B^2_{1}}a(|x|)|x|^{\frac{2-N}{2}}e^{v_{k}}w^3_{k}\,dx\\
    &\ge \liminf_{k\to\infty} \int_{B^2_{r_0}}a(|x|)|x|^{\frac{2-N}{2}}e^{v_{k}}w^3_{k}\,dx-C\\
    &=\infty
\end{align*}
and
\begin{align*}
\lim_{k\to\infty}\mathcal{B}_k&=\int_{B^2_{1}} \left(K(|x|)|x|^2+\Delta |x|^2\right)w\,dx\\
&=\int_{B^2_{1}} K(|x|)w\,dx+\int_{B^2_{1}} \left(K(|x|)(|x|^2-1)+\Delta (|x|^2-1)\right)w\,dx\\
&=\int_{B^2_{1}} -\Delta w\,dx=2\pi \frac{d}{dr}w(1).
\end{align*}
Thus, we get the claim.  
\end{proof}

\begin{proof}[Proof of Theorem \ref{Mainthm}.]
We first assume that $a(r)$ satisfies $(A)$ and the assumption of $(i)$. In this case, thanks to Theorem \ref{singularth} and Corollary \ref{cor}, it holds
\begin{equation*}
    V_{*}+ \log a \le V_{H} + \log a_{H},
\end{equation*}
 where $a_H$ is that in Theorem \ref{singularthm},
$V_{*}$ is the solution of \eqref{singulareq}, and $V_H$ is the solution of \eqref{singulareq} for $a=a_H$.
We recall $2(N-2)=\frac{(N-2)^2}{4}$ and $a_{H}e^{V_H}=2(N-2)|x|^{-2}+H$. Thus, for any $\xi\in H^1_{0}(B_1)$, we have
\begin{align}
  Q_{U_{*}}(\xi)&=\int_{B_1} \left(|\nabla \xi|^2-\lambda_{*}ae^{U_{*}}\xi^2\right)\,dx
    =\int_{B_1}\left(|\nabla \xi|^2-ae^{V_{*}}\xi^2\right)\,dx\notag\\
    &\ge \int_{B_1}|\nabla \xi|^2\,dx- 2(N-2)\int_{B_1} \frac{\xi^2}{|x|^2}\,dx- H\int_{B_1} \xi^2\,dx\notag\\
    &=\int_{B_1}|\nabla \xi|^2\,dx- \frac{(N-2)^2}{4}\int_{B_1} \frac{\xi^2}{|x|^2}\,dx- H\int_{B_1} \xi^2\,dx.\notag
    \end{align}
By Proposition \ref{improved hardy}, $U_{*}$ is stable. Thanks to Proposition \ref{brezis V} and Theorem \ref{singularth}, we verify that the bifurcation diagram is of Type II.

Then we assume that $a(r)$ satisfies $(A)$ and the assumption of $(ii)$. We divide the proof into the following three steps.
\vspace{5pt}

\noindent
\textbf{Step 1.} \textit{$u^*$ is bounded.}

We recall the following notation: $u^*$ is the extremal solution for $\lambda=\lambda^*$. Suppose the contrary, i.e., we assume that $\lVert u^{*}\rVert_{ L^{\infty}(B_1)}=\infty$. Since $\lambda^*>0$, it holds $\lVert u^{*}\rVert_{ L^{\infty}(B_1)}+\log \lambda^*=\infty$. Then, thanks to Theorem \ref{singularth} and Proposition \ref{comparison1}, there exists a nondecreasing function $\varepsilon:[0,1]\to [0,1]$ which satisfies \eqref{seisitu epsilon} such that 
\begin{equation*}
    V_{*}+\log a\ge -2\log |x|+\log 2(N-2)+\log \left(1+\frac{H+\varepsilon(|x|)}{2(N-2)}|x|^2\right). 
\end{equation*}
Thus, for any $\xi\in H^1_{0}(B_1)$, we have
\begin{align}
  Q_{U_{*}}(\xi)&=\int_{B_1}\left(|\nabla \xi|^2-ae^{V_{*}}\xi^2\right)\,dx\notag\\
    &\le\int_{B_1}|\nabla \xi|^2\,dx- \frac{(N-2)^2}{4}\int_{B_1} \frac{\xi^2}{|x|^2}\,dx- \int_{B_1}(H+\varepsilon(|x|)) \xi^2\,dx.\notag
    \end{align}
By Proposition \ref{improvedhardy2}, we know that $U_{*}$ is unstable. Thus, it follows by Proposition \ref{brezis V} and Theorem \ref{singularth} that $u^* \in L^{\infty}(B_1)$.

\vspace{5pt}

\noindent
\textbf{Step 2.} \textit{The bifurcation curve turns at $\lambda=\lambda^*$}.

We set $\beta_0 := \lVert u^{*} \rVert_{L^\infty(B_1)}+\log \lambda^*$, We recall that equation \eqref{gelfand} has no weak solution for $\lambda>\lambda^{*}=\lambda(\beta_0)$. By the implicit function theorem, it holds $\lambda'(\beta_0)=0$. We recall that $\alpha(\beta)=\beta-\log \lambda(\beta)$. Thus it holds $\alpha'(\beta_0)=1$. Therefore, it suffices to prove $\lambda''(\beta_0)<0$.

We denote $v=v(r,\beta_0)$ and $\mu=\lambda''(\beta_0)/\lambda(\beta_0)$. Then, by a direct calculation, it follows that $v'$ and $v''$ satisfy
\begin{equation}
\label{eqv'}
\left\{
\begin{alignedat}{4}
 &-\Delta v'= a(r)e^{v}v', \hspace{14mm}0<r\le 1,\notag\\
&v'(0)=1, \hspace{1mm} \frac{d}{dr} v'(0)=0, \hspace{1mm} v'(1)=0
\end{alignedat}
\right.
\end{equation}
and
\begin{equation}
\left\{
\begin{alignedat}{4}
 -\Delta v'' &= ae^{v}({v'}^2+v'') \hspace{4mm}&&\text{in $B_1$},\notag\\
v''&=\mu &&\text{on $\partial B_1$}.\notag
\end{alignedat}
\right.
\end{equation}
Thus, by the Green's inequality, it holds

\begin{align*}
    0&=\int_{B_{1}} (-\Delta v' - a(|x|)e^{v}v')v''\,dx\\
    &=\int_{B_1} (-\Delta v'' -a(|x|)e^{v}v'')v'\,dx-
     \mu\int_{\partial B_1} v'\,d\mathcal{H}^{N-1}\notag\\
  &=\int_{B_1} a(|x|)e^v {v'}^3\,dx-
    \mu Nw_{N}\frac{d}{dr}v'(1)\notag,
\end{align*}
where $w_N$ is the measure of the $N$-dimensional unit ball. We claim that $v'>0$ in $B_1$ and $\frac{d}{dr}v'(1)<0$. Indeed, since $u^*$ is stable, $v$ is stable. It means that $-\Delta- ae^v$ is nonnegative. Thus, we verify that $0$ is the first eigenvalue of $-\Delta-ae^v$ and $v'$ is a first eigenfunction of this operator. Moreover, by Proposition \ref{comparison2}, we have $v'\ge 0$ in $B_{r_h}$. Thus, we get the claim. In particular, we have
\begin{equation*}
    \int_{B_1} a(|x|)e^v {v'}^3\,dx>0.
\end{equation*}
Thus, we get the result.
\vspace{5pt}

\noindent

\textbf{Step 3.} \textit{Conclusion}.

We remark that there exists $h>0$ depending only on $a$ such that \eqref{atarimae} holds. Thus, thanks to Proposition \ref{mainprop}, we know that every solution is nondegenerate if $\beta$ is sufficiently large. Moreover, thanks to Theorem \ref{diagramth}, the bifurcation curve is analytic. Therefore, we verify that the bifurcation diagram is of Type III. 
\end{proof}

\section*{Acknowledgments}
The author would like to thank Associate Professor Michiaki Onodera and Professor Yasuhito Miyamoto, for their valuable advice.
This work was supported by JSPS KAKENHI Grant Number 23KJ0949.
\section{Appendix}
In this section, we introduce some basic properties.  
We first define the symmetric rearrangement. Let $f:\mathbb{R}\to \mathbb{R}$ be a measurable function which decays to zero at infinity. Then, we define the radially symmetric function  
\begin{equation*}
f^*(|x|)= \sup \{t>0:|\{y\in \mathbb{R}^N:|f(y)|>t\}|>w_{N}|x|^N\}
\end{equation*}
as the symmetric rearrangement of $f$, where $|G|$ is the Lebesgue measure of a set $G\in \mathbb{R}^N$ and $w_N$ is the volume of the $N$-dimensional unit ball. Then, it is well-known that the following lemma holds.
\begin{lemma}[see \cite{Lieb,Kaw}]
\label{apenlem1}
    Let $N\ge 3$ and $f\in H^1_{0}(\mathbb{R}^N)$. Then, the symmetric rearrangement $f^*$ satisfies
\begin{equation*}
    \lVert f^* \rVert_{L^2(\mathbb{R}^N)}=\lVert f\rVert_{L^2(\mathbb{R}^N)},\hspace{4mm} \lVert \nabla f^* \rVert_{L^2(\mathbb{R}^N)}\le \lVert f\rVert_{L^2(\mathbb{R}^N)},
\end{equation*}
and
\begin{equation*}
\int_{\mathbb{R}^{N}}\frac{|f^*|^2}{|x|^2}\,dx\ge \int_{\mathbb{R}^{N}} \frac{|f|^2}{|x|^2}\,dx.
\end{equation*}
\end{lemma}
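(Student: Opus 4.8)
The plan is to obtain all three assertions from the classical theory of symmetric decreasing rearrangement, as developed in \cite{Lieb, Kaw}. Since $N \ge 3$ and $f \in H^1_0(\mathbb{R}^N)$, the Sobolev embedding gives $f \in L^{2^*}(\mathbb{R}^N)$, so $|f|$ decays to zero at infinity and $f^*$ is well defined through its distribution function. The first identity is then immediate from equimeasurability: by construction $|\{|f| > t\}| = |\{f^* > t\}|$ for every $t > 0$, and since $s \mapsto s^2$ is increasing we have $(|f|^2)^* = (f^*)^2$. Applying the layer-cake (Cavalieri) formula yields $\|f^*\|_{L^2}^2 = \int_0^\infty |\{(f^*)^2 > s\}| \, ds = \int_0^\infty |\{|f|^2 > s\}| \, ds = \|f\|_{L^2}^2$.

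For the Dirichlet-energy bound I would invoke the P\'olya--Szeg\H{o} inequality $\|\nabla f^*\|_{L^2} \le \|\nabla f\|_{L^2}$. The standard proof first treats smooth, compactly supported $f$ by combining the coarea formula with the isoperimetric inequality: almost every level set $\{|f| > t\}$ has perimeter at least that of the ball of equal volume, whose boundary is the corresponding level set of $f^*$, and integrating this comparison over $t$ gives the estimate. One then extends to general $f \in H^1_0$ by density, using the lower semicontinuity of the Dirichlet integral and the $L^2$-continuity of the rearrangement map. This is the substantive ingredient and the one I expect to be the main obstacle; since it is precisely the content of \cite{Lieb, Kaw}, I would quote it rather than reproduce the isoperimetric argument.

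For the weighted inequality I would exploit that the weight is itself radial and decreasing. Indeed, $|x|^{-2}$ is a nonnegative radial function, strictly decreasing in $|x|$, with level sets $\{|x|^{-2} > t\} = \{|x| < t^{-1/2}\}$ of finite measure; hence it coincides with its own symmetric decreasing rearrangement. Setting $g = |f|^2$ and $h = |x|^{-2}$, the Hardy--Littlewood rearrangement inequality $\int_{\mathbb{R}^N} g h \, dx \le \int_{\mathbb{R}^N} g^* h^* \, dx$, together with $g^* = (f^*)^2$ and $h^* = h$, gives exactly $\int_{\mathbb{R}^N} |f|^2 / |x|^2 \, dx \le \int_{\mathbb{R}^N} |f^*|^2 / |x|^2 \, dx$. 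Both integrals may a priori be infinite, but the inequality is preserved in that case, so no integrability hypothesis beyond $f \in H^1_0$ is required.

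In summary, the equimeasurability and Hardy--Littlewood steps are routine once the weight is recognized as its own rearrangement, and the only genuine difficulty lies in the P\'olya--Szeg\H{o} estimate, which rests on the isoperimetric inequality and is exactly what the cited references supply.
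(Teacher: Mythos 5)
Your proposal is correct: the three assertions follow exactly from equimeasurability, the P\'olya--Szeg\H{o} inequality, and the Hardy--Littlewood rearrangement inequality applied with the weight $|x|^{-2}$, which is its own symmetric decreasing rearrangement. The paper gives no proof at all --- it states the lemma as well known and simply cites the same classical references you invoke --- so your outline supplies precisely the standard argument the paper delegates to \cite{Lieb,Kaw}.
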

Then, we introduce the following lemma, which is used in Section \ref{separate sec}.
\begin{lemma}
\label{rough lemma}
Assume that $N\ge 2$ and let $R>0$. Moreover, let $w\in C^2(B_{R})$ be a radially symmetric superharmonic function. Then, it follows $w'(r)\le 0$ in $(0,R)$.
\end{lemma}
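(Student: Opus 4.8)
The plan is to reduce the statement to an elementary monotonicity argument using the one-dimensional form of the radial Laplacian. Following the convention fixed at the start of Section \ref{basic}, we identify $\Delta$ with $\frac{d^2}{dr^2}+\frac{N-1}{r}\frac{d}{dr}$, so that the superharmonicity hypothesis $-\Delta w\ge 0$ becomes
\[
w''(r)+\frac{N-1}{r}\,w'(r)\le 0\qquad\text{for }0<r<R.
\]

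First I would multiply this inequality by the integrating factor $r^{N-1}>0$ and observe that the left-hand side is an exact derivative:
\[
\frac{d}{dr}\bigl(r^{N-1}w'(r)\bigr)=r^{N-1}\!\left(w''(r)+\frac{N-1}{r}\,w'(r)\right)\le 0 .
\]
Consequently the map $r\mapsto r^{N-1}w'(r)$ is nonincreasing on $(0,R)$. Next I would identify its limit at the origin: since $w$ is radially symmetric and $w\in C^2(B_R)$, its gradient at the center vanishes, hence $w'(0)=0$ and therefore $\lim_{r\to 0^+}r^{N-1}w'(r)=0$ (here $N\ge 2$ is exactly what guarantees $r^{N-1}\to 0$). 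Combining this with the monotonicity just obtained gives $r^{N-1}w'(r)\le 0$ for every $0<r<R$, and dividing by $r^{N-1}>0$ yields $w'(r)\le 0$, which is the assertion.

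The only point requiring a word of justification—and really the sole obstacle—is the vanishing of $w'$ at the origin. This is immediate from the fact that a $C^1$ radially symmetric function on a ball must have zero gradient at its center, for otherwise the radial derivative $w'(r)$ would fail to be continuous (one-sided limits from opposite radii would differ in sign). With this in hand the argument is complete.
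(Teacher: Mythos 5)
Your proof is correct and follows essentially the same route as the paper: both rest on the observation that $r^{N-1}w'(r)$ is nonincreasing and tends to $0$ at the origin, the only difference being that the paper phrases the final step as a contradiction while you argue directly.
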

\begin{proof}
Since $w$ is superharmonic, it holds $(r^{N-1}w'(r))'\le 0$. Suppose the contrary, i.e., we assume that $w'(t)>0$ for some $R>t>0$. Then, we have $t^{N-1}w'(t)\le r^{N-1}w'(r)$ for all $0<r<t$, Since $w\in C^2(B_R)$, we get 
$t^{N-1}w'(t)\le 0$ by letting $r\to 0$. It is a contradiction. Thus, it holds $w'\le 0$.
\end{proof}
Finally, we consider the regularity of radial solution for  
\begin{equation}
\label{apeneq}
-\Delta u = K(|x|) u \hspace{4mm}\text{in $B_1$.} 
\end{equation}

\begin{lemma}
\label{apenlem2}
Let $N\ge 2$ and $K\in C^0(\overline{B_1})$. We assume that  $u\in H^1_{0}(B_1)$ is a radial solution of \eqref{apeneq} in the weak sense. Then, $u\in C^2_{0}(\overline{B_1}).$
\end{lemma}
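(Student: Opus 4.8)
The plan is to first upgrade the weak solution to a classical one by a standard elliptic bootstrap, and then to exploit the radial structure to obtain $C^2$-regularity through the associated second-order ODE; the only genuinely delicate point will be the regularity at the origin, where the mere continuity of $K$ prevents us from invoking Schauder theory directly.

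First I would establish that $u\in L^\infty(B_1)$. Since $K\in C^0(\overline{B_1})\subset L^\infty(B_1)$ and $u\in H^1_0(B_1)$, the right-hand side $Ku$ lies in $L^q(B_1)$ for the exponents supplied by the Sobolev embedding of $H^1_0$. Applying the Calderón--Zygmund $L^q$-estimates for $-\Delta$ (see \cite{gil}) gives $u\in W^{2,q}(B_1)$, and the standard iteration---each step improving the integrability exponent by $2/N$---reaches an exponent exceeding $N/2$ after finitely many steps, whence $u\in L^\infty(B_1)$. When $N=2$ one uses instead that $H^1_0\hookrightarrow L^q$ for every $q<\infty$, so $u\in W^{2,q}$ for every $q<\infty$. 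Feeding $Ku\in L^\infty\subset L^p$ back once more yields $u\in W^{2,p}(B_1)$ for all $p<\infty$, hence $u\in C^{1,\alpha}(\overline{B_1})$ for every $\alpha\in(0,1)$; in particular $u$ is continuous and, being radial and $C^1$, satisfies $u'(0)=0$.

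Next I would pass to the radial ODE. Testing the weak formulation against $\psi(|x|)$ with $\psi\in C^\infty_c((0,1))$ shows that $(r^{N-1}u'(r))'=-r^{N-1}K(r)u(r)$ in $\mathcal{D}'((0,1))$. Since the right-hand side is now continuous, I can integrate it: using that $u\in C^1$ forces $r^{N-1}u'(r)\to 0$ as $r\to 0$, the constant of integration vanishes and
\begin{equation*}
u'(r)=-r^{1-N}\int_0^r s^{N-1}K(s)u(s)\,ds,\qquad 0<r\le 1.
\end{equation*}
Differentiating this expression, which is licit because the integrand is continuous, gives
\begin{equation*}
u''(r)=-\frac{N-1}{r}u'(r)-K(r)u(r),
\end{equation*}
a continuous function on $(0,1]$. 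Hence $u\in C^2((0,1])$, and together with $u=0$ on $\partial B_1$ (from $u\in H^1_0\cap C^0$) this already gives $C^2$-regularity up to the boundary away from the centre.

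The main obstacle is the regularity at $x=0$, and this is where the radial hypothesis is indispensable, since $-\Delta u=f$ with $f$ only continuous need not be $C^2$. From the integral formula and $K(s)u(s)\to K(0)u(0)$ I would read off the asymptotics $u'(r)/r\to -K(0)u(0)/N$ and, from the ODE, $u''(r)\to -K(0)u(0)/N$ as $r\to 0$; thus $u''$ extends continuously to $r=0$ and $u\in C^2([0,1])$ as a function of $r$. To conclude that $x\mapsto u(|x|)$ is $C^2(\overline{B_1})$ as a function of $x$ I would compute
\begin{equation*}
\partial_i\partial_j u=\frac{x_ix_j}{r^2}\Bigl(u''(r)-\frac{u'(r)}{r}\Bigr)+\delta_{ij}\,\frac{u'(r)}{r},
\end{equation*}
and observe that the direction-dependent coefficient $u''(r)-u'(r)/r$ tends to $0$ as $r\to 0$ by the two limits just computed. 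Therefore the Hessian has the direction-independent limit $-\tfrac{K(0)u(0)}{N}\delta_{ij}$, the second derivatives are continuous across the origin, and $u\in C^2_0(\overline{B_1})$, as claimed.
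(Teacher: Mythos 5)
Your proof is correct and follows essentially the same route as the paper: elliptic regularity upgrades the weak solution to $C^1_0(\overline{B_1})$, and then the radial integral representation $u'(r)=-r^{1-N}\int_0^r s^{N-1}K(s)u(s)\,ds$ together with the ODE yields $C^2$-regularity, including at the origin. The paper compresses the last part into the phrase ``by using an ODE technique,'' and your computation of the limits $u'(r)/r\to -K(0)u(0)/N$, $u''(r)\to -K(0)u(0)/N$ and of the Hessian is exactly the missing detail, carried out correctly.
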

\begin{proof}
    By the elliptic regularity theory (see \cite{gil}), we have $u\in C^1_{0}(\overline{B_1})$. Since $u$ is radially-symmetric, it holds $\frac{d}{dr} u(0)=0$.
    Moreover, for any $0<r_0<r\le 1$, we have
 \begin{equation*}
    u(r)=u(r_0)+\int^{r}_{r_0}\left(
         \left(\frac{r_0}{s}\right)^{N-1} \frac{du}{ds}(r_0)-
         \int^{s}_{r_0}\left(\frac{t}{s}\right)^{N-1}K(t)u\,dt\right)\,ds.
     \end{equation*}
By letting $r_0\to 0$, we get
\begin{equation*}
         u(r)=u(0)-\int^{r}_{0}\,ds
         \int^{s}_{0}\left(\frac{t}{s}\right)^{N-1}K(t)u\,dt
\end{equation*}
for all $0\le r\le 1$. By using an ODE technique, we get $u\in C^2_{0}(\overline{B_1})$.
\end{proof}

\bibliographystyle{plain}
\bibliography{bifurcation}

\begin{thebibliography}{10}

\bibitem{A2016}
A.~Aghajani.
\newblock New a priori estimates for semistable solutions of semilinear
  elliptic equations.
\newblock {\em Potential Anal.}, 44(4):729--744, 2016.

\bibitem{AC}
A.~Aghajani, C.~Cowan, and A.~Moameni.
\newblock The {G}elfand problem on annular domains of double revolution with
  monotonicity.
\newblock {\em Proc. Amer. Math. Soc.}, 150(8):3457--3470, 2022.

\bibitem{Ali}
Nicholas~D. Alikakos and Peter~W. Bates.
\newblock On the singular limit in a phase field model of phase transitions.
\newblock {\em Ann. Inst. H. Poincar\'{e} Anal. Non Lin\'{e}aire},
  5(2):141--178, 1988.

\bibitem{Bae}
Soohyun Bae.
\newblock Entire solutions with asymptotic self-similarity for elliptic
  equations with exponential nonlinearity.
\newblock {\em J. Math. Anal. Appl.}, 428(2):1085--1116, 2015.

\bibitem{Baeni}
Soohyun Bae and Wei-Ming Ni.
\newblock Existence and infinite multiplicity for an inhomogeneous semilinear
  elliptic equation on {$\mathbf{ R}^n$}.
\newblock {\em Math. Ann.}, 320(1):191--210, 2001.

\bibitem{B}
Ha\"{\i}m Brezis, Thierry Cazenave, Yvan Martel, and Arthur Ramiandrisoa.
\newblock Blow up for {$u_t-\Delta u=g(u)$} revisited.
\newblock {\em Adv. Differential Equations}, 1(1):73--90, 1996.

\bibitem{Br}
Haim Brezis and Juan~Luis V\'{a}zquez.
\newblock Blow-up solutions of some nonlinear elliptic problems.
\newblock {\em Rev. Mat. Univ. Complut. Madrid}, 10(2):443--469, 1997.

\bibitem{Nor}
C.~Budd and J.~Norbury.
\newblock Semilinear elliptic equations and supercritical growth.
\newblock {\em J. Differential Equations}, 68(2):169--197, 1987.

\bibitem{cabre2010}
Xavier Cabr\'{e}.
\newblock Regularity of minimizers of semilinear elliptic problems up to
  dimension 4.
\newblock {\em Comm. Pure Appl. Math.}, 63(10):1362--1380, 2010.

\bibitem{cabre2017}
Xavier Cabr\'{e}.
\newblock Boundedness of stable solutions to semilinear elliptic equations: a
  survey.
\newblock {\em Adv. Nonlinear Stud.}, 17(2):355--368, 2017.

\bibitem{cabre2019}
Xavier Cabr\'{e}.
\newblock A new proof of the boundedness results for stable solutions to
  semilinear elliptic equations.
\newblock {\em Discrete Contin. Dyn. Syst.}, 39(12):7249--7264, 2019.

\bibitem{cabrecapella}
Xavier Cabr\'{e} and Antonio Capella.
\newblock Regularity of radial minimizers and extremal solutions of semilinear
  elliptic equations.
\newblock {\em J. Funct. Anal.}, 238(2):709--733, 2006.

\bibitem{CCS}
Xavier Cabr\'{e}, Antonio Capella, and Manel Sanch\'{o}n.
\newblock Regularity of radial minimizers of reaction equations involving the
  {$p$}-{L}aplacian.
\newblock {\em Calc. Var. Partial Differential Equations}, 34(4):475--494,
  2009.

\bibitem{CFRS}
Xavier Cabr\'{e}, Alessio Figalli, Xavier Ros-Oton, and Joaquim Serra.
\newblock Stable solutions to semilinear elliptic equations are smooth up to
  dimension 9.
\newblock {\em Acta Math.}, 224(2):187--252, 2020.

\bibitem{CR-O}
Xavier Cabr\'{e} and Xavier Ros-Oton.
\newblock Regularity of stable solutions up to dimension 7 in domains of double
  revolution.
\newblock {\em Comm. Partial Differential Equations}, 38(1):135--154, 2013.

\bibitem{chend}
Wenjing Chen and Juan D\'{a}vila.
\newblock Resonance phenomenon for a {G}elfand-type problem.
\newblock {\em Nonlinear Anal.}, 89:299--321, 2013.

\bibitem{chen}
Jann-Long Chern, Zhi-You Chen, Jhih-He Chen, and Yong-Li Tang.
\newblock On the classification of standing wave solutions for the
  {S}chr\"{o}dinger equation.
\newblock {\em Comm. Partial Differential Equations}, 35(2):275--301, 2010.

\bibitem{CR}
Michael~G. Crandall and Paul~H. Rabinowitz.
\newblock Some continuation and variational methods for positive solutions of
  nonlinear elliptic eigenvalue problems.
\newblock {\em Arch. Rational Mech. Anal.}, 58(3):207--218, 1975.

\bibitem{Davdup}
J.~D\'{a}vila and L.~Dupaigne.
\newblock Perturbing singular solutions of the {G}elfand problem.
\newblock {\em Commun. Contemp. Math.}, 9(5):639--680, 2007.

\bibitem{Flore}
Jean Dolbeault and Isabel Flores.
\newblock Geometry of phase space and solutions of semilinear elliptic
  equations in a ball.
\newblock {\em Trans. Amer. Math. Soc.}, 359(9):4073--4087, 2007.

\bibitem{Dup}
Louis Dupaigne.
\newblock {\em Stable solutions of elliptic partial differential equations},
  volume 143 of {\em Chapman \& Hall/CRC Monographs and Surveys in Pure and
  Applied Mathematics}.
\newblock Chapman \& Hall/CRC, Boca Raton, FL, 2011.

\bibitem{Marius}
Marius Ghergu and Olivier Goubet.
\newblock Singular solutions of elliptic equations with iterated exponentials.
\newblock {\em J. Geom. Anal.}, 30(2):1755--1773, 2020.

\bibitem{Gidas}
B.~Gidas, Wei~Ming Ni, and L.~Nirenberg.
\newblock Symmetry and related properties via the maximum principle.
\newblock {\em Comm. Math. Phys.}, 68(3):209--243, 1979.

\bibitem{gil}
David Gilbarg and Neil~S. Trudinger.
\newblock {\em Elliptic partial differential equations of second order}.
\newblock Classics in Mathematics. Springer-Verlag, Berlin, 2001.
\newblock Reprint of the 1998 edition.

\bibitem{Gui}
Changfeng Gui.
\newblock On positive entire solutions of the elliptic equation {$\Delta
  u+K(x)u^p=0$} and its applications to {R}iemannian geometry.
\newblock {\em Proc. Roy. Soc. Edinburgh Sect. A}, 126(2):225--237, 1996.

\bibitem{Guini}
Changfeng Gui, Wei-Ming Ni, and Xuefeng Wang.
\newblock On the stability and instability of positive steady states of a
  semilinear heat equation in {$\mathbf{ R}^n$}.
\newblock {\em Comm. Pure Appl. Math.}, 45(9):1153--1181, 1992.

\bibitem{Guowei}
Zongming Guo and Juncheng Wei.
\newblock Global solution branch and {M}orse index estimates of a semilinear
  elliptic equation with super-critical exponent.
\newblock {\em Trans. Amer. Math. Soc.}, 363(9):4777--4799, 2011.

\bibitem{JL}
D.~D. Joseph and T.~S. Lundgren.
\newblock Quasilinear {D}irichlet problems driven by positive sources.
\newblock {\em Arch. Rational Mech. Anal.}, 49:241--269, 1972/73.

\bibitem{Kaw}
Bernhard Kawohl.
\newblock {\em Rearrangements and convexity of level sets in {PDE}}, volume
  1150 of {\em Lecture Notes in Mathematics}.
\newblock Springer-Verlag, Berlin, 1985.

\bibitem{KiWei}
Hiroaki Kikuchi and Juncheng Wei.
\newblock A bifurcation diagram of solutions to an elliptic equation with
  exponential nonlinearity in higher dimensions.
\newblock {\em Proc. Roy. Soc. Edinburgh Sect. A}, 148(1):101--122, 2018.

\bibitem{korman}
Philip Korman.
\newblock Solution curves for semilinear equations on a ball.
\newblock {\em Proc. Amer. Math. Soc.}, 125(7):1997--2005, 1997.

\bibitem{Luo}
Baishun Lai and Qing Luo.
\newblock Uniqueness of singular solution of semilinear elliptic equation.
\newblock {\em Proc. Indian Acad. Sci. Math. Sci.}, 120(5):583--591, 2010.

\bibitem{Lieb}
Elliott~H. Lieb and Michael Loss.
\newblock {\em Analysis}, volume~14 of {\em Graduate Studies in Mathematics}.
\newblock American Mathematical Society, Providence, RI, second edition, 2001.

\bibitem{Lin}
Song~Sun Lin.
\newblock Positive singular solutions for semilinear elliptic equations with
  supercritical growth.
\newblock {\em J. Differential Equations}, 114(1):57--76, 1994.

\bibitem{Merle}
F.~Merle and L.~A. Peletier.
\newblock Positive solutions of elliptic equations involving supercritical
  growth.
\newblock {\em Proc. Roy. Soc. Edinburgh Sect. A}, 118(1-2):49--62, 1991.

\bibitem{Mi2014}
Yasuhito Miyamoto.
\newblock Structure of the positive solutions for supercritical elliptic
  equations in a ball.
\newblock {\em J. Math. Pures Appl. (9)}, 102(4):672--701, 2014.

\bibitem{Mi2015}
Yasuhito Miyamoto.
\newblock Classification of bifurcation diagrams for elliptic equations with
  exponential growth in a ball.
\newblock {\em Ann. Mat. Pura Appl. (4)}, 194(4):931--952, 2015.

\bibitem{Mi2018}
Yasuhito Miyamoto.
\newblock A limit equation and bifurcation diagrams of semilinear elliptic
  equations with general supercritical growth.
\newblock {\em J. Differential Equations}, 264(4):2684--2707, 2018.

\bibitem{Mi2020}
Yasuhito Miyamoto and Y\={u}ki Naito.
\newblock Fundamental properties and asymptotic shapes of the singular and
  classical radial solutions for supercritical semilinear elliptic equations.
\newblock {\em NoDEA Nonlinear Differential Equations Appl.}, 27(6):Paper No.
  52, 25, 2020.

\bibitem{Mi2023}
Yasuhito Miyamoto and Y\={u}ki Naito.
\newblock Singular solutions for semilinear elliptic equations with general
  supercritical growth.
\newblock {\em Ann. Mat. Pura Appl. (4)}, 202(1):341--366, 2023.

\bibitem{Ned}
Gueorgui Nedev.
\newblock Regularity of the extremal solution of semilinear elliptic equations.
\newblock {\em C. R. Acad. Sci. Paris S\'{e}r. I Math.}, 330(11):997--1002,
  2000.

\bibitem{Niserrin}
Wei-Ming Ni and James Serrin.
\newblock Nonexistence theorems for singular solutions of quasilinear partial
  differential equations.
\newblock {\em Comm. Pure Appl. Math.}, 39(3):379--399, 1986.

\bibitem{sansan}
Manel Sanch\'{o}n.
\newblock Boundedness of the extremal solution of some {$p$}-{L}aplacian
  problems.
\newblock {\em Nonlinear Anal.}, 67(1):281--294, 2007.

\bibitem{Vil}
Salvador Villegas.
\newblock Boundedness of extremal solutions in dimension 4.
\newblock {\em Adv. Math.}, 235:126--133, 2013.

\bibitem{Ye}
Dong Ye and Feng Zhou.
\newblock Boundedness of the extremal solution for semilinear elliptic
  problems.
\newblock {\em Commun. Contemp. Math.}, 4(3):547--558, 2002.

\end{thebibliography}

\end{document}